\let\origsection=\section \def\section{\@ifstar{\origsection*}{\mysection}} 
\def\mysection{\@startsection{section}{1}\z@{.7\linespacing\@plus\linespacing}{.5\linespacing}{\normalfont\scshape\centering\S}}
\colorlet{darkishRed}{red!60!black}
\colorlet{darkishBlue}{blue!60!black}
\colorlet{darkishGreen}{green!50!black}
\colorlet{lightishGreen}{green!70!black}
\crefname{mainresult}{Theorem}{Theorems}
\renewcommand{\PrintDOI}[1]{\doi{#1}}
\let\setminus=\smallsetminus
\newcommand{\COMMENT}[1]{{}}
\let\setminus=\smallsetminus
\renewcommand{\leq}{\leqslant}
\renewcommand{\geq}{\geqslant}
\renewcommand{\le}{\leq}
\newcommand{\cleq}{\preccurlyeq}
\let\rho=\varrho
\let\phi=\varphi
\newcommand{ \R } { \mathbb{R} }
\newcommand{ \N } { \mathbb{N} }
\newcommand{\defn}[1]{{\color{darkishRed}{\emph{#1}}}}
\newcommand{\defnwe}[1]{{\color{darkishRed}{#1}}}
\newcommand{\red}[1]{{\color{red}{#1}}}
\newcommand{\blue}[1]{{\color{blue}{#1}}}
\def\calCommandfactory#1{%
   \expandafter\def\csname c#1\endcsname{\mathcal{#1}}}
\def\frakCommandfactory#1{%
   \expandafter\def\csname frak#1\endcsname{\mathfrak{#1}}}
\newcounter{ctr}
  \edef\X{\@Alph\c@ctr}
  \edef\Y{\@alph\c@ctr}
\newtheorem{theorem}{Theorem}[section] 
\newtheorem{proposition}[theorem]{Proposition}
\newtheorem{corollary}[theorem]{Corollary}
\newtheorem{lemma}[theorem]{Lemma}
\newtheorem{observation}[theorem]{Observation}
\newtheorem{mainresult}{Theorem}
\newtheorem{mainconjecture}[mainresult]{Conjecture}
\newenvironment{customthm}[1]
  {\innercustomthm}
  {\endinnercustomthm}
\theoremstyle{definition}
\newtheorem{example}[theorem]{Example}
\newtheorem{construction}[theorem]{Construction}
\theoremstyle{remark}
\newcommand{\pdfOrNot}[2]{\ifbool{pdfBool}{{#1}}{{#2}}}
\newcommand{\arXivOrNot}[2]{\ifbool{arXiv}{{#1}}{{#2}}}
\newcommand{\change}[2]{\ifbool{TrackBool}{\blue{#1} \red{#2}}{{#1}}}
\DeclareMathOperator{\orw}{oradw}
\DeclareMathOperator{\rs}{rads}
\DeclareMathOperator{\irs}{irads}
\newcommand{\dist}{d}
\DeclareMathOperator{\rad}{rad}
\DeclareMathOperator{\Forb}{Forb}
\newcommand{\Forbminor}{\Forb_\cleq}
\newcommand{\td}{tree-decom\-pos\-ition}
\newcommand{\gd}{graph-decom\-pos\-ition}
\newcommand{\Hd}[1]{#1-decom\-pos\-ition}
\newcounter{mylabelcounter}
\newcommand{\labelText}[2]{%
#1\refstepcounter{mylabelcounter}%
\immediate\write\@auxout{%
  \string\newlabel{#2}{{1}{\thepage}{{\unexpanded{#1}}}{mylabelcounter.\number\value{mylabelcounter}}{}}%
}%
}
\begin{document}

\setlength{\fboxsep}{0pt}
\setlength{\fboxrule}{.1pt}

\title[A characterisation of graphs quasi-isometric to $K_4$-minor-free graphs]{A characterisation of graphs quasi-isometric\\ to $K_4$-minor-free graphs}

\author[S.\ Albrechtsen \and R.\ W.\ Jacobs \and P.\ Knappe]{Sandra Albrechtsen, Raphael W. Jacobs, Paul Knappe}
\address{University of Hamburg, Department of Mathematics, Bundesstraße 55 (Geomatikum), 20146 Hamburg, Germany}
\email{\{sandra.albrechtsen, raphael.jacobs, paul.knappe\}@uni-hamburg.de}
\author[P.\ Wollan]{Paul Wollan}
\address{University of Rome, “La Sapienza”, Department of Computer Science, Via Salaria 113, 00198 Rome, Italy}
\email{wollan@di.uniroma1.it}

\keywords{Coarse graph theory, graph-theoretic geometry, quasi-isometry, fat minors, graph-decomposition, $K_4$ minor, cactus}
\subjclass[2020]{51F30, 05C83, 05C10}

\begin{abstract}
    We prove that there is a function $f$ such that every graph with no $K$-fat $K_4$ minor is $f(K)$-quasi-isometric to a graph with no $K_4$ minor.
    This solves the~$K_4$-case of a general conjecture of Georgakopoulos and Papasoglu.
    Our proof technique also yields a new short proof of the respective~$K_4^-$-case, which was first established by Fujiwara and Papasoglu.
\end{abstract}

\maketitle

\section{Introduction}

\subsection{Quasi-isometry and fat minors}

Gromov's \cite{Gromov} coarse geometry viewpoint had a profound influence on the field of geometric group theory and has resonated in neighbouring areas of study. 
At its core, this perspective revolves around the concept of \emph{quasi-isometry}, a generalisation of bi-Lipschitz maps which allows for an additive error.
Roughly speaking, two metric spaces are quasi-isometric whenever their large scale geometry coincides (see \cref{sec:quasi-isometry} for the definition).

Following Gromov's idea into the realm of graphs, Georgakopoulos and Papasoglu \cite{georgakopoulos2023graph} recently presented results and questions regarding the interplay of geometry and graphs, which they hope evolves into a coherent \emph{Coarse Graph Theory} or \emph{Graph-Theoretic Geometry}.
At the heart of their paper, they proposed a conjecture \cite{georgakopoulos2023graph}*{Conjecture~1.1} which would offer, for some prescribed finite graph $X$, a characterisation of the graphs whose large scale geometry is that of a graph with no $X$ minor. This characterisation is in terms of `fat minors', a coarse variant of graph minors. Roughly speaking, a \emph{fat minor} is a minor with additional distance constraints, which in particular ensures its branch sets to be pairwise far apart; especially, $0$-fat minors are the usual minors (see \cref{sec:fatminors} for the definition).

\begin{mainconjecture}\label{conj:agelospanos} \cite{georgakopoulos2023graph}*{Conjecture~1.1}
    Let $X$ be a finite graph.
    Then there exists a function $f \colon \N \to \N \times \N$ such that every graph with no $K$-fat $X$ minor is $f(K)$-quasi-isometric to a graph with no $X$ minor.
\end{mainconjecture}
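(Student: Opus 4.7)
The plan is to argue by induction on $|V(X)|$, combined with a structural decomposition that reduces the problem to the case of $3$-connected graphs~$X$. If $X$ is disconnected or has a cut vertex, one decomposes $X$ along its blocks and seeks a matching \emph{coarse tree-decomposition}, in the sense of Georgakopoulos--Papasoglu, of every graph with no $K$-fat $X$ minor whose torsos have no $K'$-fat minor of each block of $X$, for some $K' = K'(K)$; the inductive hypothesis then provides a quasi-isometry on each torso, which one amalgamates along the coarse tree-decomposition, using that quasi-isometries glue well over decompositions of bounded adhesion. The $2$-connected but not $3$-connected case is handled analogously by working over the tree-decomposition of $X$ into its $3$-blocks, with virtual edges accounting for the $2$-separations (here already a delicate coarsening is needed, mirroring the $K_4$-case proved in this paper).

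For the remaining case that $X$ is $3$-connected, the heart of any proof would be a coarse analogue of the Robertson--Seymour graph minor structure theorem: every graph with no $K$-fat $X$ minor should admit a coarse tree-decomposition whose torsos nearly embed, up to a bounded-error alteration, in a surface into which $X$ does not embed, together with a bounded number of apex vertices and vortices of bounded width. Once such a structural statement is available, each torso is quasi-isometric to a graph realising this structure without error, and such a graph is $X$-minor-free by Robertson--Seymour. Stitching together the local quasi-isometries along the coarse tree-decomposition then yields the desired global $f(K)$-quasi-isometry to an honest $X$-minor-free graph.

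The principal obstacle is proving this coarse structural theorem in full generality, which is well beyond current techniques and far transcends what is needed for $X = K_4$. A more accessible intermediate milestone would be the case of planar $X$, where excluding $X$ as a minor is equivalent to bounded treewidth; the conjecture then reduces to showing that bounded $K$-fat treewidth implies bounded coarse treewidth, an open question already highlighted in \cite{georgakopoulos2023graph}. For non-planar~$X$ the genuinely new difficulty is coarse surface topology: one must recognise surface embeddings, apex vertices, and vortices up to quasi-isometry, and control how they interact through the decomposition. By contrast, the present paper's treatment of $K_4$ succeeds precisely because $K_4$-minor-free graphs coincide with graphs of treewidth at most~$2$, granting access to the explicit combinatorics of series-parallel graphs, an advantage which entirely disappears for general~$X$; devising a substitute for this combinatorial foothold in the $3$-connected case is, in my view, the main barrier to resolving the conjecture.
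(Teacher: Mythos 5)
There is a fundamental problem: the statement you are trying to prove is false for general finite graphs $X$, and the paper says so explicitly. Conjecture~\ref{conj:agelospanos} has been disproved by Davies, Hickingbotham, Illingworth and McCarty \cite{CounterexAgelosPanosConjecture}, so no proof strategy — however the cases are organised — can succeed for arbitrary $X$. The paper accordingly does not prove the conjecture; it only establishes the special cases $X=K_4$ (\cref{main:FatK4}) and $X=K_4^-$ (\cref{main:Cactus}), and it does so not by any induction on $|V(X)|$ or coarse structure theory, but by directly building an honest graph-decomposition of bounded outer-radial width and radial spread modelled on a $K_4$-minor-free graph (via \cref{thm:CompExtensionLem} and property \nameref{tag:star}), using a coarse Menger-type theorem for two paths and explicit constructions of $K$-fat $K_4$ models.

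Beyond this decisive obstruction, your sketch is a research programme rather than a proof: you yourself concede that the central ingredient, a coarse analogue of the Robertson--Seymour structure theorem, is ``well beyond current techniques.'' Several of the intermediate reductions are also unjustified and, in light of the counterexample, at least one of them must fail: it is not established that excluding a $K$-fat $X$ minor forces a coarse tree-decomposition whose torsos exclude fat minors of the blocks (or $3$-blocks) of $X$, nor that quasi-isometries can be amalgamated along such decompositions with controlled constants, nor that bounded ``$K$-fat treewidth'' implies bounded coarse treewidth (the latter is an open question you cite, not a lemma you may use). If your goal is the results this paper actually proves, the correct target is the reformulation as \nameref{conj:agelospanosGD} for $X=K_4$ and $X=K_4^-$, established through \cref{lem:K_4SatisfiesPartialExtension} and \cref{lem:K_4^-SatisfiesPartialExtension}.
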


\noindent 
Note that the case~$K = 0$ of \cref{conj:agelospanos} holds trivially (with $f(0) := (1,0)$), since $0$-fat $X$ minors are precisely the $X$ minors.

We remark that it is easy to see that the (qualitative) converse of \cref{conj:agelospanos} holds for any (not even necessarily finite) graph $X$.
\medskip

Recently, \cref{conj:agelospanos} has been disproved by Davies, Hickingbotham, Illingworth and McCarty for general graphs~$X$ \cite{CounterexAgelosPanosConjecture}*{Theorem~1}.
However, \cref{conj:agelospanos} is known to be true for particular graphs~$X$: The case $X = K_3$, which characterises graphs quasi-isometric to a forest, follows from a result of Manning \cite{Manning} (see~\cite{georgakopoulos2023graph} or \cite{bergerseymourboundeddiamTD} for a graph-theoretic proof). The case $X = K_{2,3}$ was proved by Chepoi, Dragan, Newman, Rabinovich, and Vaxes \cite{FatK23Minor}, which characterises graphs quasi-isometric to an outerplanar graph. Further, Fujiwara and Papasoglu \cite{coarsecacti} proved the case $X = K_4^-$, a $K_4$ with an edge removed, which characterises graphs quasi-isometric to a cactus. 
The case $X = K_{1,m}$ was proved by Georgakopoulos and Papasoglu~\cite{georgakopoulos2023graph}. 
Additionally, together with Diestel, Elm and Fluck, we showed in \cite{radialpathwidth} a similar, but even stronger characterisations of graphs quasi-isometric to a disjoint union of paths or to a disjoint union of subdivided stars, respectively. 

\subsection{Our results}

The main contribution of this paper is the resolution of \cref{conj:agelospanos} for $X = K_4$:

\begin{mainresult}\label{main:FatK4}
    There exists a function $f \colon \N \to \N \times \N$ such that every graph with no $K$-fat $K_4$ minor is $f(K)$-quasi-isometric to a graph with no $K_4$ minor.
\end{mainresult}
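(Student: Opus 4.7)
The plan is to produce, at scale proportional to $K$, a tree-decomposition $(T,\cV)$ of $G$ of bounded adhesion whose torsos are quasi-isometric to cycles with uniform constants, and then to assemble a $K_4$-minor-free graph $H$ by replacing each torso by a real cycle glued at (representatives of) its adhesion vertices. The guiding structural fact is that every $K_4$-minor-free graph already admits a tree-decomposition of adhesion at most two in which the torsos are cycles, triangles, edges, or single vertices; so the task is to construct a coarse analogue of this decomposition that applies under the weaker hypothesis of merely having no $K$-fat $K_4$ minor.

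First I would iteratively tear $G$ apart along `thin' separations of width two at scale $K$: pairs of small balls whose removal disconnects $G$. Such a cut must exist whenever a part is too large, because the alternative---several pairwise crossing or non-nested thin cuts---forces a fat $K_4$ minor in $G$. Once no further thin cut exists inside a part $G_t$, I would argue that $G_t$ has no $K'$-fat $K_4^-$ minor for some $K'$ depending only on $K$: any such fat $K_4^-$ inside $G_t$, together with paths through the rest of the tree-decomposition joining the two adhesion vertices on the opposite side of a cut, would upgrade to a fat $K_4$ in $G$. The $K_4^-$-case of \cref{conj:agelospanos} then yields, with uniform constants, a cactus quasi-isometric to each $G_t$; refining $T$ by the block structure of these cacti produces the desired tree-decomposition with coarse-cycle torsos.

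With this decomposition in hand, the torso-wise quasi-isometries combine with the tree structure into a global map to the graph $H$ obtained by gluing real cycles along common adhesion pairs in the obvious way. The bounded torso quasi-isometry constants and bounded adhesion together control the distortion, so $G$ is $f(K)$-quasi-isometric to $H$; and $H$, admitting a tree-decomposition of adhesion two with only cycle and edge torsos, is $K_4$-minor free.

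The main obstacle will be the first stage. Iterating coarse cuts tends to inflate the fatness parameter each round, so the argument must be set up so that all constants remain a single function of $K$ throughout, rather than growing with the depth of the recursion. Equally delicate is ensuring that the parts are genuinely free of fat $K_4^-$ minors, which requires the two vertices of every adhesion to be `linked' by short paths through the other side of the separation---the coarse analogue of a standard property of $2$-separations in $2$-connected graphs. The unified technique advertised for both $K_4$ and $K_4^-$ presumably packages these requirements into a single decomposition lemma, which I expect to be the technical heart of the proof and the place where the new short proof of the $K_4^-$ case also falls out.
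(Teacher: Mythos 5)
Your proposal takes a genuinely different route from the paper, and it has real gaps — some of which you acknowledge but do not resolve, and at least one of which you understate.

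The paper does not tear $G$ apart by iterated coarse $2$-cuts, and it does not reduce the $K_4$ case to the $K_4^-$ case. Instead it builds an honest graph-decomposition of bounded outer-radial width and radial spread directly, working radially outward from a single ball: given a ball $B$ and a component $C$ of $G-B$, it produces a local partial decomposition of a neighbourhood of $C$ modelled on a two-terminal series-parallel graph (\cref{lem:CompWithBipOfNhood,lem:CompWithThreeVertices}), and then iterates this ``one-step expansion'' via the machinery of \cref{thm:CompExtensionLem}. Crucially, each expansion step works at a \emph{fixed} scale determined only by $K$; the recursion depth is unbounded but the scale never grows. The engine that makes this work is the coarse Menger theorem for two paths (\cref{thm:DistanceMengerForTwoPaths}) combined with \cref{lem:FindingAFatK4}: either two far-apart $B_1$--$B_2$ paths exist (forcing a fat $K_4$ if they share a component of $G-(B_1\cup B_2)$), or a single small ball meets all such paths and can be used to split the component at a controlled radius. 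This is precisely the ``coarse $2$-cut'' you are after, but the paper deploys it to extend a series-parallel decomposition one node at a time rather than to exhaust the graph globally before invoking a subroutine.

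The gaps in your plan are as follows. First, the constant inflation you flag is not a loose end but a structural obstacle: if you first ``tear $G$ apart along thin separations until no further thin cut exists,'' the parts are defined by separations found at progressively degraded scales, and nothing in your sketch prevents the number of rounds from depending on $G$. The paper avoids this precisely by never exhausting the supply of coarse $2$-cuts; it always cuts at scale $R_0(K)$ and recurses on the pieces, never re-measuring. Second, the claimed implication ``no thin $2$-cut $\Rightarrow$ no fat $K_4^-$ minor in the part'' needs the two adhesion vertices on any side to be coarsely linked through the opposite side, and you rightly call this delicate; but without a concrete argument it is just the missing lemma, not a footnote. In fact it is not even clear the implication holds at the advertised scale: a fat $K_4^-$ minor inside a part gives you two branch sets $V_3,V_4$ that need to be joined by a path far from everything else, and ``no thin $2$-cut'' gives connectivity of $G$ minus two small balls, which is weaker than providing such a path disjoint from all six objects of the $K_4^-$ model simultaneously. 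Third, ``such a cut must exist whenever a part is too large, because the alternative\ldots forces a fat $K_4$ minor'' is asserted, not argued; this is where the paper's combination of \cref{thm:DistanceMengerForTwoPaths} and \cref{lem:FindingAFatK4} does the work, and the precise statement one needs (a ball of radius $O(K)$ centred \emph{on a shortest path} that meets every $N_1$--$N_2$ path through the component, with enough control to recurse) is not ``two balls whose removal disconnects $G$.'' Finally, ``the torso-wise quasi-isometries combine with the tree structure into a global map'' glosses over the part of the problem that the paper spends \cref{sec:reductionofconj:agelospanos} formalising: you need the local decompositions to agree on adhesion sets and to have bounded radial spread at every vertex including those shared between parts, which is what the component-feasibility and the $f''_1$ parameter in property \nameref{tag:star} are for.

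So: correct intuition about the role of coarse $2$-separations and the target structure (series-parallel), but the plan as written would not close without solving exactly the uniform-scale and linking problems that constitute the paper's main technical content.
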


\noindent It follows that every connected finite graph with no $K$-fat $K_4$ minor is either $(M,A)$-quasi-isometric to a $2$-connected series-parallel graph where $(M,A) := f(K)$ for the function $f$ from \cref{main:FatK4}, or it can be disconnected by removing a ball of radius $A$. 

In general, the construction of a quasi-isometry and the argument for its verification may be quite lengthy. For example, the proof of \cref{conj:agelospanos} for $X = K_4^-$ given by Fujiwara and Papasoglu \cite{coarsecacti} is more than 20 pages long.

There is an alternative framework in which quasi-isometries between graphs can be cast, which enables us to look at \cref{conj:agelospanos} from a different angle:
Graph-decompositions~\cite{GraphDec} are a natural extension of \td s which allow the bags $V_h$ of decompositions $(H, \cV)$ to be arranged along general decomposition graphs $H$ instead of just trees.
Recent applications of graph-decompositions include a local-global decomposition theorem~\cite{GraphDec} as well as the study of local separations~\cite{LocalSeps} and of locally chordal graphs~\cite{LocallyChordal}.
In \cite{radialpathwidth}*{\S~3.3} it was shown that two graphs are quasi-isometric if and only if each has a decomposition modelled on the other, subject to bounds on width parameters that correspond to the constants of the quasi-isometry (see \cref{subsec:GD,subsec:InterplayQIGD} for details).
To prove \cref{main:FatK4}, we construct such a decomposition of~$G$ modelled on a graph $H$ with no $K_4$ minor instead of defining a quasi-isometry from $G$ to $H$.
This in particular yields a more graph-theoretic flavour of the arguments used in our proof of \cref{main:FatK4}.

Our techniques additionally yield a new proof of \cref{conj:agelospanos} for $X = K_4^-$, which is significantly shorter than the original one by Fujiwara and Papasoglu \cite{coarsecacti}:

\begin{mainresult} \label{main:Cactus}
    There exists a function $f \colon \N \to \N \times \N$ such that every graph with no $K$-fat $K_4^-$ minor is $f(K)$-quasi-isometric to a graph with no $K_4^-$ minor.
\end{mainresult}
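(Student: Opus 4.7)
Following the approach developed for \cref{main:FatK4}, the plan is to construct, for any graph $G$ with no $K$-fat $K_4^-$ minor, a \gd\ of $G$ of bounded outer-radial width and radial spread that is modelled on a graph $H$ with no $K_4^-$ minor; by the equivalence in \cite{radialpathwidth}*{\S~3.4}, this immediately yields the desired quasi-isometry from $G$ to $H$. Since the graphs with no $K_4^-$ minor are precisely the cacti, the model graph $H$ will be built from cycles and edges glued together in a tree-like fashion.

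The construction proceeds in two stages, mirroring the block structure of a cactus. First, at the tree level, we find a \td\ of $G$ whose adhesions are balls of bounded radius and whose torsos are \emph{coarsely $2$-connected}, meaning that they cannot be disconnected by removing a ball of bounded radius. This step is essentially a reduction via the $X = K_3$ case of \cref{conj:agelospanos}, due to Manning~\cite{Manning} (see also \cite{bergerseymourboundeddiamTD} for a graph-theoretic proof), and the no-$K$-fat-$K_4^-$-minor hypothesis passes to each torso up to a change of constants. Second, for each coarsely $2$-connected torso $G'$, we produce a \gd\ of $G'$ modelled on a single cycle (or, in degenerate cases, an edge or a vertex). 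Substituting these cyclic \gd{}s back into the \td\ from the first stage then yields a \gd\ of $G$ modelled on a cactus, as required.

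The main difficulty lies in the second stage. Since a genuinely $2$-connected graph with no $K_4^-$ minor is just a cycle---any richer structure contains a subdivision of $K_4^-$, which is itself a theta graph---one expects, coarsely, that a coarsely $2$-connected graph with no $K$-fat $K_4^-$ minor should be quasi-isometric to a cycle. To make this precise, the plan is to pick a longest geodesic in $G'$, use coarse $2$-connectedness to close it into a ``coarse cycle'' by finding a second coarse path between its endpoints, and then cover $G'$ by a cyclically ordered sequence of balls of bounded radius such that consecutive balls overlap while non-consecutive balls remain at controlled distance. Any obstruction to such a cyclic cover---for instance, a third internally disjoint coarse path between two coarse vertices already joined by two coarse paths---would yield a $K$-fat subdivision of $K_4^-$, contradicting the hypothesis. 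Turning this informal picture into a \gd\ with bounded outer-radial width and radial spread will be the technical heart of the argument.
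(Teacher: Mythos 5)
Your plan is not the paper's route but essentially the Fujiwara--Papasoglu strategy (a tree of coarsely $2$-connected pieces, each piece a coarse cycle), and as written it has two genuine gaps. First, the reduction in your first stage does not follow from the $X=K_3$ case or from Manning's theorem: those results apply to graphs with no fat \emph{triangle}, whereas a graph with no $K$-fat $K_4^-$ minor may contain arbitrarily fat cycles, so nothing cited gives you a \td\ of $G$ with bounded-radius ball adhesions whose torsos are coarsely $2$-connected. Producing such a ``coarse block decomposition'' with uniform constants is itself a substantial theorem (it is, in effect, where much of the length of \cite{coarsecacti} goes), and you would additionally have to justify that excluding a $K$-fat $K_4^-$ minor passes to the torsos: forming a torso adds shortcuts and shrinks distances, so a fat minor in a torso does not automatically lift to a fat minor of comparable fatness in $G$. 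Second, the step you yourself call the technical heart -- covering a coarsely $2$-connected torso by a cyclically ordered family of bounded-radius balls, i.e.\ showing it is quasi-isometric to a cycle -- is only an informal picture: a ``longest geodesic'' need not exist in an infinite graph, the closing-up of the geodesic into a coarse cycle and the control of overlaps and of the radial spread of the resulting \gd\ are exactly the delicate points, and the claim that any obstruction yields a fat theta is asserted rather than proved. So while the overall picture is plausible (and is, in spirit, how \cite{coarsecacti} proceeds), the proposal as it stands replaces the two hard steps by appeals to results that do not cover them.

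For contrast, the paper avoids both difficulties by never constructing a global block structure: it grows the decomposition outward from a ball via \cref{thm:CompExtensionLem} (property \nameref{tag:star}), using only two short lemmas. \cref{lem:FindingAK4MinusKFatMinor} shows that if a component $C$ of $G-B$ has three boundary vertices pairwise far apart, then $G$ already contains a $K$-fat $K_4^-$ minor; hence $\partial_G C$ clusters into at most two balls. If these clusters are close, a single extra bag suffices; if they are far apart, \cref{lem:PathDecomp} provides a partial path-decomposition through $C$ joining them, and attaching one node for $N_G(C)$ to both ends of the path creates the cycle of the cactus locally. This recursive, ball-by-ball construction is what keeps the proof short, and it is the part your proposal would still have to supply in some form.
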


\noindent Recall that the connected graphs with no $K_4^-$ minor are precisely the cacti, i.e.\ those graphs in which any two cycles meet in at most one vertex. In particular, every connected graph with no $K$-fat $K_4^-$ minor is $f(K)$-quasi-isometric to a cactus where $f$ is the function from \cref{main:Cactus}.
%
We also remark that our proof yields a function $f$ which is quite different from the one given by Fujiwara and Papasoglu.\footnote{
For a given graph $G$ with no $K$-fat $K_4^-$ minor, we construct a $(84K+2, 84K+2)$-quasi-isometry from some cactus $H$ to~$G$ (which then by \cref{lem:inversequasiisom} yields that $G$ is quasi-isometric to $H$).
In contrast to that Fujiwara and Papasoglu construct a $(2, 6000 \cdot 10^{100} K)$-quasi-isometry from some cactus $H'$ to $G$; their constants can be found in \cite{coarsecacti}*{Proofs of Lemma~1.9 and Theorem~2.1}. 
We remark that neither of us optimised their constants.
}
\medskip

Let us finally emphasise that we prove \cref{main:Cactus,main:FatK4} for arbitrary (finite or infinite) graphs\footnote{This made no difference in the proofs except for \cref{lem:ComponentAttachingToTwoBalls}.}. Thus, \cref{main:FatK4,main:Cactus} transfer verbatim to length spaces, a generalisation of geodesic metric spaces \cite{georgakopoulos2023graph}*{Observation~2.1}.

\subsection{Applications}

Bonamy, Davies, Esperet and Wesolek brought the following consequences of \cref{main:FatK4} to our attention \cite{DavisPersonalCommunication}.
We thank them for allowing us to include those applications in this paper.

Gromov \cite{Gromov} introduced \defn{asymptotic dimension} of metric spaces in the context of geometric group theory; see \cite{AsDimSurvey} for a survey on asymptotic dimension and its group-theoretic applications.\footnote{For the various equivalent definitions of `asymptotic dimension', we refer the reader to \cite{asymptoticdimminorclosed}*{\S~1.1}.
For example, the \defn{asymptotic dimension} of a graph class $\cG$ is the least $n \in \N$ such that there exists a function $f$ such that for all $G \in \cG$ and for all $r \in \N$, $V(G)$ can be coloured with $n+1$ colours so that for all $v, u \in V(G)$, if they are connected by a monochromatic path in $G^r$, then the distance between $v,u$ in G is $\leq f(r)$ \cite{asymptoticdimminorclosed}*{Proposition~1.17}.}
Bonamy, Bousquet, Esperet, Groenland, Liu, Pirot and Scott proved that for any finite graph $X$, the class of graphs with no $X$ minor has asymptotic dimension at most $2$ \cite{asymptoticdimminorclosed}*{Theorem~2}, and they asked whether this can be generalised to classes of graphs excluding a graph $X$ as a $K$-fat minor for some $K \in \N$ \cite{asymptoticdimminorclosed}*{Question~5}. 
It now follows from \cref{main:FatK4} that this is true for $X = K_4$.

\begin{theorem}\label{theorem:NoFatK4yieldsAsDim1}
    For every $K \in \N$, the class of graphs with no $K$-fat $K_4$ minor has asymptotic dimension at most $1$.
\end{theorem}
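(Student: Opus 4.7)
The plan is to combine \cref{main:FatK4} with the quasi-isometry invariance of asymptotic dimension and a bound of $1$ on the asymptotic dimension of $K_4$-minor-free graphs.

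For the first ingredient, asymptotic dimension of graph classes (in the sense of the footnote) is preserved under uniformly bounded quasi-isometries: given an $(M,A)$-quasi-isometry $\varphi$ from $G$ to $H$ and a colouring of $V(H)$ witnessing that $H$ has asymptotic dimension at most $n$ with some distance function $f$, one pulls back via $\varphi$ to obtain a colouring of $V(G)$ whose monochromatic $G^r$-components have diameter bounded in terms of $M$, $A$, $f$, and $r$ only. Applied to \cref{main:FatK4}, this reduces the claim to bounding the asymptotic dimension of the class of graphs with no $K_4$ minor.

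For the second ingredient, I would exploit that $K_4$-minor-free graphs have treewidth at most $2$, hence admit a decomposition along their block-cut trees into $2$-connected series-parallel blocks (or single edges), and each such block further decomposes via its SPQR-tree into cycles and single edges. Since cycles and single edges have asymptotic dimension at most $1$, and both levels of decomposition are tree-like with very small gluing sets (a single cut vertex or a pair of vertices in a virtual edge), a suitable union theorem for asymptotic dimension along trees of spaces should allow these pieces to be assembled uniformly to yield asymptotic dimension at most $1$ across the whole class.

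The hardest step, I expect, is the second one. The classical Hurewicz-type union theorem only yields $\operatorname{asdim}(A \cup B) \le \operatorname{asdim} A + \operatorname{asdim} B + 1$ in general, which would give only the weaker bound of $2$ already established by Bonamy, Bousquet, Esperet, Groenland, Liu, Pirot and Scott for arbitrary minor-closed classes. To avoid accruing the extra dimension, one must exploit the specific tree-like structure of the block-cut and SPQR-tree decompositions---for instance, by designing a single colouring that is consistent across shared cut vertices and across each separation pair---rather than treating the union purely metrically. Alternatively, one could cite a direct bound from the literature giving $\operatorname{asdim} \le 1$ for graphs of treewidth at most $2$ if one is available.
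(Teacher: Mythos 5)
Your overall strategy is the same as the paper's: reduce via \cref{main:FatK4} and quasi-isometry invariance of asymptotic dimension to bounding the asymptotic dimension of the class $\Forbminor(K_4)$ by $1$. The difference lies in how you handle that second ingredient. The paper does not reprove it: since $K_4$ is planar, the class of $K_4$-minor-free graphs has asymptotic dimension at most $1$ directly by \cite{asymptoticdimminorclosed}*{Theorem~1.2} (excluding a planar minor, equivalently bounded treewidth, gives asymptotic dimension at most $1$), and the quasi-isometry invariance is likewise just cited (\cite{AsDimSurvey}*{Proposition~22}). So the ``direct bound from the literature'' that you hedge on (``if one is available'') is available, in the very paper whose Theorem~2 you correctly identify as giving the weaker bound of $2$ for general minor-closed classes; invoking it settles the step you flag as hardest. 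Your primary plan of rebuilding that bound from scratch via block-cut trees, SPQR-trees and a union theorem is, as you yourself note, incomplete: the Hurewicz-type union theorem only yields asymptotic dimension at most $2$, and constructing a colouring that is uniform over the whole class and consistent across cut vertices and separation pairs is essentially the content of the cited theorem, not a routine assembly. So as written, the proposal is correct provided you take the fallback route of citing the known bound; the do-it-yourself route would need substantial additional work to count as a proof.
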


\begin{proof}
    Let $\cH$ be the class of all graphs with no $K_4$ minor. Since $K_4$ is planar, $\cH$ has asymptotic dimension at most $1$ by \cite{asymptoticdimminorclosed}*{Theorem~1.2}.
    Let $K \in \N$.
    By \cref{main:FatK4}, every graph with no $K$-fat $K_4$ minor is $f(K)$-quasi-isometric to a graph in $\cH$ where $f$ is the function from \cref{main:FatK4}.
    Since asymptotic dimension is preserved under quasi-isometry  \cite{AsDimSurvey}*{Proposition~22}, the class of graphs with no $K$-fat $K_4$ minor thus has asymptotic dimension at most $1$, too.
\end{proof}

A class of graphs is \defn{hereditary} if it is closed under taking induced subgraphs.
Such a class of graphs is \defn{$\chi$-bounded} if there exists a function $f \colon \N \to \N$ such that $\chi(G) \leq f(\omega(G))$ for every graph $G$ in the class, where $\chi$ and $\omega$ denote the \emph{chromatic number} and the \emph{clique number}, respectively.

Burling \cite{burling} constructed a sequence of  graphs with increasing chromatic number that contain no $K_3$ as a subgraph; for a modern study of these graphs, see \cite{burlingrevisitedI}.
In the following, we will refer to the induced subgraphs of graphs in the Burling sequence as \defn{Burling graphs}. 
From \cref{theorem:NoFatK4yieldsAsDim1} it now follows that the class of Burling graphs provides an example for a graph class which has bounded asymptotic dimension but is not $\chi$-bounded.

\begin{example}
    \emph{There exists a hereditary class of graphs which has asymptotic dimension at most $1$ and is not $\chi$-bounded.}
\end{example}

\begin{proof}
    The hereditary class of Burling graphs is not $\chi$-bounded, as Burling graphs do not contain the complete graph $K_3$, but have unbounded chromatic number.
    Hence, it remains to prove that the class of Burling graphs has asymptotic dimension at most $1$. For this, by \cref{theorem:NoFatK4yieldsAsDim1}, it suffices to show that Burling graphs do not contain $2$-fat $K_4$ minors.
    As Burling graphs do not contain $K_3$ as a subgraph, every $2$-fat $K_4$ minor in a Burling graph $G$ yields a subdivision of $K_4$ as an induced subgraph of $G$ such that its branch vertices have distance at least $2$.
    By \cite{burlingrevisitedII}*{Theorem~7.4}, every subdivision of a $K_4$ which is an induced subgraph of a Burling graph contains at least one non-subdivided edge.
    Hence, $G$ cannot contain a $2$-fat $K_4$ minor.
\end{proof}

\subsection{An open conjecture}

Even though \cref{conj:agelospanos} is disproved for general graphs $X$~\cite{CounterexAgelosPanosConjecture}, we would like to draw the reader's attention to the following special case\footnote{Note that one may replace the graph $X$ in \cref{conj:agelospanos} with any finite set of finite graphs.} of \cref{conj:agelospanos}, which is still open. It is a coarse variant of Kuratowski's theorem and was first conjectured by Georgakopoulos and Papasoglu \cite{georgakopoulos2023graph}:

\begin{mainconjecture}
    There exists a function $f: \N \rightarrow \N \times \N$ such that every graph with no $K$-fat $K_5$ and $K_{3,3}$ minor is $f(K)$-quasi-isometric to a planar graph.
\end{mainconjecture}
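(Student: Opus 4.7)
The plan is to extend the graph-decomposition strategy that underlies the proofs of \cref{main:FatK4,main:Cactus}: rather than constructing a quasi-isometry from $G$ directly, we would aim to build a graph-decomposition $(H,\cV)$ of $G$ modelled on a planar graph $H$ with bounded outer-radial width and bounded radial spread. By the equivalence established in \cite{radialpathwidth}*{\S~3.4}, such a decomposition automatically yields a quasi-isometry from $G$ to $H$. Since the Kuratowski--Wagner theorem characterises planarity via excluded $K_5$ and $K_{3,3}$ minors, the hypothesis ``no $K$-fat $K_5$ and no $K$-fat $K_{3,3}$ minor'' should be the natural coarse input, and the problem reduces to finding the right notion of ``coarse planar embedding'' of $G$.

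The first step would be a coarse analogue of Tutte's decomposition into $3$-connected components. Exactly as in the proofs of \cref{main:FatK4,main:Cactus}, one first decomposes $G$ along small-order coarse separators, that is, along balls of bounded radius whose removal disconnects the graph, thereby reducing the problem to pieces that are \emph{coarsely $3$-connected}. These pieces are then to be embedded into the sphere up to quasi-isometry and glued back along the coarse $2$-separators to form the edges of the planar model $H$. For $K_4$-minor-free graphs the $3$-connected pieces are trivial, which is why \cref{main:FatK4} could be handled by essentially gluing cycles along coarse $2$-separators; in the planar setting this middle step is substantially more delicate and will drive the whole argument.

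The main obstacle is precisely a coarse analogue of Whitney's theorem: one has to prove that every coarsely $3$-connected graph with no $K$-fat $K_5$ and no $K$-fat $K_{3,3}$ minor is quasi-isometric to a $3$-connected planar graph. Classically, Whitney's theorem supplies a unique planar embedding whose face boundaries are precisely the non-separating induced cycles. To transfer this to the coarse setting one would have to isolate a family of ``coarse non-separating cycles'' that play the role of the faces of an embedding, and to show that the resulting rotation system is globally consistent and describes a planar model. No such tool is currently available, and its construction seems to require genuinely new ideas, in particular ones that exploit the specific combinatorial rigidity of $K_5$ and $K_{3,3}$ so as to sidestep the counterexample of Davies, Hickingbotham, Illingworth and McCarty~\cite{CounterexAgelosPanosConjecture} for general excluded minors.
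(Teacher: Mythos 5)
The statement you are addressing is not a theorem of the paper but a conjecture that the authors explicitly leave open: they present it as a special case of \cref{conj:agelospanos} that survives the counterexample of \cite{CounterexAgelosPanosConjecture} and note that it ``is still open''. There is therefore no proof in the paper to compare your text against, and your proposal is correctly calibrated as a research sketch rather than a proof. That said, the gap is wider than your text suggests: \emph{both} of your two main steps, not only the second, would require substantial new machinery.

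The reduction to a \gd\ modelled on a planar graph $H$ with bounded outer-radial width and radial spread is sound and exactly matches the paper's philosophy, via \cref{lem:GraphDecToQuasiIso,lem:QuasiIsoToGraphDec}. But your proposed ``first step'' -- a coarse Tutte decomposition along fat $2$-separators into coarsely $3$-connected pieces -- is itself not available. The paper's extension machinery (\cref{thm:CompExtensionLem} together with property~\nameref{tag:star}) works for $K_4^-$ and $K_4$ because, after excluding a fat $K_4^-$ or $K_4$, the boundary $\partial_G C$ of each remaining component attaches to the current ball along at most two small clusters (\cref{lem:FindingAK4MinusKFatMinor} and its analogues in \cref{lem:CompWithThreeVertices,lem:CompWithBipOfNhood}); the extension step is then driven by the coarse Menger theorem for \emph{two} paths, \cref{thm:DistanceMengerForTwoPaths}. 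Excluding fat $K_5$ and $K_{3,3}$ gives no such control: $\partial_G C$ may genuinely require arbitrarily many clusters, and no coarse Menger theorem for three or more paths is available to replace \cref{thm:DistanceMengerForTwoPaths}. So the ``routine'' part of your plan already hides a real obstacle. Your diagnosis of the second step -- a coarse Whitney theorem producing a consistent rotation system, or equivalently a coarse face structure, for coarsely $3$-connected graphs with no fat $K_5$ or $K_{3,3}$ minor -- is correct and is indeed the other missing idea. Finally, be a little careful with the phrase ``reduces to finding the right notion of coarse planar embedding'': even if such a notion were found, one must still verify that the resulting decomposition graph $H$ is actually planar in the classical sense and that the bags have bounded radius, which is where the quantitative content of the excluded-fat-minor hypothesis has to enter.
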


\subsection{How this paper is organised}

In \cref{sec:Prelims} we recall some important definitions and facts about the interplay between quasi-isometries and graph-decompositions of small radial width.
\cref{sec:reductionofconj:agelospanos} then contains some preparatory lemmas, which simplify the proofs of \cref{main:Cactus,main:FatK4}.
Finally, we prove \cref{main:Cactus} in \cref{sec:Cactus} and \cref{main:FatK4} in \cref{sec:SeriesParallel}.

\section{Preliminaries} \label{sec:Prelims}

We warn the reader that graphs in this paper may be infinite, unless specified otherwise.
Our notions mainly follow~\cite{DiestelBook}. In particular, if $P = p_0 \dots p_n$ is a path in a graph $G$, then we denote by \defn{$p_iPp_j$} for $i, j \in [n]$ the subpath $p_i p_{i+1} \dots p_j$ of $P$. Further, we denote by \defn{$\bar P$} the path $p_n \dots p_0$, and by \defn{$\mathring{P}$} the subpath $p_1Pp_{n-1}$ of $P$.
We recall that for two sets $U_1,U_2$ of vertices of $G$, an \defn{$U_1$--$U_2$ path} meets $U_1$ precisely in its first vertex and $U_2$ precisely in its last vertex.
For a subgraph $C$ of $G$, a \defn{$C$-path} is a non-trivial path which meets $C$ precisely in its endvertices.
A path \defn{through} a set $U$ of vertices of~$G$ is a path in $G$ whose internal vertices are contained in $U$.

We denote by \defn{$\cC(G)$} the \defn{set of components} of the graph $G$.
Given sets $U' \subseteq U$ of vertices of $G$, a component $C$ of $G-U$ \defn{attaches} to $U'$ if $C$ has a neighbour in $U'$.
The \defn{boundary $\partial_G Y$} of a subgraph $Y$ of $G$ is the set $N_G(V(G-Y))$ of vertices of $Y$ that send in~$G$ an edge outside of $Y$. For example, the boundary $\partial_G C$ of a component of $G-U$ is $N_G(U) \cap V(C)$.

\subsection{Distance, radius and balls}

Let $G$ be a graph.
We write~\defn{$\dist_G(v, u)$} for the distance of the two vertices~$v$ and~$u$ in~$G$. 
For two sets~$U$ and~$U'$ of vertices of~$G$, we write~$\dist_G(U, U')$ for the minimum distance of two elements of~$U$ and~$U'$, respectively.
If one of~$U$ or~$U'$ is just a singleton, then we may omit the braces, writing $\dist_G(v, U') := \dist_G(\{v\}, U')$ for $v \in V(G)$.

Given a set~$U$ of vertices of~$G$, the \defn{ball (in~$G$) around~$U$ of radius $r \in \N$}, denoted as~\defn{$B_G(U, r)$}, is the set of all vertices in~$G$ of distance at most~$r$ from~$U$ in~$G$.
If~$U = \{v\}$ for some~$v \in V(G)$, then we omit the braces, writing~$B_G(v, r)$ for the ball (in $G$) around~$v$ of radius~$r$.
Additionally, we abbreviate the induced subgraph on $B_G(U,r)$ of $G$ with $\defnwe{G[U,r]} := G[B_G(U,r)]$.

Further, the \defn{radius}~\defn{$\rad(G)$} of~$G$ is the smallest number~$k \in \N \cup \{\infty\}$ such that there exists some vertex~$w \in V(G)$ with~$\dist_G(w, v) \le k$ for every vertex~$v$ of $G$. If $G$ is empty, then we define its radius to be~$0$.
We remark that if $G$ is disconnected but not the empty graph, then its radius is $\infty$.
Note that~$G$ has radius at most~$k$ if and only if there is some vertex~$v$ of $G$ with~$V(G) = B_G(v, k)$.
Additionally, if $U \subseteq V(G)$, then the \defn{radius of $U$ in $G$}, denoted as $\defnwe{\rad_G(U)}$ is the smallest number $k \in \N$ such that there exists some vertex $v$ of $G$ with $U \subseteq B_G(v, k)$ or $\infty$ if such a $k \in \N$ does not exist. 

If $Y$ is a subgraph of $G$, then we abbreviate $\dist_G(U,V(Y))$, $\rad_G(V(Y))$, $B_G(V(Y),r)$, and $G[V(Y),r]$ as $\dist_G(U,Y)$, $\rad_G(Y)$, $B_G(Y,r)$, and $G[Y,r]$, respectively.

\subsection{Fat minors}\label{sec:fatminors}

In this paper we deviate from the definition of minor in \cite{DiestelBook}, and consider the following equivalent definition:
Let $G, X$ be graphs.
A \defn{model} $(\cV,\cE)$ of $X$ in $G$ is a collection $\cV$ of disjoint sets $V_x \subseteq V(G)$ for vertices $x$ of $X$ such that each $G[V_x]$ is connected, and a collection $\cE$ of internally disjoint $V_{x_0}$--$V_{x_1}$ paths $E_{e}$ for edges $e=x_0x_1$ of $X$ which are disjoint from every $V_x$ with $x \neq x_0, x_1$.\footnote{
By enlarging the branch sets $V_x$ along the `adjacent' branch paths $E_{xy}$, we obtain that this notion of model is equivalent to the notion of model from \cite{DiestelBook}.}
The $V_x$ are its \defn{branch sets} and the $E_e$ are its \defn{branch paths}.
A model $(\cV, \cE)$ of $X$ in $G$ is \defn{$K$-fat} for $K \in \N$ if $\dist_G(Y,Z) \geq K$ for every two distinct $Y,Z \in \cV \cup \cE$ unless $Y = E_e$ and $Z = V_x$ for some vertex $x \in V(X)$ incident to $e \in E(X)$, or vice versa.
Then $X$ is a \defn{($K$-fat) minor} of $G$ if $G$ contains a ($K$-fat) model of $X$.
We remark that the $0$-fat minors of $G$ are precisely its minors.
By \defnwe{$\Forbminor(X)$}, we denote the class of all graphs with no $X$ minor.

\subsection{Quasi-isometries} \label{sec:quasi-isometry}

For~$M \in \R_{\geq 1}$ and~$A \in \R_{\geq 0}$, an \defn{$(M,A)$-quasi-isometry} from a graph~$H$ to a graph~$G$ is a map~$\phi\colon V(H) \to V(G)$ such that

\begin{enumerate}[label=(Q\arabic*)]
    \item \label{quasiisom:1} $M^{-1} \cdot \dist_H(h, h') - A \leq \dist_G(\phi(h),\phi(h')) \leq M \cdot \dist_H(h,h') + A$ for every~$h, h' \in V(H)$, and
    \item \label{quasiisom:2} for every vertex $v$ of $G$, there exists a node $h$ of $H$ with $\dist_G(v,\phi(h)) \leq A$.
\end{enumerate}
We say that $H$ is \defn{$(M,A)$-quasi-isometric} to $G$ if there exists a $(M,A)$-quasi-isometry from $H$ to $G$.
The following is a well-known fact. 

\begin{lemma}\label{lem:inversequasiisom}
    If a graph $H$ is $(M,A)$-quasi-isometric to a graph $G$, then $G$ is $(M,3AM)$- quasi-isometric to $H$. \qed
\end{lemma}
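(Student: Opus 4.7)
The plan is to construct a \emph{quasi-inverse} $\psi\colon V(G) \to V(H)$ of the given $(M,A)$-quasi-isometry $\phi\colon V(H) \to V(G)$ and then verify directly that $\psi$ satisfies \cref{quasiisom:1} and \cref{quasiisom:2} with parameters $(M, 3AM)$.

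First, for every $v \in V(G)$, \cref{quasiisom:2} applied to $\phi$ yields some $h_v \in V(H)$ with $\dist_G(v, \phi(h_v)) \leq A$; I would fix one such choice and set $\psi(v) := h_v$. By construction, this guarantees the key inequality $\dist_G(v, \phi(\psi(v))) \leq A$ for every $v \in V(G)$, which is what drives all subsequent estimates.

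Next, I would verify \cref{quasiisom:2} for $\psi$: given $h \in V(H)$, the vertex $v := \phi(h) \in V(G)$ satisfies $\dist_G(\phi(h), \phi(\psi(v))) \leq A$, so \cref{quasiisom:1} applied to $\phi$ at the pair $h, \psi(v)$ gives $\dist_H(h, \psi(v)) \leq M(A + A) = 2AM \leq 3AM$. For \cref{quasiisom:1} of $\psi$, I would take $v, w \in V(G)$, set $h := \psi(v)$ and $h' := \psi(w)$, and use the triangle inequality together with the defining bounds $\dist_G(v, \phi(h)), \dist_G(w, \phi(h')) \leq A$ to obtain $|\dist_G(v,w) - \dist_G(\phi(h), \phi(h'))| \leq 2A$. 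Substituting this into \cref{quasiisom:1} for $\phi$ and solving for $\dist_H(h, h')$ produces $M^{-1}\dist_G(v,w) - 3AM \leq \dist_H(\psi(v), \psi(w)) \leq M\dist_G(v,w) + 3AM$, as required.

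I expect no genuine obstacle: the argument is routine triangle-inequality bookkeeping once $\psi$ has been chosen via \cref{quasiisom:2}. The only subtle point is the inflation of the additive constant from $A$ to $3AM$, which comes from clearing the factor $M^{-1}$ when rearranging the lower bound (using $M \geq 1$, so $3A/M \leq 3AM$); this is exactly why the statement loses a factor of $M$ in the error term rather than preserving it at $3A$.
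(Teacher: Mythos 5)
Your proposal is correct: the quasi-inverse construction via \cref{quasiisom:2}, together with the triangle-inequality estimates you describe, does yield the bounds $\dist_H(\psi(v),\psi(w)) \leq M\dist_G(v,w) + 3AM$ and $\dist_H(\psi(v),\psi(w)) \geq M^{-1}\dist_G(v,w) - 3A/M \geq M^{-1}\dist_G(v,w) - 3AM$, as well as \cref{quasiisom:2} with constant $2AM \leq 3AM$. The paper omits the proof entirely, citing the statement as a well-known fact, and your argument is exactly the standard one it has in mind.
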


\subsection{Graph-decompositions}\label{subsec:GD}

Let~$G$ and~$H$ be graphs and let~$\cV=(V_h)_{h\in H}$ be a family of sets~$V_h$ of vertices of~$G$.
We call $(H, \cV)$ an \defn{\Hd{$H$}} of~$G$, a \defn{decomposition} of $G$ \defn{modelled on $H$}, or just a \defn{\gd} \cite{GraphDec}, if
\begin{enumerate}[label=(H\arabic*)]
    \item \label{GraphDecomp:H1} $\bigcup_{h\in H} G[V_h] = G$, and 
    \item \label{GraphDecomp:H2} for every vertex~$v$ of $G$, the graph~$H_v := H[\{h\in V(H)\mid v\in V_h\}]$ is connected. 
\end{enumerate}
The sets~$V_h$ are called the \defn{bags} of this \gd, their induced subgraphs~$G[V_h]$ are its \defn{parts}, and the graph~$H$ is its~\defn{decomposition graph}.
Whenever a \gd\ is introduced as~$(H, \cV)$, we tacitly assume~$\cV = (V_h)_{h \in H}$.
A graph-decomposition $(H,\cV)$ is \defn{honest}, if all its bags $V_h$ are non-empty and for every edge $h_0h_1$ of $H$ the bags $V_{h_0}$ and $V_{h_1}$ intersect non-emptily.
By \cite{radialpathwidth}*{Lemma~3.2}, every \gd\ not only satisfies \cref{GraphDecomp:H2} but also
\begin{enumerate}[label=(H\arabic*')]
    \setcounter{enumi}{1}
    \item \label{GraphDecomp:H2'} for any connected subgraph~$Y$ of~$G$, the graph~$H_Y := H[\{h \in V(H) \mid V(Y) \cap V_h \neq \emptyset\}]$ is connected. 
\end{enumerate}

A graph-decomposition of an induced subgraph $Y$ of $G$ is a \defn{partial graph-decomposition} of $G$ with \defn{support}~$Y$. 
Note that every partial graph-decomposition of $G$ with support $Y = G$ is a \gd\ of $G$ and vice versa.

Let $G$ be a graph and let $\cH = (H,\cV)$ be a partial graph-decomposition of $G$ with support $Y$. The \defn{outer-radial width} of $(H, \cV)$ is
\begin{equation*}
    \orw_G(\cH) := \sup_{h \in H} \rad_G(V_h),
\end{equation*}
and the \defn{(inner-)radial spread} of $(H, \cV)$ is
\begin{equation*}
    \irs(\cH) := \sup_{v \in Y} \rad_{H_v}(H_v).
\end{equation*}
Additionally, we set $\irs(\cH,v) := \rad_{H_v}(H_v)$ for a vertex $v$ of $Y$.
To shorten notation, we say that a partial graph-decomposition $\cH$ of a graph $G$ is \defn{$(R_0,R_1)$-radial} in $G$ for $R_0, R_1 \in \N$, if $ \orw_G(\cH) \leq R_0$ and $\irs(\cH)\leq R_1$.

We remark that the inner-radial spread of $(H,\cV)$ is at least the \defn{outer-radial spread} $\rs(\cH) = \sup_{v \in Y} \rad_H(H_v)$ of $(H,\cV)$, which we have used in \cite{radialpathwidth}*{\S~3.3}.
The reason why we prefer to work with inner-radial spread in this paper is that the inner-radial spread of a \gd\ $(H,\cV)$ does not change if we delete edges $h_0h_1$ of $H$ whose corresponding intersection $V_{h_0} \cap V_{h_1}$ of bags is empty.

The following observation is immediate from the definitions.

\begin{observation} \label{obs:Connected}
    Let $G$ be a graph and, for every component $C$ of $G$, let $\cH^C = (H^C, \cV^C)$ be a \gd\ of $C$. Then $\cH = (H, \cV)$ is a \gd\ of $G$ where $H$ is the disjoint union of  $H^C$ and $V_h := V^C_h$ for all $h \in V(H)$ and the unique $C \in \cC(G)$ such that $h \in V(H^C)$.

    In particular, $\orw_G(\cH) = \sup_{C \in \cC(G)}\orw_{C}(\cH^C)$ and $\irs(\cH) = \sup_{C \in \cC(G)}\irs(\cH^C)$. Moreover, if, for some connected graph $X$, we have $H^C \in \Forbminor(X)$ for all $C \in \cC(G)$, then also $H \in \Forbminor(X)$. \qed
\end{observation}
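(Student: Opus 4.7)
The plan is to verify the three claims in turn; none should require more than a direct unfolding of definitions once we note that distinct components of $G$ are mutually non-adjacent and that $H$ is literally the disjoint union of the $H^C$, so the two levels decouple.

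First, to see that $\cH$ is a \gd\ of $G$, I check \cref{GraphDecomp:H1} and \cref{GraphDecomp:H2}. For \cref{GraphDecomp:H1}, note that every edge and every vertex of $G$ lies in some unique component $C \in \cC(G)$, and the hypothesis that $\cH^C$ is a \gd\ of $C$ gives $\bigcup_{h \in H^C} G[V^C_h] = \bigcup_{h \in H^C} C[V^C_h] = C$; taking the union over all $C$ recovers $G$. For \cref{GraphDecomp:H2}, fix $v \in V(G)$ and let $C$ be its component. Since $V_h \subseteq V(C)$ for every $h \in V(H^C)$ and the bags of distinct $\cH^{C'}$ lie in different components, the set $\{h \in V(H) : v \in V_h\}$ coincides with $\{h \in V(H^C) : v \in V^C_h\}$. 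Hence $H_v = H^C_v$, which is connected by \cref{GraphDecomp:H2} applied to $\cH^C$.

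For the widths, the key observation is that for any $h \in V(H^C)$, the bag $V_h = V^C_h$ is contained in $V(C)$, and distances within the component agree with distances in $G$; thus $\rad_G(V_h) = \rad_C(V^C_h)$. Taking suprema across all $h$ and $C$ yields $\orw_G(\cH) = \sup_{C \in \cC(G)} \orw_C(\cH^C)$. Dually, for every $v \in V(G)$ lying in a component $C$, the identification $H_v = H^C_v$ from above together with the fact that $H^C$ is a union of components of $H$ gives $\rad_H(H_v) = \rad_{H^C}(H^C_v)$; taking suprema yields $\irs(\cH) = \sup_{C \in \cC(G)} \irs(\cH^C)$.

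Finally, suppose for contradiction that $X$ is a connected graph with $X \nleq H^C$ (as a minor) for every $C$, yet $H$ admits a model $(\cV_X, \cE_X)$ of $X$. Since each branch set $V_x$ induces a connected subgraph and each branch path $E_e$ is connected, and since $X$ is connected, the union $\bigcup_x G[V_x] \cup \bigcup_e E_e$ is a connected subgraph of $H$; therefore it is wholly contained in a single component of $H$, i.e.\ in some $H^C$. But then this model of $X$ is a model in $H^C$, contradicting $H^C \in \Forbminor(X)$. Hence $H \in \Forbminor(X)$. The only mildly non-obvious step is this last one, where connectedness of $X$ is used to confine the model to one component of $H$; everything else is bookkeeping.
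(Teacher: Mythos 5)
Your proof is correct and is exactly the direct unfolding of the definitions that the paper intends when it states this observation as immediate (with no written proof): the decoupling of components of $G$ and of $H$, the distance/radius identifications, and the use of connectedness of $X$ to confine a model to one $H^C$ are all the right steps. The only blemish is the typo $G[V_x]$ where you mean $H[V_x]$ in the final paragraph; it does not affect the argument.
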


\subsection{Interplay between quasi-isometries and graph-decompositions}\label{subsec:InterplayQIGD}

By \cite{radialpathwidth}*{\S~3.3}, a graph $H$ is quasi-isometric to a graph $G$ if and only if $G$ admits an honest decomposition modelled on~$H$ of bounded outer-radial width and bounded radial spread, where the constants of the quasi-isometry only depend on the outer-radial width and the radial spread of the \gd\ and vice versa.

\begin{lemma}{\cite{radialpathwidth}*{Lemma 3.9}} \label{lem:GraphDecToQuasiIso}
    If a graph $G$ admits an honest $(R_0, R_1)$-radial decomposition modelled on a graph $H$, then there is a $(\max\{2R_0, 2R_1\}, \max\{2R_0, 2R_1\})$-quasi-isometry from $H$ to $G$.
\end{lemma}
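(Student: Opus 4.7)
The plan is to define $\phi \colon V(H) \to V(G)$ by choosing, for every node $h$ of $H$, an arbitrary vertex $\phi(h) \in V_h$ (the bag is non-empty by convention), and then verify the two quasi-isometry axioms with $M := \max\{2R_0, 2R_1\}$ and $A := M$ via direct triangle-inequality arguments. Two elementary observations drive everything: any two vertices in a common bag $V_h$ are at $G$-distance at most $2R_0$ (since $V_h$ fits into a $G$-ball of radius $R_0$ by $\orw_G(\cH) \leq R_0$), and any two nodes in a common $H_v$ are at $H$-distance at most $2R_1$ (by $\irs(\cH) \leq R_1$).

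For condition \ref{quasiisom:2}, every $v \in V(G)$ lies in some bag $V_h$ by \ref{GraphDecomp:H1}, and the first observation gives $\dist_G(v, \phi(h)) \leq 2R_0 \leq A$. For the upper bound in \ref{quasiisom:1}, I would fix a geodesic $h = h_0 h_1 \cdots h_d = h'$ in $H$ of length $d := \dist_H(h, h')$ and invoke honesty to pick $u_i \in V_{h_i} \cap V_{h_{i+1}}$ for each $i < d$. Each consecutive pair in the sequence $\phi(h_0), u_0, u_1, \ldots, u_{d-1}, \phi(h_d)$ shares a bag, so the first observation combined with the triangle inequality yields $\dist_G(\phi(h), \phi(h')) \leq 2R_0(d+1) \leq M \cdot d + A$.

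For the lower bound in \ref{quasiisom:1}, I would dually fix a geodesic $\phi(h) = v_0 v_1 \cdots v_d = \phi(h')$ in $G$ of length $d := \dist_G(\phi(h), \phi(h'))$. For each edge $v_i v_{i+1}$, axiom \ref{GraphDecomp:H1} provides a bag $V_{g_i}$ containing both endpoints, so $g_i \in H_{v_i} \cap H_{v_{i+1}}$. Since $h \in H_{v_0}$ and $h' \in H_{v_d}$, each consecutive pair in $h, g_0, g_1, \ldots, g_{d-1}, h'$ lies in a common $H_v$, so the second observation bounds $\dist_H(h, h') \leq 2R_1(d+1) \leq M \cdot d + A$. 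Rearranging gives the desired lower bound $\dist_G(\phi(h), \phi(h')) \geq M^{-1} \dist_H(h, h') - A$.

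No step poses a genuine conceptual obstacle; honesty is exactly the hypothesis needed to turn an $H$-geodesic into a chain of overlapping bags (driving the upper bound), while the inner-radial-spread bound dually turns a $G$-geodesic into a chain of close-by $H$-nodes (driving the lower bound). The only care required is the bookkeeping of the extra $+1$ appearing at each end of these two chains, which is precisely what produces the additive error $A$ rather than a purely multiplicative bound.
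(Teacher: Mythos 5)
Your proof is correct, and since the paper simply cites this as Lemma~3.9 of \cite{radialpathwidth} without reproving it, there is no in-paper argument to compare against; your derivation is the natural one and surely mirrors the cited source. The two observations (bag diameter $\leq 2R_0$ from the outer-radial-width bound, and $H_v$-diameter $\leq 2R_1$ from the inner-radial-spread bound), honesty to chain bags along an $H$-geodesic for the upper bound, and \ref{GraphDecomp:H1} to chain nodes along a $G$-geodesic for the lower bound are exactly the ingredients needed, and the $+1$ bookkeeping correctly yields the additive term.
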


\begin{lemma}{\cite{radialpathwidth}*{Lemma 3.10}}\label{lem:QuasiIsoToGraphDec}
    If a graph $H$ is $(M,A)$-quasi-isometric to some graph $G$ for $M \in \N_{\geq 1}$ and $A \in \N$, then $G$ admits an honest $(A (M^2+2) + M \lceil M(A+1)/2 \rceil, 6MA + M + 1)$-radial $H$-decomposition. 
\end{lemma}

By \cref{lem:GraphDecToQuasiIso,lem:QuasiIsoToGraphDec}, \cref{conj:agelospanos} can thus be reformulated in terms of \gd s in the following qualitatively equivalent way.

\begin{customthm}{\cref*{conj:agelospanos}$^\prime$} \label{conj:agelospanosGD}
    Let $X$ be a finite graph.
    Then there exist a function $f \colon \N \to \N \times \N$ such that every graph~$G$ with no $K$-fat $X$ minor admits an honest $f(K)$-radial decomposition $(H,\cV)$ modelled on a graph~$H$ with no~$X$ minor.
\end{customthm}

\noindent We will later refer to the coordinates of $f$ as $f_0$ and $f_1$ in the sense of $f(K) = (f_0(K), f_1(K))$ and vice versa.

\section{Preparatory work: Sequences of graph-decompositions}\label{sec:reductionofconj:agelospanos}

As we have mentioned already in the introduction, we will prove \cref{main:Cactus,main:FatK4} by showing the equivalent statements about \gd s, that is, we will prove \cref{conj:agelospanosGD} for $X = K_4^-$ and $X = K_4$. For this we construct for a given graph $G$ the desired \gd\ of $G$ recursively by `extending' partial \gd s of $G$ step-by-step to one of $G$. More precisely, given a (suitable) partial \gd\ $(H^d, \cV^d)$ of $G$ with support $Y^d$ we will extend it to a $(R_0, R_1)$-radial partial \gd\ $(H^{d+1}, \cV^{d+1})$ of $G$ with support $Y^{d+1} \supseteq Y^d$. Here, we will ensure that $N_G(Y^d) \subseteq V(Y^{d+1})$, so that the desired \gd\ may arise as the `limit' of the $(H^d, \cV^d)$.

In this section we will formalise this procedure and establish a theorem (\cref{thm:CompExtensionLem}) that will yield the \gd\ at once if we can do one simplified `extension step'; see \textup{\nameref{tag:star}} below.
\medskip

The \defn{restriction} $(H, \cV)$ of a \gd\ $(H, \cV')$ of $G$ to a subgraph $Y$ of $G$ is given by $V_h := V'_h \cap V(Y)$.
Conversely, a partial \gd\ $(H, \cV)$ of $G$ with support $Y$ \defn{extends} to a partial \gd\  $(H',\cV')$ of $G$ with support $Y'$ if $H \subseteq H'$, $Y \subseteq Y'$, and $V_h \subseteq V'_h$ for every node $h$ of $H$. 
Note that if $(H, \cV)$ and $(H, \cV')$ are partial \gd s of $G$ with supports $Y$ and $Y'$, respectively, such that $Y \subseteq Y'$, and with the same decomposition graph $H$, then $(H, \cV)$ extends to $(H, \cV')$ if and only if $(H, \cV)$ is the restriction of $(H, \cV')$ to $Y$.

Let $(\cH^d)_{d \in \N}$ be a sequence of partial \gd s $\cH^d = (H^d, \cV^d)$ of $G$ with supports $Y^d$ such that $\cH^{d}$ extends to $\cH^{d+1}$ for every $d \in \N$.
Then we set $Y := \bigcup_{n \in \N} Y^d$, $H := \bigcup_{d \in \N} H^d$ and $V_h := \bigcup_{d \in \N: h \in H^d} V_h^d$ for every node $h$ of $H$.
We call the pair $\cH = (H, \cV)$ the \defn{limit} of $(\cH^d)_{d \in \N}$.

The following observation is immediate from the definitions:

\begin{observation}\label{lem:LimitGD}
    Let $(\cH^d)_{d \in \N}$ be a sequence of partial \gd s of a graph $G$ such that $\cH^{d}$ extends to $\cH^{d+1}$ for every $d \in \N$.
    Then the limit $\cH$ is a partial \gd\  of $G$ with support $Y = \bigcup_{d \in \N} Y^d$.
    Moreover,
    \begin{enumerate}
        \item if, for a finite graph $X$, $H^d \in \Forbminor(X)$ for every $d \in \N$, then $H \in \Forbminor(X)$,
        \item if $\cH^d$ is honest for every $d \in \N$, then $\cH$ is honest,
        \item $\rad_G(V_h) \leq \sup_{d : h \in H^d} \rad_G(V^d_h)$ for every node $h$ of $H$, and
        \item $\irs(\cH,v) \leq \sup_{d : v \in Y^d} \irs(\cH^d,v)$ for every vertex $v$ of $Y$. \qed
    \end{enumerate}
\end{observation}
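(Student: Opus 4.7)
The plan is to observe that the extension conditions force $Y^d \subseteq Y^{d+1}$, $H^d \subseteq H^{d+1}$ and $V_h^d \subseteq V_h^{d+1}$ for every $h$, so that all the defining data of $\cH$ arise as ascending unions of the corresponding data for the $\cH^d$, after which each clause reduces to a routine verification from the definitions.

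First I would check that $\cH$ is a partial \gd\ of $G$ with support $Y$. For (H1), any vertex or edge of $Y$ lies in some $Y^d$ and hence in a part $G[V_h^d] \subseteq G[V_h]$ for some $h \in V(H^d) \subseteq V(H)$; conversely, any edge $uv$ of some $G[V_h]$ has its two endpoints in a common $V_h^d$ by monotonicity of the bags, so the edge already appears in $Y^d \subseteq Y$. For (H2), fix $v \in V(Y)$ and let $d_0$ be the smallest index with $v \in Y^{d_0}$; then $(H^d_v)_{d \geq d_0}$ is an ascending chain of non-empty connected subgraphs, each connected by (H2) applied to $\cH^d$, whose union equals $H_v$ and is therefore connected.

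Clauses (i) and (ii) are then immediate: any $X$ minor of $H$ uses only finitely many vertices and edges of the ascending union $H = \bigcup_d H^d$, so it is already a minor of some single $H^d$, contradicting $H^d \in \Forbminor(X)$; and any edge $h_0 h_1 \in E(H)$ first appears in some $E(H^d)$, where honesty of $\cH^d$ forces $V_{h_0}^d \cap V_{h_1}^d \neq \emptyset$, and this intersection is contained in $V_{h_0} \cap V_{h_1}$. For the radius bounds (iii) and (iv), both $V_h$ and $H_v$ are ascending unions of $V_h^d$ and $H^d_v$, respectively. I would deduce each inequality from the elementary observation that for an ascending chain $(X_d)$ of non-empty sets in a graph with a common seed vertex, any realising centre $c_d$ of $X_d$ lies in the ball of radius $\sup_d \rad(X_d)$ around the seed, so a uniform centre for the whole union can be extracted. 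The only potential obstacle is the non-locally-finite case, but the inequality is trivial when the supremum is infinite, and otherwise a pigeonhole argument on the fixed seed ball produces the common centre.
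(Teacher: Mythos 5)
Your construction of the limit and your verifications of (H1), (H2), (i) and (ii) are fine; for (i) you should say explicitly that, since $X$ is finite, an arbitrary model may first be replaced by one with finite branch sets (the paper makes this reduction in the proof of \cref{obs:ConstrOfSPGraphs}), and only then is the model contained in a single $H^d$. The genuine gap is in your argument for (iii) and (iv). Your dichotomy ``either the supremum is infinite, or a pigeonhole on the seed ball produces a common centre'' is false: the graphs here are arbitrary and may have vertices of infinite degree, so even when $\sup_{d}\rad_G(V^d_h)$ is finite the seed ball $B_G(u_0,\sup_d\rad_G(V^d_h))$ may be infinite, and no pigeonhole is available. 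Local finiteness is exactly the hypothesis you do not have, and the paper stresses that it treats infinite graphs.

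Worse, the step cannot be repaired, because a common realising centre need not exist at all. Let $G$ have vertices $u_0$, $c_1,c_2,\dots$ and $x_1,x_2,\dots$, with edges $u_0c_d$ for all $d$ and $c_dx_i$ for all $i\le d$, and set $U^d:=\{u_0,x_1,\dots,x_d\}$. Taking $H^d$ to be one fixed node $h$ with $V^d_h:=U^d$ and support $Y^d:=G[U^d]$ yields a sequence of extending partial \gd s satisfying the hypotheses of \cref{lem:LimitGD}; each $U^d$ has radius $1$ in $G$ (centre $c_d$), but no vertex of $G$ lies within distance $1$ of all of $u_0,x_1,x_2,\dots$, so the limit bag has radius $2$. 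Hence monotonicity alone gives only $\rad_G(V_h)\le 2\sup_{d}\rad_G(V^d_h)$ (centre the seed vertex), and the same centre-moving issue affects (iv). So items (iii) and (iv) are not obtainable by your route in the stated generality; a proof needs either local finiteness or the extra structure present where the observation is applied (in \cref{lem:CompFeasibleToBallComp} a bag is only ever enlarged inside a ball $B_G(v_h,R)$ around a centre fixed beforehand), and your proposal leaves precisely these items unproved --- indeed the example above shows the stated inequality is delicate exactly at the point you dismissed as a non-obstacle.
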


First, we describe how to extend a partial \gd\ $(H^d, \cV^d)$ with support $Y_d$ if it is possible to extend it in the direction of each remaining component of $G- Y_d$ individually,
that is if we are given for each $C \in \cC(G-Y^d)$ a partial \gd\ $\cH^C = (H^C, \cV^C)$ of $G$ with support $Y^C$ such that $N_G(C) \subseteq V(Y^C) \subseteq B_G(C,1)$.

Let $\cH = (H, \cV)$ be a partial \gd\ of $G$ with support $Y$.
For $C \in \cC(G-Y)$, a partial \gd\  $\cH^C = (H^C, \cV^C)$ of $G$ with support $Y^C \subseteq G[C,1]$ is \defn{feasible for $\cH$ and $C$}, if $\partial_G C \subseteq V(Y^C)$ and there exist nodes $h^C$ of $H^C$ and $g^C$ of $H$ with $N_G(C) = V^C_{h^C}$ and $N_G(C) \subseteq V_{g^C}$.
A family $\{\cH^C \mid C \in \cC(G-Y)\}$ of partial \gd s of $G$ is \defn{feasible for $\cH$} if each $\cH^C$ is feasible for $\cH$ and $C$.

\begin{construction}\label{constr:GraphDecompViaComps}
    Assume that $\cH = (H, \cV)$ is a
    partial \gd\ of a graph $G$ with support~$Y$. 
    Let $\{\cH^C \mid C \in \cC(G-Y)\}$ be a family of partial \gd s  $\cH^C = (H^C, \cV^C)$ of $G$ with support $Y^C$ which is feasible for $\cH$.

    We define the pair $\cH' := (H', \cV')$ as follows.
    Let $H'$ be the graph obtained from the disjoint union of~$H$ and the $H^C$ by identifying $h^C \in V(H^C)$ with $g^C \in V(H)$ for every $C \in \cC(G-Y)$.
    For each node $h$ of $H'$, we set $V_h' := V_h$ if $h \in V(H)$ and $V_h' := V_h^C$ if $h \in V(H^C) \setminus \{h^C\}$ for some $C \in \cC(G-Y)$.
    Set $Y' := Y \cup \bigcup_{C \in \cC(G-Y)} Y^C$.
\end{construction}

Note that the graph $H'$ from \cref{constr:GraphDecompViaComps} is obtained from $H$ and the $H^C$ by $1$-sums. Here, the \defn{$1$-sum} of two graphs $G_0$ and $G_1$ with respect to vertices $v_0$ of $G_0$ and $v_1$ of $G_1$ is obtained from the disjoint union of $G_0$ and $G_1$ by identifying $v_0$ with $v_1$. We remark that $\Forbminor(X)$ is closed under $1$-sums if and only if $X$ is $2$-connected. Hence, if $H$ and the $H^C$ have no $X$-minor for $X = K_4$ or $X = K_4^-$, then also the arising graph $H'$ has no $X$ minor\footnote{One could, more generally, obtain a partial \gd\ $(H', \cV')$ by identifying more than one pair of vertices of $H$ and an $H^C$ if the respective bags of $H$ and $H^C$ are suitable for this. However, since we need to ensure for the proof of \cref{main:Cactus,main:FatK4} that the arising graph $H'$ has no $K_4^-$- or $K_4$ minor, we will not do this here.}.

The following observation about $\cH'$ from \cref{constr:GraphDecompViaComps} is immediate from its construction.

\begin{observation} \label{obs:AboutConstr}
    Let $G, \cH$ and the $\cH^C$ be as in \cref{constr:GraphDecompViaComps}. Then $\cH'$ constructed as in \cref{constr:GraphDecompViaComps} from $\cH$ and the $\cH^C$ is a partial \gd\   of~$G$ with support $Y'$, which extends $\cH$.
    Moreover,
    \begin{enumerate}
        \item if $H, H^C \in \Forbminor(X)$ for all $C \in \cC(G-Y)$ and some $2$-connected graph $X$, then $H' \in \Forbminor(X)$,
        \item if $\cH$ and $\cH^C$ are honest for every $C \in \cC(G-Y)$, then $\cH'$ is honest,
        \item $\rad_G(V'_h) = \rad_G(V_h)$, if $h \in V(H)$, 
        \item $\rad_G(V'_h) = \rad_G(V^C_h)$, if $h \in V(H^C) \setminus \{h^C\}$ for some $C \in \cC(G-Y)$,
        \item $\irs(\cH',v) = \irs(\cH,v)$ for $v \in V(Y) \setminus \partial_G Y$,
        \item $\irs(\cH',v) = \irs(\cH^C, v)$ for $v \in V(Y^C) \setminus \partial_G Y = V(Y^C \cap C)$ for $C \in \cC(G-Y)$, and 
        \item $\irs(\cH', v) \leq \irs(\cH,v) + 2 \sup_{C \in \cC(G-Y)} \irs(\cH^C,v)$ for $v \in \partial_G Y$. \qed
    \end{enumerate}
\end{observation}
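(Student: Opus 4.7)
The statement is flagged as immediate from the construction, so the plan is a systematic but brisk verification. First I would confirm that $\cH'$ satisfies \cref{GraphDecomp:H1,GraphDecomp:H2}, from which the claim that $\cH'$ is a partial \gd\ of $G$ with support $Y' = Y \cup \bigcup_C Y^C$ will follow. Condition \cref{GraphDecomp:H1} is immediate by splitting $\bigcup_{h \in H'} G[V'_h]$ into the contribution of $H$ and of each $H^C$ and applying \cref{GraphDecomp:H1} to $\cH$ and to each $\cH^C$ separately.

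The crux of the verification of \cref{GraphDecomp:H2}, and also of items (v)--(vii), is to partition $V(Y')$ into three regions according to where a vertex $v$ lives: (a) $v \in V(Y) \setminus \partial_G Y$; (b) $v \in V(Y^C) \setminus N_G(C)$ for exactly one $C$, which forces $v \in V(C)$ by $V(Y^C) \subseteq V(C) \cup N_G(C)$; and (c) $v \in \partial_G Y$. Using that distinct components of $G-Y$ are pairwise non-adjacent and the feasibility identities $V^C_{h^C} = N_G(C) \subseteq V_{g^C}$, one reads off that in case (a), $H'_v = H_v$; in case (b), $H'_v = H^C_v$ (noting $v \notin N_G(C) = V^C_{h^C}$ so $h^C \notin H^C_v$); and in case (c), $H'_v$ is the union of $H_v$ with those $H^C_v$ for which $C$ is adjacent to $v$, glued at the identified nodes $h^C = g^C$, each of which lies in both $H_v$ and $H^C_v$. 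Connectivity of $H'_v$, and the identities in (v), (vi), then follow at once.

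Items (i)--(iv) are then quick bookkeeping. For (i), $H'$ is an iterated $1$-sum of $H$ with the $H^C$ at the vertices $h^C = g^C$, and $\Forbminor(X)$ is closed under $1$-sums whenever $X$ is $2$-connected. For (ii), any edge of $H'$ is either an edge of $H$, where honesty of $\cH$ applies, or an edge of some $H^C$; in the latter case, if the edge is incident to $h^C$ the bag intersection is preserved because $V^C_{h^C} = N_G(C) \subseteq V_{g^C} = V'_{h^C}$, and otherwise honesty of $\cH^C$ applies directly. Items (iii), (iv) are immediate from how $V'_h$ is defined.

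The only genuinely computational step, and the point I expect to be the main obstacle in writing cleanly, is item (vii). For $v \in \partial_G Y$, I would fix a centre $h_0 \in H_v$ realising $\rad_H(H_v) = \irs(\cH, v)$, and for each $h \in H'_v$ bound $\dist_{H'}(h_0, h)$ via the triangle inequality. If $h \in H^C_v$, then $\dist_{H'}(h_0, h) \leq \dist_H(h_0, g^C) + \dist_{H^C}(h^C, h)$, which is at most $\irs(\cH, v) + 2\,\irs(\cH^C, v)$: the first summand uses $g^C \in H_v$, while the second uses $h^C \in H^C_v$ together with the fact that the diameter of a connected graph is at most twice its radius. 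Taking the supremum over $C \in \cC(G-Y)$ yields (vii).
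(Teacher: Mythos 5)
Your verification is correct and is essentially the argument the paper leaves implicit: the observation is stated without proof as immediate from \cref{constr:GraphDecompViaComps}, and your three-region case analysis for \cref{GraphDecomp:H2} and (v)--(vii), together with the $1$-sum structure of $H'$ for (i)--(ii), is exactly the intended routine check. Two cosmetic points only: a centre realising $\rad_H(H_v)$ need not lie in $H_v$ (harmless, since your bound for (vii) only uses $g^C\in H_v$ and the triangle inequality through the centre), and for the stated \emph{equalities} in (v) and (vi) one should additionally note that the identified nodes $h^C=g^C$ are cut vertices of $H'$, so distances between nodes of $H$ (resp.\ of $H^C$) are the same in $H'$ as in $H$ (resp.\ $H^C$); neither affects correctness.
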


A partial \gd\ $\cH$ of $G$ with support $Y$ is \defn{$R$-component-feasible} for $R \in \N$ if for every component $C$ of $G-Y$ there is $h \in V(H)$ with $N_G(C) \subseteq V_h$ and $\rad_G(V_h) \leq R$.
A subgraph $Y$ of~$G$ is \defn{$R$-ball-componental} if for every component $C$ of $G-Y$ there exists a vertex $w$ of $G$ such that $C$ is a component of $G - B_G(w,R)$.

The following lemma shows that we may assume without loss of generality that every $R$-component-feasible partial \gd\ has an $R$-ball-componental support.

\begin{lemma}\label{lem:CompFeasibleToBallComp}
    For every $R \in \N$, every honest $R$-component-feasible partial \gd\ $\cH$ of a graph $G$ with support $Y$ extends to an honest $R$-component-feasible partial \gd\  $\cH'$ of~$G$ modelled on the same decomposition graph with $R$-ball-componental support $Y'$ and $\orw(\cH') \leq \max\{\orw(\cH), R\}$, $\irs(\cH',v) = \irs(\cH,v)$ for $v \in V(Y)$, and $\irs( \cH',v) = 0$ for $v \in V(Y')\setminus V(Y)$.
\end{lemma}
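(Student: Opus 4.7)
The plan is to construct $\cH'$ by enlarging the existing bags of $\cH$ (so $H' = H$), adding to each bag a piece of one specific component of $G-Y$, chosen so as to leave only ``small ball complements'' behind.

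\textbf{Construction.} For every node $h$ of $H$ with $\rad_G(V_h) \leq R$, pick a centre $w(h) \in V(G)$ with $V_h \subseteq B_G(w(h), R)$. By $R$-component-feasibility, for every $C \in \cC(G-Y)$ we may choose a node $h(C) \in V(H)$ with $N_G(C) \subseteq V_{h(C)}$ and $\rad_G(V_{h(C)}) \leq R$. Now set
\begin{equation*}
    V'_h := V_h \cup \bigcup\nolimits_{C \in \cC(G-Y),\, h(C) = h} \bigl(V(C) \cap B_G(w(h), R)\bigr),
\end{equation*}
and let $\cH' := (H, \cV')$ with support $Y' := \bigcup_{h} V'_h$.

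\textbf{Verification.} I first check that $\cH'$ is a partial \gd\ extending $\cH$. Clearly $V_h \subseteq V'_h$ and $Y \subseteq Y'$. For \cref{GraphDecomp:H1} one considers an edge $uv$ of $G[Y']$: if both endpoints lie in $Y$, it is covered by $\cH$; otherwise some endpoint lies in a component $C$ of $G-Y$, and since $N_G(C) \subseteq V_{h(C)} \subseteq V'_{h(C)}$ and any new endpoint is in $V(C) \cap B_G(w(h(C)), R) \subseteq V'_{h(C)}$, the edge is covered by $V'_{h(C)}$. For \cref{GraphDecomp:H2}: vertices of $Y$ are in exactly the same bags as before (so $H'_v = H_v$), and every new vertex $v \in V(Y') \setminus V(Y)$ lies in $V(C)$ for a unique component $C$, hence in the single bag $V'_{h(C)}$; so $H'_v = \{h(C)\}$. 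This simultaneously gives $\irs(\cH', v) = \irs(\cH, v)$ for $v \in V(Y)$ and $\irs(\cH', v) = 0$ for $v \in V(Y') \setminus V(Y)$. Honesty of $\cH'$ follows from honesty of $\cH$ since intersections of bags only grow. For outer-radial width, each $V'_h$ is by construction contained in $B_G(w(h), R)$ if it received any new vertices, and is the unchanged $V_h$ otherwise, yielding $\orw_G(\cH') \leq \max\{\orw_G(\cH), R\}$.

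\textbf{Ball-componentality and component-feasibility.} The heart of the argument is to identify the components of $G-Y'$. Let $C'$ be such a component. Since $Y \subseteq Y'$, $C'$ lies inside a unique component $C$ of $G-Y$, and in fact $V(C') \subseteq V(C) \setminus B_G(w(h(C)), R)$. I claim $C'$ is a component of $G - B_G(w(h(C)), R)$: any path in $G - B_G(w(h(C)), R)$ starting in $C'$ cannot leave $C$, because all edges from $V(C)$ to $V(G) \setminus V(C)$ have their outer endpoint in $N_G(C) \subseteq V_{h(C)} \subseteq B_G(w(h(C)), R)$, which has been deleted. So $Y'$ is $R$-ball-componental. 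The same reasoning shows $N_G(C') \subseteq (V(C) \cap B_G(w(h(C)), R)) \cup N_G(C) \subseteq V'_{h(C)}$, and we already observed $\rad_G(V'_{h(C)}) \leq R$, so $\cH'$ is $R$-component-feasible with $h(C)$ again serving as the witness for $C'$.

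The routine part is bookkeeping; the only real step is the observation that enlarging $V_{h(C)}$ by $V(C) \cap B_G(w(h(C)), R)$ turns the pieces of $G-Y$ into pieces of $G$ minus a single small ball, which is exactly what $R$-ball-componentality demands.
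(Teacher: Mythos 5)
Your construction is identical to the paper's: fix a centre $v_h$ witnessing $\rad_G(V_h)\leq R$, assign each component $C$ of $G-Y$ to a node $h_C$ with $N_G(C)\subseteq V_{h_C}$, and enlarge only $V_{h_C}$ by $V(C)\cap B_G(v_{h_C},R)$, so that the leftover components become components of $G-B_G(v_{h_C},R)$. The verification (bags of old vertices unchanged, new vertices in exactly one bag, radii bounded by $R$, component-feasibility via $h_C$) matches the paper's argument, so the proposal is correct and takes essentially the same approach.
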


\begin{proof}
    For each node $h$ of $H$ with $\rad_G(V_h) \leq R$, we fix a vertex $v_h$ of $G$ such that $V_h \subseteq B_G(v_h, R)$.
    Since $(H, \cV)$ is $R$-component-feasible, we may further fix for each $C \in \cC(G-Y)$, a node $h_C$ of $H$ such that $N_G(C) \subseteq V_{h_C}$ and $\rad_G(V_{h_C}) \leq R$.
    Set $U := V(Y) \cup \bigcup_{C \in \cC(G-Y)} (B_G(v_{h_C},R) \cap V(C))$. It is immediate from the construction that every component $C'$ of $G-U$ is also a component of $G-B_G(v_{h_C}, R)$ where $C$ is the unique component of $G-Y$ with $C' \subseteq C$. Hence, the induced subgraph $Y' := G[U]$ is $R$-ball-componental. 

    It remains to extend $(H, \cV)$ to the desired \gd\ $(H, \cV')$ of $Y'$.
    Set $V'_h := V_h \cup (B_G(v_h, R) \cap (\bigcup_{C \in \cC(G-Y) : h_C = h} V(C)))$ for every $h \in V(H)$.
    Note that, for every $h \in V(H)$ every vertex~$v$ of~$Y$ is contained in $V'_h$ if and only if it is contained in $V_h$, and every vertex $v \in U \setminus V(Y)$ is contained in exactly one $V'_h$.
    Thus, $(H, \cV')$ is indeed a \gd\ of $Y'$, and its outer-radial width and radial spread are as claimed.
    Moreover, by construction, $N_G(C') \subseteq V'_{h_C}$ where $C$ is the unique component of $G-Y$ with $C' \subseteq C$. Hence, $(H, \cV')$ is $R$-component-feasible.
\end{proof}

Recall that our overall goal is to construct, for a given graph $X$ and a graph $G$ without $K$-fat $X$ minor, a graph-decomposition of $G$ along a graph $H$ without $X$ minor. For this, we recursively construct $f(K)$-radial  partial decompositions $(H^d, \cV^d)$ with support $Y^d$, where each $(H^d, \cV^d)$ extends $(H^{d-1}, \cV^{d-1})$ `into the direction' of each component. Combining \cref{lem:LimitGD} and \cref{lem:CompFeasibleToBallComp}, it suffices to consider the case $Y^d = G[B]$ where $B$ is a ball of bounded radius and to `make progress' into the direction of one component of $G-Y^d$. The following result, \cref{thm:CompExtensionLem}, and property~\nameref{tag:star} formalise this.

We say that a graph $X$ has \defn{property \textup{\labelText{($*$)}{tag:star}}} if for every $K \in \N$ there are $R(K)$, $f'_0(K)$, $f'_1(K)$, $f''_1(K) \in \N$ with $R(K) \leq f'_0(K)$ such that, for every ball $B$ of radius $\leq R(K)$ in a graph $G$ with no $K$-fat $X$ minor and for every component $C$ of $G-B$, there exists an honest $R(K)$-component-feasible $(f'_0(K), f'_1(K))$-radial partial \gd\ $\cH^C$ of $G$ with support $Y^C$ modelled on a graph $H^C \in \Forbminor(X)$ such that $N_G(C) \cup \partial_G C \subseteq V(Y^C) \subseteq B_G(C,1)$, the neighbourhood $N_G(C)$ is a bag of $\cH^C$, and $\irs(\cH^C, v) \leq f''_1(K)$ for all $v \in N_G(C)$.

\begin{theorem}\label{thm:CompExtensionLem}    
    If a $2$-connected finite graph $X$ has property \textup{\nameref{tag:star}} (with respect to functions $R$, $f'_0$, $f'_1$, $f''_1$), then \cref{conj:agelospanosGD} holds for $X$ (with respect to $f := (f_0, f_1) := (f'_0, f'_1 + 2\cdot f''_1 + 1)$).
\end{theorem}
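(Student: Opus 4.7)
The plan is to build the desired \gd\ $\cH$ of $G$ as the limit (in the sense of \cref{lem:LimitGD}) of a nested sequence $(\cH^d)_{d \in \N}$ of honest, $R(K)$-ball-componental, $R(K)$-component-feasible partial \gd s of $G$, each modelled on a graph in $\Forbminor(X)$, all with outer-radial width $\leq f'_0(K)$ and inner-radial spread $\leq f'_1(K) + 2 f''_1(K) + 1$. By \cref{obs:Connected} I may assume $G$ is connected, and take $\cH^0$ to be the trivial single-bag partial \gd\ with $V_{h_0} := B_G(v_0, R(K))$ for some fixed $v_0 \in V(G)$; it trivially satisfies all invariants with $H^0 = K_1 \in \Forbminor(X)$.

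Given $\cH^d$, ball-componentality yields for each $C \in \cC(G - Y^d)$ a vertex $w_C$ with $C \in \cC(G - B_G(w_C, R(K)))$, so property \textup{\nameref{tag:star}} applies at the ball $B_G(w_C, R(K))$ and produces a partial \gd\ $\cH^C$. The feasibility conditions of \cref{constr:GraphDecompViaComps} are then exactly witnessed by $R(K)$-component-feasibility of $\cH^d$ (providing $g^C$ with $N_G(C) \subseteq V^d_{g^C}$) and by property \textup{\nameref{tag:star}} (providing $h^C$ with $V^C_{h^C} = N_G(C)$, and guaranteeing $\partial_G C \subseteq V(Y^C) \subseteq B_G(C,1)$), so \cref{constr:GraphDecompViaComps} produces a partial \gd\ $\cH^{d+1/2}$ whose decomposition graph, obtained by iterated $1$-sums of $H^d$ with the various $H^C$, remains in $\Forbminor(X)$ by $2$-connectedness of $X$. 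Applying \cref{lem:CompFeasibleToBallComp} to $\cH^{d+1/2}$ then produces the required $\cH^{d+1}$; its outer-radial width bound is controlled via items (iii)--(iv) of \cref{obs:AboutConstr} and the estimate in \cref{lem:CompFeasibleToBallComp}, using $R(K) \leq f'_0(K)$.

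The main obstacle is controlling the inner-radial spread, because the bound from item (vii) of \cref{obs:AboutConstr} would accumulate at a vertex $v$ if it lay on $\partial_G Y^d$ for many consecutive values of $d$. The crucial observation, however, is that it cannot: if $v \in \partial_G Y^d$, then every neighbour of $v$ outside $Y^d$ lies in some $C \in \cC(G - Y^d)$ and therefore belongs to $\partial_G C \subseteq V(Y^C) \subseteq V(Y^{d+1})$, so all neighbours of $v$ lie in $V(Y^{d+1})$ and, by monotonicity of the supports, $v \notin \partial_G Y^{d'}$ for every $d' > d$. Combining this with items (v)--(vii) of \cref{obs:AboutConstr}, the bounds $\irs(\cH^C) \leq f'_1(K)$ and $\irs(\cH^C, v) \leq f''_1(K)$ for $v \in N_G(C)$ from property \textup{\nameref{tag:star}}, and the fact that \cref{lem:CompFeasibleToBallComp} leaves $\irs$ unchanged at existing vertices (and sets it to $0$ at newly added ones), an induction on $d$ carrying the refined invariant that $\irs(\cH^d, v) \leq f'_1(K)$ whenever $v \in \partial_G Y^d$ yields $\irs(\cH^d, v) \leq f'_1(K) + 2 f''_1(K) + 1$ for every $d$ and every $v \in V(Y^d)$.

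Finally, the inclusion $\partial_G C \subseteq V(Y^C) \subseteq V(Y^{d+1})$ for all $C \in \cC(G - Y^d)$ forces $B_G(V(Y^d), 1) \subseteq V(Y^{d+1})$, hence $V(Y^d) \supseteq B_G(v_0, R(K) + d)$ for every $d \in \N$, and by connectedness of $G$ we have $\bigcup_d V(Y^d) = V(G)$. \cref{lem:LimitGD} then produces the required honest \gd\ of $G$ modelled on a graph $H \in \Forbminor(X)$ with outer-radial width $\leq f'_0(K)$ and inner-radial spread $\leq f'_1(K) + 2 f''_1(K) + 1$, establishing \cref{conj:agelospanosGD} for $X$ with $f = (f'_0, f'_1 + 2 f''_1 + 1)$.
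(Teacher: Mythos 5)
Your proof is correct and follows essentially the same argument as the paper: initialise with a single ball, apply \cref{constr:GraphDecompViaComps} followed by \cref{lem:CompFeasibleToBallComp} to maintain the $(f'_0(K),\,f'_1(K)+2f''_1(K)+1,\,f'_1(K))$-radial, honest, $R(K)$-component-feasible, $R(K)$-ball-componental invariant with decomposition graph in $\Forbminor(X)$, and take the limit. You even make explicit the detail the paper leaves implicit, namely that a vertex $v \in \partial_G Y^d$ has all of its neighbours absorbed into $Y^{d+1}$ (since they lie in some $\partial_G C \subseteq V(Y^C)$) and hence leaves the boundary permanently, which is precisely why the boundary-spread bound $\irs(\cdot,v)\le f'_1(K)$ can be propagated and the spread does not accumulate across iterations.
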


A partial \gd\ $\cH$ of $G$ with support $Y$ is \defn{$(R_0,R_1,R_1')$-radial} in $G$ if it is $(R_0,R_1)$-radial and $\irs(\cH,v) \leq R_1'$ for every $v \in \partial_G Y$.

\begin{proof}
    For $K \in \N$, let $R(K), f'_0(K), f'_1(K), f''_1(K)$ be given by property \nameref{tag:star} of $X$, and let $G$ be any graph with no $K$-fat $X$ minor. By \cref{obs:Connected}, we may assume that $G$ is connected. 
    
    We define a sequence of partial \gd s $(H^d, \cV^d)$ of $G$ with supports $Y^d$ as follows.
    Pick an arbitrary vertex $v$ of $G$ and initialise $Y^0 := G[v, R_0(K)]$. Let $(H^0, \cV^0)$ be the partial \gd\ of $G$ with support $Y^0$ whose decomposition graph $H^0$ is the graph on a single vertex $h$ and whose single bag is $V^0_h := V(Y^0)$. Clearly, $(H^0, \cV^0)$ is honest, $R(K)$-component-feasible and $(f_0(K), 0, 0)$-radial, and $H^0 \in \Forbminor(X)$.

    Now let $d \in \N$ be given, and assume that for $i \leq d$ we have already constructed a sequence of honest, $R(K)$-component-feasible, $(f_0(K), f_1(K), f'_1(K))$-radial partial \gd s $\cH^i := (H^i, \cV^i)$ of~$G$ with $R(K)$-ball-componental support $Y^i$ and with $H^i \in \Forbminor(X)$ such that each $\cH^{i+1}$ extends $\cH^i$. Let further $(H^C, \cV^C)$, for every $C \in \cC(G-Y^d)$, be given by property \nameref{tag:star} of $X$.
    
    Then applying \cref{constr:GraphDecompViaComps} to $\cH^d$ and the $(H^C, \cV^C)$ yields a partial \gd\ $(H', \cV')$ of $G$, which extends $\cH^d$, is honest and $(f_0(K), f_1(K), f'_1(K))$-radial by \cref{obs:AboutConstr}, and such that $H' \in \Forbminor(X)$, since~$X$ is $2$-connected.
    Moreover, $(H', \cV')$ is clearly still $R(K)$-component-feasible, so by \cref{lem:CompFeasibleToBallComp}, $(H', \cV')$ extends to an honest $R(K)$-component-feasible $(f_0(K), f_1(K), f'_1(K))$-radial partial \gd\ $(H^{d+1}, \cV^{d+1})$ of $G$ with $R(K)$-ball-componental support $Y^{d+1}$ and with $H^{d+1} = H' \in \Forbminor(X)$. 

    Now by \cref{lem:LimitGD}, the limit $(H, \cV)$ of the $(H^d, \cV^d)$ is an honest $(f_0(K), f_1(K))$-radial partial \gd\ of $G$ with support $Y := \bigcup_{d \in \N} Y^d$ and with $H \in \Forbminor(X)$. Since $Y^{d+1}$ contains $G[Y^d,1]$ by construction, and because $G$ is connected, we have $Y = G$, and hence $(H, \cV)$ is a \gd\ of~$G$. Thus, $(H, \cV)$ is as desired.
\end{proof}

\section{Proof of \texorpdfstring{\cref{main:Cactus}}{Theorem 3}: \texorpdfstring{\cref{conj:agelospanos}}{Conjecture 1} for \texorpdfstring{$X = K_4^-$}{X = K4-}} \label{sec:Cactus}

In this section we prove \cref{main:Cactus}, which we restate here in the terminology of \gd s.

\begin{customthm}{\cref*{main:Cactus}$^\prime$}
    \label{thm:RadialCactusWidth}
    Every graph $G$ with no $K$-fat $K_4^-$ minor for $K \in \N_{\geq 1}$ admits an honest $(42K+1, 28K+3)$-radial decomposition $(H,\cV)$ modelled on a graph $H$ with no $K_4^-$ minor.
\end{customthm}

\noindent Note that by \cref{lem:GraphDecToQuasiIso,lem:inversequasiisom}, \cref{thm:RadialCactusWidth} immediately implies \cref{main:Cactus}.
\medskip

To prove \cref{thm:RadialCactusWidth}, it suffices by \cref{thm:CompExtensionLem} to show that for every ball $B$ in $G$ of radius $\leq 42K+1$ and every component $C$ of $G-B$ there exists a $(42K+1, 28K+1)$-radial, $(42K+1)$-component-feasible partial decomposition $\cH^C$ of $G$ modelled on a cactus $H^d$ with support $Y^C \subseteq G[C,1]$ such that $\partial_G C \cup N_G(C) \subseteq V(Y^C)$ and $\irs(\cH^C, v) \leq 1$ for all $v \in N_G(C)$. 

For this, we first show that if $G$ has no $K$-fat $K_4^-$ minor, then no component $C$ of $G-B$ contains three vertices in its boundary $\partial_G C$ that are pairwise far apart in $G$ (see \cref{lem:FindingAK4MinusKFatMinor} and \cref{fig:FindingAK4MinusKFatMinor}). It follows that the boundary of every component $C$ of $G-B$ can be partitioned into two sets $N_1, N_2$ of small radius. If $N_1$ and $N_2$ are close in~$G$, then the boundary of $C$ has small radius, and we may obtain the desired partial \gd\ $\cH^C$ of $G$ by letting $H^C$ be a $K_2$ on vertices $h,h'$ and setting $V_h^C := N_G(C), V_{h'}^C := N_G(C) \cup \partial_G C$. Otherwise, the sets $N_1$ and $N_2$ are far apart in $G$. We then show that there is a partial path-decomposition~$\cH^C$ with support $Y^C \subseteq G[C,1]$ as desired `connecting' $N_1$ and $N_2$ such that its first and last bag are its only bags which intersect $B$ (see \cref{lem:PathDecomp} and \cref{fig:PathDecomp:1}).

\begin{lemma}\label{lem:FindingAK4MinusKFatMinor}
    Let $B$ be a ball in a graph $G$ around a vertex $w$, and let $C$ be a component of $G-B$.
    If $\partial_G C$ contains three vertices which are pairwise at least $7K$ apart in $G$ for some $K \in \N_{\geq 1}$, 
    then~$G$ has a $K$-fat $K_4^-$ minor.
\end{lemma}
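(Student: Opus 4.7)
I plan to build, from the three vertices $x_1, x_2, x_3 \in \partial_G C$ with pairwise $G$-distance $\geq 7K$, an explicit $K$-fat model of $K_4^-$ in~$G$. Label the vertices of $K_4^-$ as $a, b, c, d$ so that the non-edge is~$ad$. The four branch sets will cluster around $x_1, x_2, x_3$ and a fourth vertex lying in the middle of a suitable geodesic.

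For each~$i \in \{1,2,3\}$, fix a neighbour $y_i \in N_G(x_i) \cap B$, which exists because $x_i \in \partial_G C$. By the triangle inequality, $d_G(y_i, y_j) \geq 7K - 2$, and from $d_G(y_i, y_j) \leq 2\,\rad_G(B)$ this forces $\rad_G(B) \geq \lceil(7K-2)/2\rceil$, giving us ample room inside~$B$. Pick a $G$-geodesic $P$ from $x_2$ to $x_3$ (of length $\geq 7K$), and take $V_b, V_d, V_c$ to be the three disjoint subpaths of $P$ living, respectively, at positions $[0, 2K]$, $[3K, 4K]$, $[5K, |P|]$ along~$P$. Because $P$ is a $G$-geodesic, any vertex $u \in V_d$ satisfies $d_G(u, x_2) \leq 4K$; combining this with $d_G(x_1, x_2) \geq 7K$ via the triangle inequality gives $d_G(V_d, x_1) \geq 3K$, and analogous bounds hold for the remaining branch-set pairs. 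Let $V_a$ be a connected subgraph containing~$x_1$ and extending, via the edge $x_1 y_1$, along a short path in~$B$ starting at~$y_1$; the two distinguished endpoints of~$V_a$ (its \emph{$x_1$-end} in~$C$ and its \emph{$y_1$-end} in~$B$) will then be at $G$-distance $\geq K$ from each other, while $V_a$ as a whole remains $\geq K$ from $V_b, V_c, V_d$.

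The five branch paths are routed as follows: $E_{bd}$ and $E_{cd}$ along the remaining subpaths of~$P$ (the gaps at positions $(2K, 3K)$ and $(4K, 5K)$); $E_{bc}$ through~$B$ from $V_b$ to $V_c$ via~$y_2$ and~$y_3$; the branch path $E_{ab}$ exits $V_a$ at its $y_1$-end and proceeds through~$B$ to $V_b$, entering at~$x_2$; and the branch path $E_{ac}$ exits $V_a$ at its $x_1$-end and proceeds through~$C$ to~$V_c$, entering at~$x_3$. Because $E_{ab}$ and $E_{ac}$ leave~$V_a$ at the two distinguished endpoints, which are themselves $\geq K$-apart, these two branch paths are $\geq K$-apart in~$G$; the other pairs of branch paths are $\geq K$-apart because of the geodesic structure of~$P$ with its $K$-separated windows $V_b, V_d, V_c$. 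Verifying $K$-fatness is then a matter of checking the finitely many pairwise distances, each of which follows from the $7K$-slack in the hypothesis: any path of length $\geq 7K$ splits as two end-regions of size $\sim 2K$ (absorbed into adjacent branch sets) plus a middle region of length $\geq 3K$ (the branch path).

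The main obstacle is guaranteeing that the $C$-path used for $E_{ac}$ can indeed avoid both $V_b$ and $V_d$, and that~$V_a$ can be chosen disjoint from $V_b, V_d, V_c$. If~$x_1$ happens to lie ``along'' $P$ in a bad position (e.g.\ its natural $C$-route to $V_c$ is forced to cross $V_b \cup V_d$), we re-label by choosing~$P$ to be the $G$-geodesic between a different pair of the~$x_i$'s, permuting the roles of~$a, b, c$ among $x_1, x_2, x_3$ accordingly; because the three $x_i$'s are pairwise at distance~$\geq 7K$, one of the three pairings always places the third $x_i$ far from any bad region of the corresponding geodesic.
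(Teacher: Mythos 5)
The decisive gap is the routing through $B$. Your model places the windows $V_b,V_d,V_c$ on a $G$-geodesic $P$ from $x_2$ to $x_3$ and then routes $E_{bc}$ and $E_{ab}$ (and part of $V_a$) \emph{laterally} through $B$, requiring all of these objects to be pairwise at distance $\geq K$. But $P$ will in general itself run through $B$ (both endpoints are adjacent to $B$), and the hypothesis gives no lower bound on the ``width'' of $B$: a large radius does not create room for a second, $K$-separated crossing. Concretely, let $B$ be a tripod, i.e.\ three geodesic legs of length $r=4K$ emanating from $w$, with the $i$-th leg ending in a vertex adjacent to $x_i$, and let $C$ be one long path $x_2\cdots x_1\cdots x_3$ outside $B$, so that $\dist_G(x_i,x_j)=2r+2\geq 7K$ and $\partial_G C=\{x_1,x_2,x_3\}$. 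Every $x_2$--$x_3$ geodesic runs down leg~$2$, through $w$ and up leg~$3$, so $V_d$ sits on leg~$2$ within distance $K+1$ of $w$; the only $y_2$--$y_3$ path inside the tripod uses the same two legs and passes straight through $V_d$, while any alternative route through $C$ passes through $x_1\in V_a$, so no admissible $E_{bc}$ exists; likewise your $E_{ab}$ through $B$ is forced through $w$ and along leg~$2$, i.e.\ through $V_d$ and within distance $<K$ of $V_c$. By the symmetry of the example, every relabelling of $a,b,c$ among $x_1,x_2,x_3$ fails in the same way, so the closing ``re-label'' fallback does not repair the construction. (A secondary issue: $V_c$ is the entire tail of $P$ from position $5K$ onwards, and $P$ may pass through $x_1$, in which case $V_a$ and $V_c$ are not even disjoint.) Note that this example \emph{does} contain a $K$-fat $K_4^-$ minor, so the lemma is fine; it is the particular recipe that breaks.

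The paper's proof avoids exactly this difficulty by never sending two separated paths across or around the ball: it takes the whole component $C$ as one branch set and the inner ball $B_G(w,r-(3K-1))$ as another, joins them to the rest only by three \emph{radial} geodesics of length $3K$ from the three boundary vertices into the inner ball, uses the middle thirds of two of these geodesics as the remaining two branch sets, and places the missing edge of $K_4^-$ precisely between those two middle segments, so no $V_3$--$V_4$ connection is needed at all. All separations then follow from $\dist_G(u_i,u_j)\geq 7K$ minus twice the path length $3K$, plus the fact that $C$ and the inner ball are $3K$ apart. To salvage your approach you would have to replace the lateral paths $E_{ab},E_{bc}$ by connections via such ``hub'' branch sets, which essentially reproduces the paper's construction.
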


\begin{proof}
    Let $r$ be the radius of the ball $B$ around $w$, and let $u_1, u_2, u_3$ be three vertices in $\partial_G C$ which are pairwise at least $7K$ apart in $G$.
    Then also $\dist_G(u_i, u_j) \leq 2r$ for $i \neq j$ since $B$ has radius $r$, and thus
    \[
        r \geq \dist_G(u_i,u_j)/2 \geq 7K/2 > 3K.
    \]
    Hence, $B_G(w, r-(3K-1))$ is non-empty, so we may pick for every $i \in [3]$ a shortest $v_i$--$B_G(w, r-(3K-1))$ path $Q^i = q^i_0 \dots q^i_{3K}$ in $G$.
    By defining the branch sets as
    \[
    V_1 := V(C),\; V_2 := B_G(w, r-(3K-1)),\; V_3 := V(q^1_{K}Q^1q^1_{2K}),\; V_4 := V(q^2_KQ^2q^2_{2K})
    \]
    and the branch paths as
    \[
    E_{13} := q_{2K}^1Q^1q_{3K}^1,\; E_{14} := q_{2K}^2Q^2q_{3K}^2,\; E_{23} := q^1_0Q^1q^1_{K},\; E_{24} := q_{0}^2Q^2q^2_{K},\; E_{12} := Q^3
    \]
    we obtain a $K$-fat $K_4^-$-model (see \cref{fig:FindingAK4MinusKFatMinor}). Indeed, the sets $V_i$ and the paths $E_{ij}$ obviously form a model of $K_4^-$, so it remains to show that this $K_4^-$-model is $K$-fat.

    \begin{figure}
        \centering
        \pdfOrNot{\def\svgwidth{0.6\columnwidth} 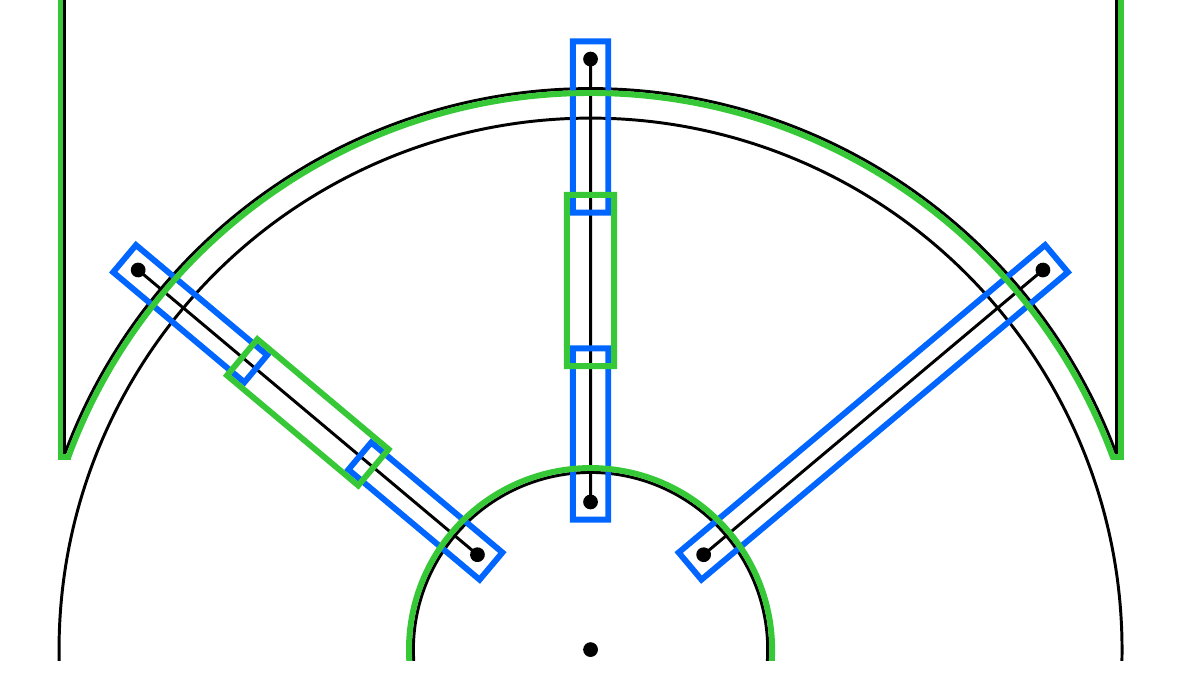}{\includesvg[width=0.6\columnwidth]{svg/k4minus.svg}}
        \caption{The $K$-fat $K_4^-$ minor in the proof of \cref{lem:FindingAK4MinusKFatMinor}.}
        \label{fig:FindingAK4MinusKFatMinor}
    \end{figure}

    Since $G[V_1] = C$ is a component of $G-B_G(w,r)$ and $V_2 = B_G(w, r-(3K-1))$, we have $\dist_G(V_1,V_2) \geq 3K \geq K$.
    In particular, as the $Q^i$ are shortest paths, we find $\dist_G(V_i, E_{(3-i)j}) \geq 2K \geq K$ as well as $\dist_G(E_{1j}, E_{2j}) \geq K$ for $i = 1,2$  and $j = 3,4$.
    By the choice of $V_3$, $V_4$ and the $E_{ij}$ as subsets of the $Q^i$, it remains to show $\dist_G(Q^i, Q^j) \geq K$ for $i \neq j$.
    Indeed, we find
    \[
        \dist_G(Q^i, Q^j) \geq \dist_G(u_i,u_j)-||Q_i||-||Q_j|| \geq 7K - 2 \cdot 3K = K,
    \]
    where the first inequality holds because any $Q^i$--$Q^j$ path extends via $Q^i$ and $Q^j$ to a $u_i$--$u_j$ path.
\end{proof}

For the next lemma, we need the following result, which is immediate from \cite{radialpathwidth}*{Proof of Lemma~4.2}.

\begin{lemma} \label{lem:UnionOfBallIsDecomp}
    Let $G$ be a graph, and let $P$ be a path that is a shortest path in $G$ between its endvertices.
    Given~$r \in \N$ we write~$B_p := B_G(p, r)$ for~$p \in V(P)$ and set
    \begin{equation*}
        V_p := \bigcup_{p' \in B_P(p, r+1)} B_{p'}.
    \end{equation*}
    Then $\cH = (P, (V_p)_{p \in P})$ is a partial graph-decomposition of $G$ with support $G' = G[\bigcup_{p \in P} B_p]$ of outer-radial width at most $2r + 1$ and radial spread at most $2r+1$. Moreover, $V_p \subseteq B_G(p, 2r+1)$ for every $p \in V(P)$.
\end{lemma}

\begin{lemma}\label{lem:PathDecomp}
    Let $B$ be a ball in a graph $G$ around a vertex $w$, and let $C$ be a component of $G-B$ whose boundary $\partial_G C$ contains two vertices that are at least $42K+1$ apart for $K \in \N_{\geq 1}$.

    If $G$ has no $K$-fat $K_4^-$ minor, 
    then there is an induced subgraph $Y$ of $G[C,1]$  which admits an honest $(28K+1, 28K+1)$-radial decomposition $(P, \cV)$ modelled on some path~$P = p_0 \dots p_n$ such that
    \begin{enumerate}
        \item \label{itm:PathDecomp:NhoodInFirstAndLast} $N_G(C) \cup \partial_G C \subseteq V_{p_0} \cup V_{p_n}$, and $N_G(C)$ is disjoint from every $V_p$ with $p \neq p_0, p_n$, and
        \item \label{itm:PathDecomp:Admissable} for every component $C'$ of $G-Y$ which meets, or equivalently is contained in, $C$, its neighbourhood $N_G(C')$ is contained in some bag $V_p$ of $(P, \cV)$.
    \end{enumerate}
\end{lemma}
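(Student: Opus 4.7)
I would begin by invoking \cref{lem:FindingAK4MinusKFatMinor}: since $G$ has no $K$-fat $K_4^-$ minor, any three vertices of $\partial_G C$ have at least one pair at $G$-distance $< 7K$. Combined with $\dist_G(u_1, u_2) \geq 42K+1$, this yields a partition $\partial_G C = A_1 \sqcup A_2$ with $A_i \subseteq B_G(u_i, 7K-1)$, and correspondingly a partition $N_G(C) = M_1 \sqcup M_2$ with $M_i \subseteq B_G(u_i, 7K)$. In particular, $A_1 \cup M_1$ and $A_2 \cup M_2$ are at pairwise $G$-distance at least $28K+1$.

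To construct $(P, \cV)$, I would let $Q = q_0 \dots q_L$ be a shortest $u_1$-$u_2$ path in $G$ (of length $L \geq 42K+1$), apply \cref{lem:UnionOfBallIsDecomp} to $G$ with $Q$ and $r := 14K$ to obtain bags $W_i \subseteq B_G(q_i, 28K+1)$, and then set $V_{p_i} := W_i \cap V(G[C,1])$, additionally removing $N_G(C)$ from every intermediate bag. The support $Y$ is the induced subgraph on $\bigcup_i V_{p_i}$, and the inclusions $A_i \cup M_i \subseteq B_G(u_i, 7K) \subseteq W_0, W_L$ give the first half of (i). The axioms (H1), (H2) and the outer-radial width and inner-radial spread bounds of $28K+1$ descend from \cref{lem:UnionOfBallIsDecomp} via the restriction; the only edges affected by deleting $N_G(C)$ from intermediate bags connect vertices of $M_i$ to neighbours in $A_i$, and both lie in $W_0$ or $W_L$ by the distance bounds. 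Honesty is immediate except where $Q$ passes through $V(B) \setminus N_G(C)$; such stretches are short enough (any longer stretch forces a shortcut contradicting the shortest-path property of $Q$) that consolidating the corresponding consecutive nodes preserves both honesty and the radial bounds.

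The technical core of the proof is condition (ii). For a component $C'$ of $G - Y$ meeting $C$, the inclusion $C' \subseteq V(C)$ follows because $N_G(C) \subseteq V(Y)$ separates $C$ from the rest of $G$, and furthermore $N_G(C') \cap N_G(C) = \emptyset$ because any vertex of the latter intersection would force a neighbour $c \in C' \cap \partial_G C \subseteq B_G(u_1, 7K) \cup B_G(u_2, 7K)$, contradicting $c \notin B_G(Q, r)$ for $r \geq 7K$. It therefore suffices to bound the spread of $N_G(C')$ along $Q$: if all projections onto $Q$ of vertices of $N_G(C')$ lie in a subpath of length at most $2(r+1) = 28K+2$, then $N_G(C') \subseteq W_h$ for the midpoint $h$, yielding (ii). The hard part of the argument is to establish this spread bound via the no-$K$-fat-$K_4^-$-minor hypothesis: supposing it fails, with $v, v' \in N_G(C')$ projecting to $q_i, q_j$ for $|i-j|$ too large, one would construct a $K$-fat $K_4^-$ minor from the theta-graph formed by $Q[q_i, q_j]$ together with the bypass $v \to C' \to v'$, combined with the end-segments $Q[q_0, q_i]$ and $Q[q_j, q_L]$. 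Carefully choosing branch sets near $u_1$, $u_2$, inside the middle of $Q[q_i,q_j]$, and inside $C'$, and verifying that all five branch paths are pairwise at $G$-distance at least $K$ — an accounting driven by the shortest-path property of $Q$ and the choice $r = 14K$ — is the delicate and most technical step of the proof.
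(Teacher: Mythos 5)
There is a genuine gap, and it is located at the very first step of your construction: you take $Q$ to be a shortest $u_1$--$u_2$ path \emph{in $G$}, whereas the sweep only works if the path runs through $C$ itself. A shortest $u_1$--$u_2$ path in $G$ may have all its internal vertices in $B$, and then $B_G(Q,14K)$ does not cover $C$ at all beyond two short stubs near $u_1$ and $u_2$; the large middle part of $C$ becomes a single component $C'$ of $G-Y$ whose neighbourhood consists of two vertices far apart, so \cref{itm:PathDecomp:Admissable} fails and no bag of the prescribed radius can contain $N_G(C')$. Crucially, this failure is not one you can convert into a $K$-fat $K_4^-$ minor: take $G$ to be a single long cycle, $B=B_G(w,r)$ one long arc, and $C$ the complementary arc with $\partial_G C=\{u_1,u_2\}$ and $\dist_G(u_1,u_2)=2r+2\ge 42K+1$, realised through $B$, while the arc $C$ is much longer. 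Here your $Q$ runs through $w$, the middle of $C$ is uncovered, its two attachment vertices are more than $2(28K+1)$ apart, and $G$ (a cycle) has no $K_4^-$ minor whatsoever, fat or not, so the "theta-graph $\Rightarrow$ fat $K_4^-$" endgame you sketch has nothing to bite on. The same example refutes your repair claim that stretches of $Q$ inside $V(B)\setminus N_G(C)$ must be short "since a longer stretch forces a shortcut": in the cycle the stretch through $B$ has length $2r$ and $Q$ is genuinely shortest; and consolidating that stretch into one node produces a bag violating the radius bound $28K+1$.

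The paper avoids this by first using \cref{lem:FindingAK4MinusKFatMinor} (as you do) to get $\partial_G C=N_1\cup N_2$ with $N_i\subseteq B_G(v_i,7K-1)$, but then choosing $P$ as a shortest \emph{$N_1$--$N_2$} path in the auxiliary graph $G'=G[B_1\cup V(C)\cup B_2]$, where $B_i=B_G(v_i,21K-1)$; since $B_1,B_2$ are disjoint and non-adjacent, any such path is forced to lie inside $C$, and $B_G(P,14K)\subseteq V(G')$ makes \cref{lem:UnionOfBallIsDecomp} applicable with $G$-balls. Your construction would have to be re-based on such a path through $C$; note also that the subsequent verification of \cref{itm:PathDecomp:Admissable} in the paper is not a direct theta-to-$K_4^-$ construction but a reduction to \cref{lem:FindingAK4MinusKFatMinor} applied to a new ball $B'$ centred on $P$, using shortest paths $W^1,W^2$ back to $w$ through $B$ to certify that the three relevant far-apart vertices lie on the boundary of a common component of $G-B'$ -- the presence of the original ball $B$ as a "return route" is exactly what is exploited there, and it is the ingredient missing from your sketch.
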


\begin{figure}
    \centering
    \pdfOrNot{\def\svgwidth{0.4\columnwidth} 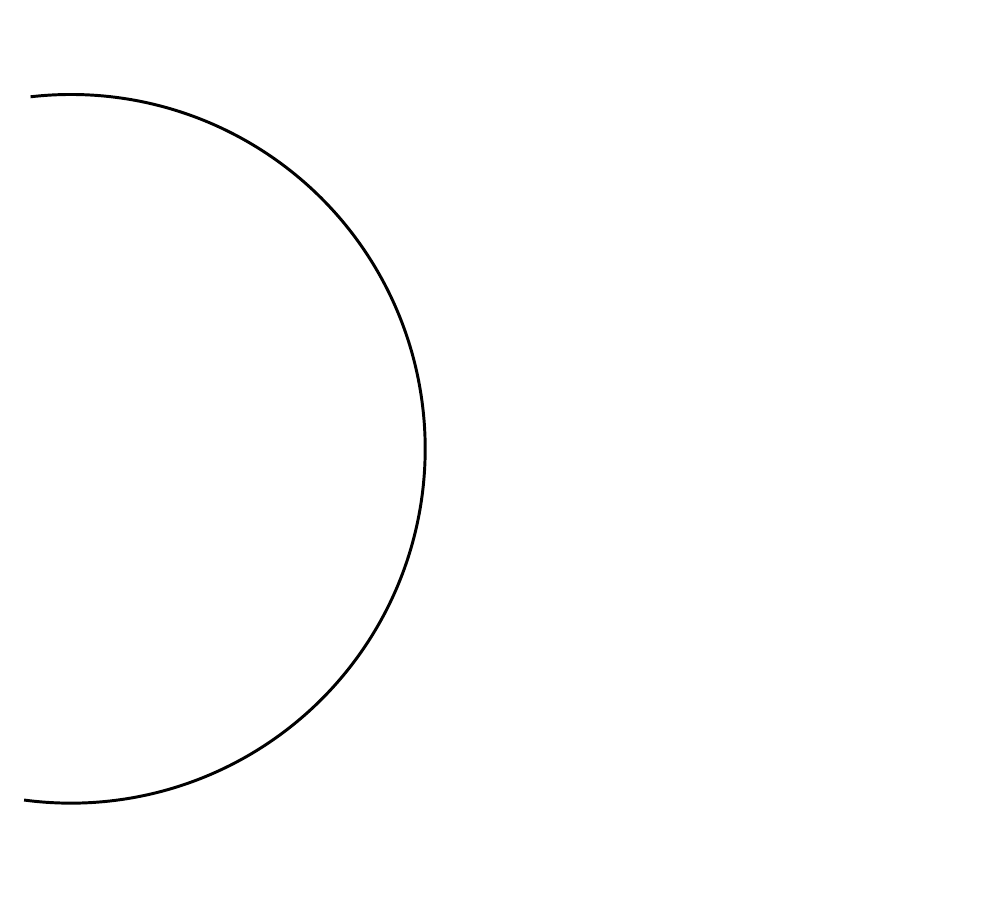}{\includesvg[width=0.4\columnwidth]{svg/pathdecomp.svg}}
    \caption{Depicted in green is the path-decomposition $(P,\cV)$ from \cref{lem:PathDecomp} along the path $P$ which is depicted in blue. By \cref{itm:PathDecomp:Admissable}, the neighbourhood of every component $C'$ of $C-Y$ is contained in some bag in $\cV$.}
    \label{fig:PathDecomp:1}
\end{figure}

\begin{proof}
    Let us assume that $G$ has no $K$-fat $K_4^-$ minor and show that $G$ then has a partial path-decomposition as claimed.
    Let $v_1, v_2$ be two vertices in $\partial_G C$ which are at least $42K+1$ apart.
    Then the balls $B_1$ and $B_2$ in~$G$ of radius $21K-1 \leq \lfloor \dist_G(v_1,v_2)/2 \rfloor - 1$ around $v_1$ and $v_2$, respectively, are disjoint and also there is no $B_1$--$B_2$ edge in $G$.
    In particular, the sets $N_i := B_G(v_i, 7K-1) \cap \partial_G C \subseteq B_i$ for $i \in [2]$ are at least $28K+3$ apart because for all $n_1 \in N_1, n_2 \in N_2$ we have
    \begin{align} \label{eq:distanceN1N2}
    \dist_G(n_1,n_2) \geq \dist_G(v_1,v_2) - \dist_G(n_1,v_1) - \dist_G(v_2,n_2) \geq  (42K+1) - 2 \cdot (7K-1) = 28K+3.
    \end{align}
    Further, since $G$ has no $K$-fat $K_4^-$ minor, \cref{lem:FindingAK4MinusKFatMinor} yields $N_1 \cup N_2 = \partial_G C$.
    
    Let $P = p_0 \dots p_n$ be a shortest $N_1$--$N_2$ path in $G' := G[B_1 \cup V(C) \cup B_2]$.
    Since $B_1$ and $B_2$ are disjoint and not joined by an edge, $N_i$, for $i \in [2]$, separates $B_i \setminus V(C)$ and $N_{3-i} \subseteq V(C)$ in $G'$.
    Hence, every $N_1$--$N_2$ path in $G'$, in particular $P$, is contained in $C$.
    Moreover, for $r_P := 14K$, we have 
    \begin{align}
    \label{eq:ballofcomponentinG'}
    B_G(P, r_P) \subseteq B_G(C, r_P) = B_G(N_1, r_P) \cup  V(C) \cup B_G(N_2, r_P)  \subseteq B_1 \cup V(C) \cup B_2 = V(G'),
    \end{align}
    where we used that $N_1 \cup N_2 = \partial_G C$, $N_i \subseteq B_G(v_i, 7K-1)$ and $B_i = B_G(v_i, 21K-1)$.
    In particular, 
    \begin{equation} \label{eq:PathDecomp:BallsAreEqual}
    B_{G'}(p,r_P) = B_G(p,r_P)
    \end{equation}
    for all $p \in P$.
    So by \cref{lem:UnionOfBallIsDecomp}, the pair $(P, \cV')$ given by 
    \[
    V'_{p} := \bigcup_{p' \in B_P(p, r_P+1)} B_G(p', r_P)
    \]
    is an honest \gd\ of $G'[P, r_P] = G[P, r_P]$ of radial width at most $2 r_P +1 = 28K +1$ and radial spread at most $2 r_P + 1 = 28K +1$.
    Moreover, $\dist_G(p, v) \leq 28K + 1$ for all  $p \in V(P)$ and $v \in V'_p$.

    Now set $Y := G[P,r_P] \cap G[C,1]$.
    As we will see in a moment, the restriction of $(P, \cV')$ to $Y$ has already all desired properties, except that it does not satisfy the second part of~\cref{itm:PathDecomp:NhoodInFirstAndLast}.
    To solve this problem, we adapt $(P, \cV')$ by deleting $N_G(C)$ from all bags $V'_p$ of internal nodes $p \neq p_0, p_n$ of~$P$.
    Formally, define the pair $(P, \cV)$ by $V_p := (V'_p \cap V(Y)) \setminus N_G(C)$ for $p \neq p_0, p_n$ and $V_p := V'_p \cap V(Y)$ for $p = p_0,p_n$.
    We claim that $(P, \cV)$ and $Y$ are as desired.

    We first prove that $(P, \cV)$ is indeed a \gd\ of $Y$. Since $(P, \cV')$ is a \gd\ of $G[P,r_P]$, its restriction to $Y \subseteq G[P, r_P]$ is a \gd\ of $Y$. Thus, it suffices to show that every vertex $v \in N_G(C)$ is contained in precisely one of the bags $V_{p_0}', V_{p_n}'$ and that this bag also contains all neighbours of $v$.

    Since $p_0 \in N_1$, we have $N_1 \subseteq B_G(p_0, 7K-1)$. Otherwise, there is some $u \in N_1$ with $\dist_G(u, p_0) \geq 7K$. Since also $\dist_G(u,v_{2}), \dist_G(p_0,v_{2}) \geq \dist_G(N_1, N_2) \geq 7K$ by \cref{eq:distanceN1N2}, \cref{lem:FindingAK4MinusKFatMinor} applied to $u,p_0,v_2 \in \partial_G C$ yields a $K$-fat $K_4^-$ minor in $G$, which is a contradiction. Analogously, we find $N_2 \subseteq B_G(p_n, 7K-1)$.
    Since $\dist_G(p_0, N_2), \dist_G(p_n, N_1) \geq 28K+3$ by \cref{eq:distanceN1N2} but $\dist_G(p,v) \leq 28K+1$ for all $p \in V(P)$ and $v \in V_p'$, the set $B_G(N_1,1)$ is disjoint from $V_{p_n}'$, and $B_G(N_2,1)$ is disjoint from $V_{p_0}'$.

    Further, since $(P, \cV')$ has outer-radial width and radial spread at most $28K+1$, so does $(P, \cV)$.
    It remains to show that $(P, \cV)$ satisfies \cref{itm:PathDecomp:Admissable}.
    Suppose for a contradiction that \cref{itm:PathDecomp:Admissable} does not hold, that is, there is a component $C'$ of $G-Y$ which is contained in $C$ and which contains $z_{1} \in N_G(B_G(p_{j_1},r_P))$ and $z_{2} \in N_G(B_G(p_{j_2},r_P))$ with $0 \leq j_1 \leq j_2 \leq n$ such that $j_2 - j_1 > 2 \cdot (r_P + 1)$.

    \begin{figure}[ht]
        \centering
        \begin{subfigure}{0.45\linewidth}
                \centering
                \pdfOrNot{\def\svgwidth{\columnwidth} 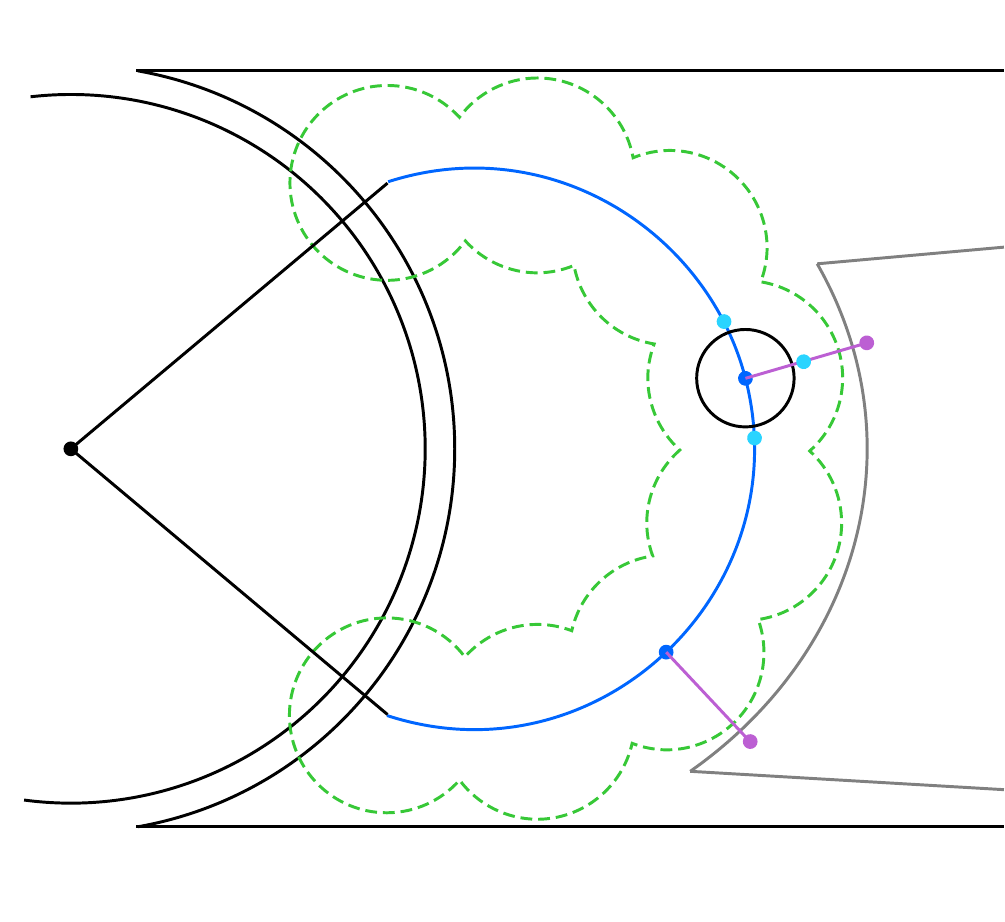}{\includesvg[width=\columnwidth]{svg/insidecomp.svg}}
            \caption{Case 1: $j_1 \geq 7K$ and thus $u_1 \in V(P)$}
            \label{fig:PathDecomp:2a}
        \end{subfigure}
        \hspace{8mm}
        \begin{subfigure}{0.45\linewidth}
            \centering
            \pdfOrNot{\def\svgwidth{0.78\columnwidth} 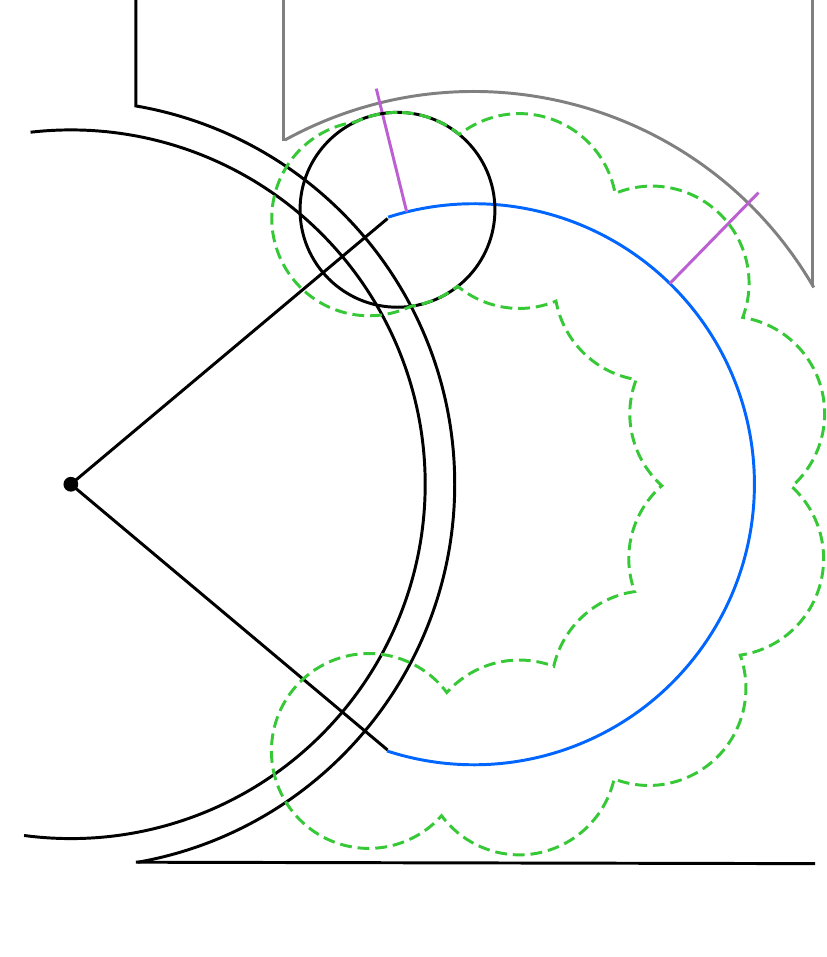}{\includesvg[width=0.78\columnwidth]{svg/closecomp.svg}}
            \caption{Case 2: $j_1 < 7K$ and thus $u_1 \in V(W_1)$}
            \label{fig:PathDecomp:2b}
        \end{subfigure}
        \caption{The situation in the proof of \cref{lem:PathDecomp} if $(P, \cV)$ does not satisfy \cref{itm:PathDecomp:Admissable}.}
        \label{fig:ifpathdecompisnotadmissable}
    \end{figure}

    To derive a contradiction, we find a $K$-fat $K_4^-$ minor in $G$ as follows. Let $Q^i = q^i_0 \dots q^i_{r_P+1}$ be a shortest $p_{j_i}$--$z_i$ path in $G$ and let $W^i = w^i_0 \dots w^i_{r+1}$ be a shortest $w$--$v_i'$ path in $G$ for $v_1' = p_0$ and $v_2'= p_n$. Note that by definition of $z_i$ and $p_{j_i}$, the $Q^i$ have length $r_P + 1$ and are hence shortest $P$--$C'$ paths in $G$. Further, recall 
    that the radius $r$ of the ball $B$ is at least $\dist_G(v_1, v_2) /2 \geq 14K+1$.
    
    Now first assume that $j_1 \geq 7K$ (see \cref{fig:PathDecomp:2a}). Set $B' := B_G(p_{j_1}, 7K)$, and consider $u_1 := p_{j_1-7K-1}$, $u_2 := p_{j_1+7K+1}$ and $u_3 := q^1_{7K+1}$.
    Since $P$ is a shortest path in $G'$ and $Q^1$ is also a shortest $P$--$C'$ path in $G$, it follows from \cref{eq:PathDecomp:BallsAreEqual} for $p_{j_1}$ that $u_1, u_2, u_3 \in N_G(B')$.
    A similar reasoning applied to $u_1$ and $u_2$ yields that the $u_i$ have pairwise distance at least $7K$.
    We claim that $u_1, u_2, u_3$ lie in the same component of $G-B'$, which by \cref{lem:FindingAK4MinusKFatMinor} yields a $K$-fat $K_4^-$ minor in $G$ -- a contradiction.
    Indeed, $Z := p_0Pu_1 \cup W^1 \cup W^2 \cup u_2Pp_n \cup Q^2 \cup C' \cup u_3Q^1z_1$ is connected and contains $u_1, u_2, u_3$.
    Moreover, it follows from a similar reasoning as above together with $j_2 - j_1 > 2 (r_P+1)$, also $Q^2$ being a shortest $P$--$C'$ path in $G$ that $Z$ is disjoint from~$B'$, and $W^1,W^2 \subseteq G[B,1]$.

    Second, assume that $j_1 < 7K$ (see \cref{fig:PathDecomp:2b}). Set $B' := B_G(p_{j_1}, 14K)$, and consider $u_1 := w^1_{r-14K+j_1}$, $u_2 := p_{j_1+14K+1}$ and $u_3 := z_1$. As above, $u_1, u_2, u_3 \in N_G(B')$ and the $u_i$ have pairwise distance at least $7K$.
    Again, we obtain a contradiction from \cref{lem:FindingAK4MinusKFatMinor}, since $u_1, u_2, u_3$ lie in the same component of $G_B'$. Indeed, $Z := W^1u_1 \cup W^2 \cup u_2Pp_n \cup Q^2 \cup C'$ is connected and contains $u_1, u_2, u_3$, and 
    one may follow a similar reasoning as above together with $\dist_G(p_0, p_n) > 2 \cdot (14K+1)$ by \cref{eq:distanceN1N2} to show that $Z$ is also disjoint from $B'$.
\end{proof}

We are now in a position to prove \cref{thm:RadialCactusWidth}. For this, by \cref{thm:CompExtensionLem}, it suffices to show that~$K_4^-$ has property \nameref{tag:star}.

\begin{lemma}\label{lem:K_4^-SatisfiesPartialExtension}
    $K_4^-$ has property \textup{\nameref{tag:star}} (with respect to $R(K) := f'_0(K) := 42K+1$, $f'_1(K) := 28K+1$ and $f''_1(K) = 1$).
\end{lemma}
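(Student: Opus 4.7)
The plan is to split into two cases according to the distances within $\partial_G C$, applying \cref{lem:PathDecomp} when two vertices of $\partial_G C$ are far apart and giving a direct $K_2$-decomposition when all pairs are close together.

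\textbf{Case 1: $\partial_G C$ contains two vertices at distance $\geq 42K+1$.} Then \cref{lem:PathDecomp} yields an honest $(28K+1, 28K+1)$-radial decomposition $(P, \cV')$ of some $Y^C \subseteq G[C, 1]$ modelled on a path $P = p_0 \dots p_n$ such that $N_G(C) \cup \partial_G C \subseteq V'_{p_0} \cup V'_{p_n}$, $N_G(C)$ is disjoint from every internal bag, and the neighbourhood of each component of $G - Y^C$ inside $C$ is contained in some bag of $(P, \cV')$. I extend this by adding a new vertex $h$ adjacent to both $p_0$ and $p_n$, so that $H^C$ becomes a cycle (hence a cactus, hence in $\Forbminor(K_4^-)$), and setting $V_h := N_G(C)$ while keeping the other bags. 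Most verifications are routine; the less-immediate ones are: \cref{GraphDecomp:H2} for $v \in N_G(C)$, where $H^C_v \subseteq \{h, p_0, p_n\}$ is connected via $h$; honesty of $hp_0$ and $hp_n$, which uses that $V'_{p_0}, V'_{p_n}$ each meet $N_G(C)$ (as $p_0, p_n \in \partial_G C$ have neighbours in $N_G(C)$ which lie in the radius-$\geq 1$ balls defining these bags); the bound $\rad_G(V_h) \leq 42K+1$ from $N_G(C) \subseteq V(B)$; and $\irs(\cH^C, v) \leq 1$ for $v \in N_G(C)$, immediate from the cycle structure.

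\textbf{Case 2: All pairs in $\partial_G C$ are at distance $\leq 42K$.} I take $H^C := K_2$ on vertices $h, h'$ with $V_h := N_G(C)$, $V_{h'} := N_G(C) \cup \partial_G C$, and $Y^C := G[N_G(C) \cup \partial_G C]$. Then $K_2$ is a tree (so in $\Forbminor(K_4^-)$), honesty is immediate from $V_h \subseteq V_{h'}$, and $\irs(\cH^C) \leq 1$. For the radial-width bound, fix any $v_0 \in \partial_G C$: every other vertex of $\partial_G C$ is within $42K$ of $v_0$ by hypothesis, and every vertex of $N_G(C)$ is within one step of $\partial_G C$, hence within $42K + 1$ of $v_0$.

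The main obstacle I foresee is $R(K)$-component-feasibility in Case 1 for components of $G - Y^C$ lying \emph{outside} $V(C)$, which are not addressed by \cref{itm:PathDecomp:Admissable}. The key observation is that $N_G(C) \cup \partial_G C \subseteq V(Y^C)$ separates $V(C) \setminus \partial_G C$ from $V(G) \setminus (V(C) \cup N_G(C))$ in $G$, so every component $C'$ of $G - Y^C$ is either contained in $V(C)$ (and \cref{itm:PathDecomp:Admissable} provides the required bag) or disjoint from $V(C)$ (in which case $N_G(C') \subseteq V(Y^C) \setminus V(C) = N_G(C) = V_h$). The same dichotomy resolves component-feasibility in Case 2.
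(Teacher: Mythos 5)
Your proof is correct and takes essentially the same route as the paper: split on whether $\partial_G C$ contains two vertices at distance at least $42K+1$; if so, apply \cref{lem:PathDecomp} and close the path $P$ into a cycle by adding a node $h$ with $V_h := N_G(C)$ adjacent to $p_0$ and $p_n$; if not, use the $K_2$-decomposition with bags $N_G(C)$ and $N_G(C) \cup \partial_G C$. Your treatment of honesty of the new edges and of component-feasibility for the components of $G - Y^C$ that do not meet $C$ is in fact more explicit than the paper's, which leaves those points implicit.
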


\begin{proof}
    Let $K \in \N_{\geq 1}$, and let $G$ be a graph with no $K$-fat $K_4^-$ minor.
    Let $B$ be a ball in $G$ of radius $\leq 42K+1$, and let $C$ be a component of $G-B$.

    If every two vertices in $\partial_G C$ are at most $42K$ apart, then the desired honest $(42K+1)$-component-feasible $(42K+1,1)$-radial \gd\ $(H^C, \cV^C) := (H,\cV)$ of $Y^C := Y := G[N_G(C) \cup \partial_G C]$ is given by defining $H$ as a $K_2$ on two vertices $h, h'$ and setting $V_{h} := N_G(C), V_{h'} := N_G(C) \cup \partial_G C$.
    
    Otherwise, we apply \cref{lem:PathDecomp} to the ball $B$ and the component $C$ to obtain an honest decomposition $(P, \cV)$ of an induced subgraph $Y$ of $G[C,1]$, modelled on some path $P = p_0 \dots p_n$ such that $(P, \cV)$ has outer-radial width at most $28K+1$, has radial spread at most $28K+1$, and satisfies \cref{itm:PathDecomp:NhoodInFirstAndLast} and \cref{itm:PathDecomp:Admissable}.
    We obtain the desired honest $(28K+1)$-component-feasible $(28K+1, 28K+1)$-radial \gd\ $(H^C, \cV^C) := (H, \cV)$ of $Y^C := Y$ by adding a node $h$ and edges $hp_0, hp_n$ to $P$ and setting $V_{h} := N_G(C)$. Indeed,~\cref{itm:PathDecomp:NhoodInFirstAndLast} ensures that $(H, \cV)$ is an (honest) \gd\ of $Y$ and the radial spread of each vertex in $N_G(C)$ in $(H, \cV)$ is at most $1$.
    The radial spread of every other vertex in $Y$ did not change, that is, it is still at most $28K+1$.
    Since $V_h = N_G(C) \subseteq B$, we have $\rad_G(V_h) \leq \rad_G(B) \leq 42K+1$, and thus $(H, \cV)$ has outer-radial width at most $42K+1$.
\end{proof}

\begin{proof}[Proof of \cref{thm:RadialCactusWidth}]
     By \cref{thm:CompExtensionLem}, \cref{lem:K_4^-SatisfiesPartialExtension} immediately yields \cref{thm:RadialCactusWidth}.
\end{proof}

\begin{proof}[Proof of \cref{main:Cactus}]
    Let $G$ be a graph with no $K$-fat $K_4^-$ minor.
    By \cref{lem:GraphDecToQuasiIso}, \cref{thm:RadialCactusWidth} yields that there exists a graph $H$ with no $K_4^-$ minor which is $(84K+2, 84K+2)$-quasi-isometric to $G$.
    Hence, it follows from \cref{lem:inversequasiisom} that $G$ is $(84K+2, 3 \cdot (84K+2)^2)$-quasi-isometric to $H$.
    Thus, \cref{main:Cactus} holds with $f(K) := (84K+2, 3 \cdot (84K+2)^2)$.
\end{proof}

\section{Proof of \texorpdfstring{\cref{main:FatK4}}{Theorem 2}: \texorpdfstring{\cref{conj:agelospanos}}{Conjecture 1} for \texorpdfstring{$X = K_4$}{X = K4}} \label{sec:SeriesParallel}

In this section we prove \cref{main:FatK4}, which we restate here in the terminology of \gd s.

\begin{customthm}{\cref*{main:FatK4}$^\prime$} \label{thm:RadialSeriesParallelWidth}
    Every graph $G$ with no $K$-fat $K_4$ minor for $K \in \N_{\geq 1}$ admits an honest $(25235K + 71, 22)$-radial decomposition $(H, \cV)$ modelled on a graph $H$ with no $K_4$ minor.
\end{customthm}

\noindent Note that by \cref{lem:GraphDecToQuasiIso,lem:inversequasiisom}, \cref{thm:RadialSeriesParallelWidth} immediately implies \cref{main:FatK4}.
\medskip

\defn{Two-terminal graphs} are graphs with two distinguished (not necessarily distinct) vertices $h_1,h_2$, its \defn{terminals}, which we refer to as \defn{source} and \defn{sink}. We denote by $\cH_{SP}$ the class of all two-terminal graphs $H$ with terminals $h_1,h_2 \in V(H)$ that satisfy $H + h_1h_2 \in \Forbminor(K_4)$.\footnote{Note that we allow $h_1 = h_2$, in which case $h_1h_2$ is a loop. Thus, we have for two-terminal graphs $H$ with $h_1 = h_2$ that $H \in \cH_{SP}$ if and only if $H \in \Forbminor(K_4)$.}
We remark that the finite $2$-connected graphs in $\cH_{SP}$ are precisely the $2$-connected series-parallel graphs.

A \defn{parallel composition} of two-terminal graphs is obtained by identifying the sources and identifying the sinks. 
A \defn{series composition} of a pair of two-terminal graphs is obtained by identifying the sink of one of them with the source of the other one.
Since $K_4$ is $3$-connected, the following facts about $\cH_{SP}$ follow easily.

\begin{proposition}\label{obs:ConstrOfSPGraphs}
    $\cH_{SP}$ is closed under the following operations:
    \begin{itemize}
        \item  parallel and series composition,
        \item  subdividing edges,
        \item  adding a path of length at least $2$ between any two adjacent vertices, and
        \item (infinite) union over graphs $H_0 \subseteq H_1 \subseteq \ldots \in \cH_{SP}$.
    \end{itemize}
\end{proposition}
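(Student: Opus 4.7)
The plan is to derive all four closures from two standard facts about $\Forbminor(K_4)$: (i) since $K_4$ is $3$-connected, $\Forbminor(K_4)$ is closed under $2$-sums, i.e.\ under gluing two graphs along a shared edge; and (ii) since $K_4$ has maximum degree $3$, membership in $\Forbminor(K_4)$ is preserved under subdivisions and under adding or deleting parallel edges.

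For the parallel composition $P$ of $H, H' \in \cH_{SP}$, the graph $P + h_1h_2$ is precisely the $2$-sum of $H + h_1h_2$ and $H' + h_1'h_2'$ identifying their distinguished edges, so (i) yields $P + h_1h_2 \in \Forbminor(K_4)$. For the series composition $S$ of $H, H'$ with $h_2 = h_1'$, I would embed $S + h_1h_2'$ into a $K_4$-minor-free graph $B$ built by two nested $2$-sums: first $2$-sum $H + h_1h_2$ with the triangle on $\{h_1, h_2, w\}$ (with $w$ a fresh vertex) along the edge $h_1 h_2$ to obtain $A \in \Forbminor(K_4)$ in which $w$ is adjacent to both terminals; then $2$-sum $A$ with $H' + h_1'h_2'$ along the edges $wh_2 \in A$ and $h_1'h_2'$, identifying $w = h_2'$ and $h_2 = h_1'$. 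By (i) applied twice $B \in \Forbminor(K_4)$, and $B$ contains $S$ together with a triangle on $\{h_1, h_2, h_2'\}$, so in particular $S + h_1h_2' \subseteq B$. For subdividing an edge $e \in E(H)$, the resulting graph $H' + h_1h_2$ is a (multigraph) subdivision of $H + h_1h_2$ -- in the case $e = h_1h_2 \in E(H)$, one of the two parallel $h_1h_2$-edges present in $H + h_1h_2$ is subdivided -- so (ii) applies. Adding a path of length at least $2$ between adjacent vertices $u, v$ of $H$ amounts to first adding a parallel $uv$-edge (which is already present in $H + h_1h_2$, so this step is invisible to $K_4$-minors by (ii)) and then subdividing that new edge; both steps preserve $\Forbminor(K_4)$.

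For the infinite union $H = \bigcup_i H_i$ along an ascending chain in $\cH_{SP}$ with fixed terminals, a standard compactness argument applies: any hypothetical $K_4$-model in $H + h_1h_2 = \bigcup_i (H_i + h_1h_2)$ is witnessed by a finite subgraph (choose one representative in each branch set and a finite subtree of the branch set joining it to the endpoints of the incident branch paths), and this witness lies in some $H_N + h_1h_2$, contradicting $H_N \in \cH_{SP}$. The main technicality I expect is verifying the precise variant of $2$-sum closure used here (keeping the shared edge and allowing parallel edges to appear after identification), but this follows from the standard $2$-sum closure combined with (ii). Degenerate cases such as $h_1 = h_2$ (where $h_1h_2$ becomes a loop) or trivial components cause no difficulty, since loops and parallel edges are invisible to simple-graph minors.
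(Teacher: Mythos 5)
Your proposal is correct and follows essentially the same route as the paper, which disposes of the first three bullets by appealing to the $3$-connectivity of $K_4$ (your $2$-sum/triangle constructions and the max-degree-$3$ subdivision fact are exactly the standard way to make that "follows easily" precise) and of the fourth by the same finite-witness compactness argument for models of the finite graph $K_4$. The only detail worth keeping an eye on is the degenerate case $h_1=h_2$ (where the compositions become $1$-sums), but as you note this is immediate since $K_4$ is in particular $2$-connected.
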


\begin{proof}
    Since $K_4$ is $3$-connected, the first three assertions follow easily. For the fourth assertion, note that since $K_4$ is finite, $H + h_1h_2$, with $H := \bigcup_{i \in \N} H_i$, contains a model of $K_4$ if and only if it contains one with finite branch sets. By the definition of $H$ as the union of the $H^i \in \cH_{SP}$, this implies that $H+h_1h_2$ has no $K_4$ minor, and hence $H \in \cH_{SP}$. 
\end{proof}

Throughout this section we fix 
    \begin{align*}
        R_0(K) &:= 129\cdot 5K + 5K = 130 \cdot 5K,\\ 
        R'_0(K) &:= 3R_0(K) + 5K + 1,\\ 
        \ell(K) &:= 2R_0(K) + 5K + 2,\\ 
        R_1(K) &:= 4\cdot (2(\ell(K) +22K+1) + 11K +2), \text{ and}\\ 
        R_2(K) &:= 2R_1(K) + 5K + 3. 
    \end{align*}

Let us briefly sketch the proof of \cref{thm:RadialSeriesParallelWidth}. For this, let $K \in \N_{\geq 1}$, and let $G$ be a graph with no $K$-fat $K_4$ minor. By \cref{thm:CompExtensionLem} (with $R(K) := R_2(K)$, $f'_0(K) := R_2(K) + 2R'_0(K)+2$ and $f'_1(K) = f''_1(K) := 7$), and because $R_2(K) + 2R'_0(K) + 2 = 25235K + 71$ and $7 + 2\cdot 7 + 1 = 22$, it suffices to show that $K_4$ satisfies property \textup{\nameref{tag:star}}, that is for every ball $B$ in $G$ of radius $\leq R_2(K)$ and every component $C$ of $G-B$ there exists a $R_2(K)$-component-feasible, $(R_2(K)+2R'_0(K)+2, 7)$-radial partial decomposition $\cH^C$ of $G$ modelled on a graph $H \in \Forbminor(K_4)$ with support $Y^C \subseteq G[C,1]$ such that $\partial_G C \subseteq V(Y^C)$ (see \cref{lem:K_4SatisfiesPartialExtension}). So let such a ball $B$ and component $C$ of $G-B$ be given.
To define $\cH^C$, we distinguish two cases. 
If the boundary $\partial_G D$ of some component $D$ of $C - B_G(B, 22K+1)$ contains three vertices that are pairwise far apart, then we obtain the desired partial decomposition with support $Y^C \subseteq G[C,1]$ by applying the following \cref{lem:CompWithThreeVertices} to $C$ and $C^* := D$. 

\begin{figure}
    \centering
    \begin{subfigure}{0.45\linewidth}
        \pdfOrNot{\def\svgwidth{\columnwidth} 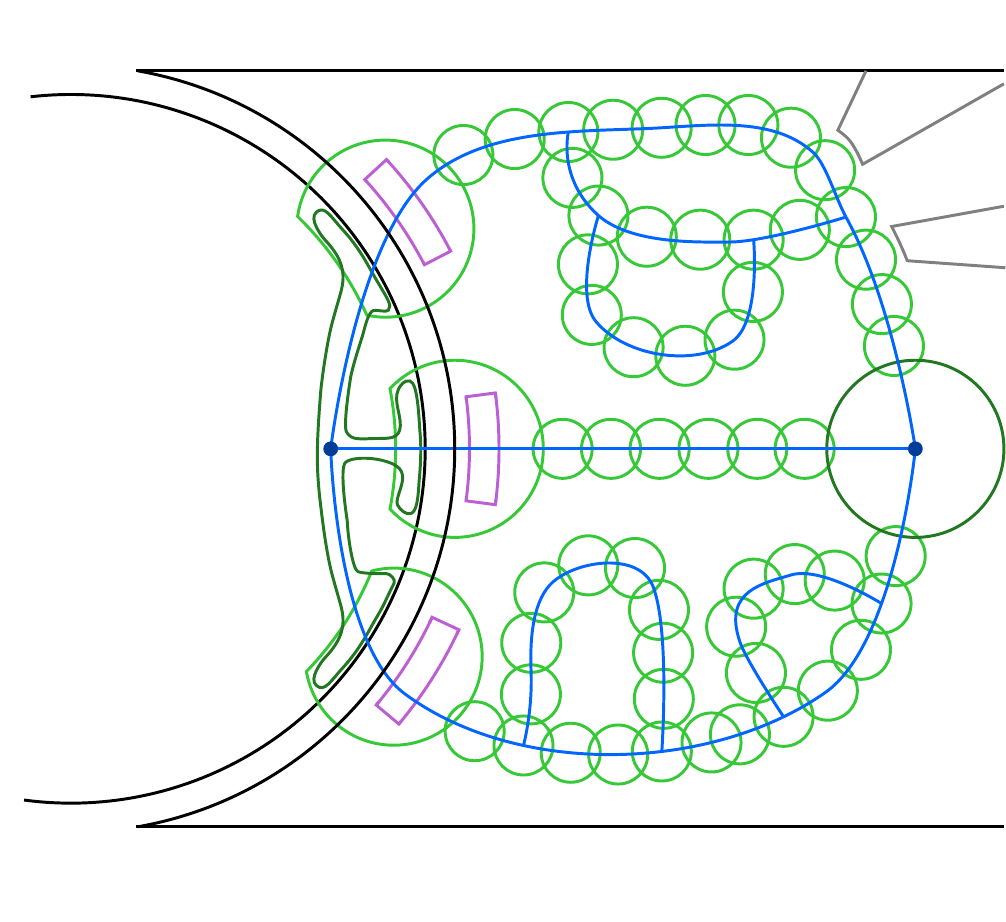}{\includesvg[width=\columnwidth]{svg/seriesparallelcomp2.svg}}
        \caption{\cref{lem:CompWithThreeVertices}}
        \label{fig:CompWidthThreeVertices}
    \end{subfigure}
    \hspace{8mm}
    \begin{subfigure}{0.45\linewidth}
        \pdfOrNot{\def\svgwidth{\columnwidth} 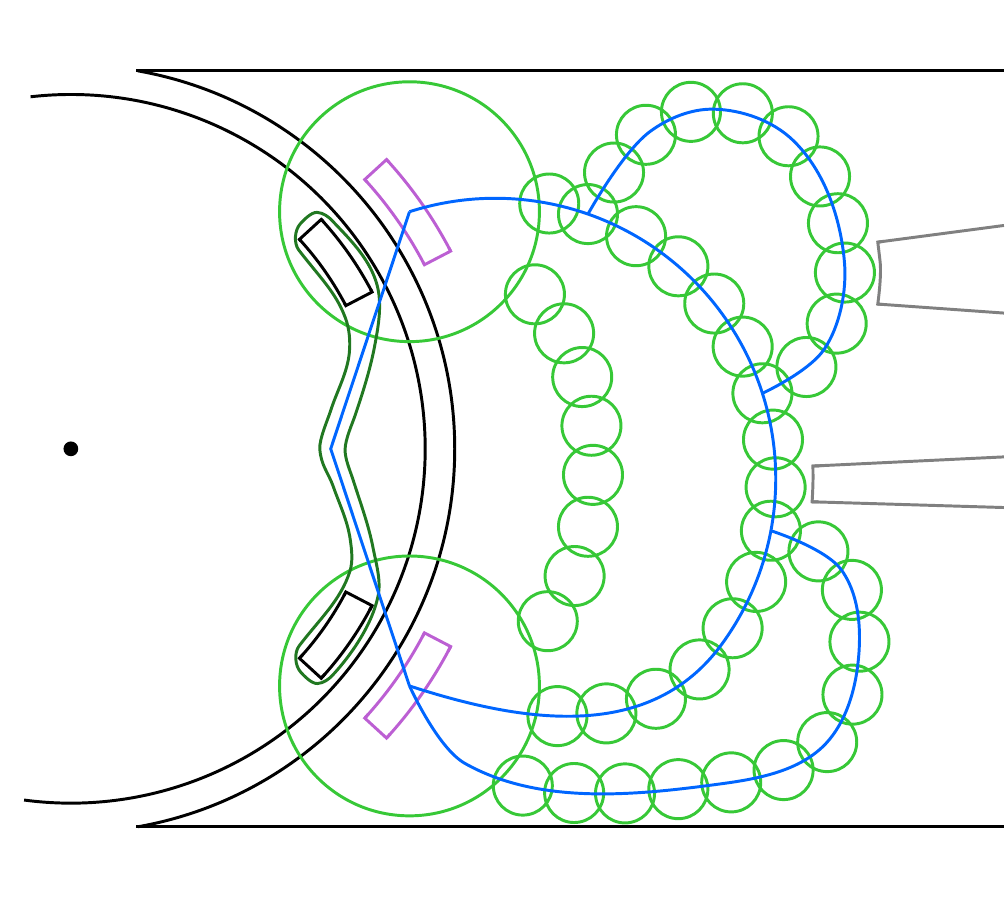}{\includesvg[width=\columnwidth]{svg/seriesparallelcomp.svg}}
        \caption{\cref{lem:CompWithBipOfNhood}}
        \label{fig:CompWidthBipartOfNhood}
    \end{subfigure}
    \caption{Depicted are the \gd s $(H, \cV)$ with support $Y$ in \cref{lem:CompWithBipOfNhood,lem:CompWithThreeVertices} if $G$ has no $K$-fat $K_4$ minor. By \cref{lem:CompWithThreeVertices}~\cref{itm:CompWithThreeVertices:CompsNhood} and \cref{lem:CompWithBipOfNhood}~\cref{itm:CompWithBipOfNhood:CompsNhood} the neighbourhood of every component of $C-Y$ is contained in some bag in $\cV$.}
    \label{fig:LemmasForK_4}
\end{figure}

\begin{lemma}\label{lem:CompWithThreeVertices}
    Let $K \in \N_{\geq 1}$, let $B$ be a ball in a graph $G$ around a vertex $w$ of radius $r \in \N_{\geq 1}$, and let $C$ be a component of $G-B$. Suppose there is a component $C^*$ of $C - B_G(B,22K+1)$ such that $\partial_G C^*$ contains three vertices that are pairwise at least $R_1(K)$ apart. 
    
    If $G$ has no $K$-fat $K_4$ minor, then there exists an induced subgraph $Y$ of $G[C,1]$ with $\partial_G C \subseteq V(Y)$ which admits an honest decomposition $(H, \cV)$ modelled on some graph $H \in \Forbminor(K_4)$ such that
    \begin{enumerate}
        \item \label{itm:CompWithThreeVertices:FirstBag} $(H, \cV)$ contains $N_G(C)$ as a bag,
        \item \label{itm:CompWithThreeVertices:Width} $\orw((H, \cV)) \leq \max\{r + 2R'_0(K)+2, R_2(K)\}$ and 
        \item \label{itm:CompWithThreeVertices:Spread} $\irs((H, \cV)) \leq 7$, and
        \item \label{itm:CompWithThreeVertices:CompsNhood} 
        $(H, \cV)$ is $R$-component-feasible for $R := \max\{r, R_2(K)\}$.
    \end{enumerate}
\end{lemma}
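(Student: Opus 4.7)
The plan is to construct an honest graph-decomposition $(H,\cV)$ of some induced subgraph $Y \subseteq G[C,1]$ whose decomposition graph $H$ is essentially a \emph{theta graph}: a root node $h_0$ carrying the bag $V_{h_0} := N_G(C)$, a central node $h^*$ placed deep inside $C^*$, and three internally disjoint $h_0$--$h^*$ paths in $H$ corresponding to three `tripod arms' inside $G$.

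First I would find a \emph{Y-tripod} inside $C^*$. Letting $Q^{12}$ be a shortest $u_1$--$u_2$ path in $C^*$ and $Q^{3s}$ be a shortest $u_3$--$V(Q^{12})$ path in $C^*$, meeting $Q^{12}$ at a vertex $s$, the three subpaths $sQ^{12}u_1$, $sQ^{12}u_2$ and $Q^{3s}$ are internally disjoint, each of length at least roughly $R_1(K)/2$, and pairwise far apart away from $s$. I would then extend each arm by a shortest $u_i$--$B$ path $P^i$ in $G$ of length exactly $22K+2$, using that $u_i \in V(C^*)$ lies just outside $B_G(B, 22K+1)$ but has a neighbour inside it; because $R_1(K)$ dominates $22K$, the three extensions are also pairwise far apart except near $B$. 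Applying \cref{lem:UnionOfBallIsDecomp} along each of the three concatenated $s$--$B$ paths with a radius parameter $r_P$ of small constant order in $K$ then yields three honest path-decompositions of tubes around the arms, each of outer-radial width and inner-radial spread at most $2r_P+1 \le 7$. Finally I would glue these at their two ends --- identifying the bags on the $s$-side into a single central bag containing $B_G(s, r_P)$, and attaching the $B$-side of each arm by a single edge of $H$ to a common root $h_0$ with $V_{h_0} := N_G(C)$ --- and then crop all bags to $V(G[C,1])$ to produce the support $Y$.

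To check $H \in \Forbminor(K_4)$, observe that the backbone of $H$ consists of three internally disjoint $h_0$--$h^*$ paths and is therefore $K_4$-minor-free; any further padding in $H$ needed to accommodate components of $G-Y$ can only attach as short tree-like appendages, and closedness of $\cH_{SP}$ under subdivision, $1$-sum and unions (\cref{obs:ConstrOfSPGraphs}) keeps us inside $\Forbminor(K_4)$. The radial width is controlled because only $V_{h_0} \subseteq B$ can be `large', with $\rad_G(V_{h_0}) \le r$, whereas every other bag lies inside a tube of radius at most $r_P$; this yields $\orw((H,\cV)) \le \max\{r + 2R'_0(K)+2, R_2(K)\}$ and $\irs((H,\cV)) \le 7$. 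The condition $\partial_G C \subseteq V(Y)$ follows because $\partial_G C \subseteq N_G(B) \cap V(C)$ is adjacent to $V_{h_0}$ and hence lies in the cropped bag neighbouring $h_0$.

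The main obstacle is \cref{itm:CompWithThreeVertices:CompsNhood}: for each component $C'$ of $G-Y$ contained in $C$, one must show that $N_G(C')$ sits inside a single bag of $(H,\cV)$. The key structural claim is that $C'$ cannot attach to two distinct tripod arms at widely separated positions --- if it did, then the three arms (as three branch sets), $C'$ (as a fourth branch set), together with the ball $B$ and the three tentacles $P^i$ (supplying `outer' branch paths) and the centre $s$ (supplying `inner' branch paths through arm subsegments), would assemble a $K$-fat $K_4$ minor of $G$, contradicting the hypothesis. The various thresholds $22K+1$, $R_0(K)$, $R_1(K)$, $R_2(K)$ and $\ell(K)$ introduced at the beginning of \cref{sec:SeriesParallel} are tuned precisely so that this $K_4$-minor argument goes through; this parameter juggling, rather than any single clever idea, is the principal technical difficulty.
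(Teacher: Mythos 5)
There is a genuine gap, and it sits exactly at the step you yourself flag as the main obstacle, \cref{itm:CompWithThreeVertices:CompsNhood}. Your fat-minor argument only excludes a component $C'$ of $G-Y$ attaching to two \emph{distinct} tripod arms at far-apart positions. It does not exclude $C'$ attaching to a \emph{single} arm (tube) at two widely separated positions, and excluding a $K$-fat $K_4$ cannot exclude this: such a configuration only produces a fat theta, i.e.\ a fat $K_4^-$, which is exactly what is forbidden in \cref{sec:Cactus} (see \cref{lem:PathDecomp}) but is perfectly possible here. Concretely, $G$ may locally look like a long fat cycle or ladder with no $K$-fat $K_4$ minor; a tube of radius $r_P$ around one geodesic side then leaves the other side as a single component of $G-Y$ whose neighbourhood meets the tube near both ends, so it fits in no bag of a path-decomposition of the tube, and it is not a ``short tree-like appendage'' either --- it needs its own decomposition of unbounded length glued in parallel. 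This is precisely why the paper does not take a tube/theta route: after the tripod step (your first paragraph is essentially \cref{lem:ThreePathsFarApart,lem:BallAndThreeComps}, which the paper uses only to locate $w'$ and three far-apart paths), it builds between $B_G(w,r-\ell(K))$ and $B_G(w',22K)$ a genuinely \emph{recursive} two-terminal decomposition (\cref{lem:ComponentAttachingToTwoBalls}), where the coarse Menger theorem for two paths (\cref{thm:DistanceMengerForTwoPaths}, via \cref{prop:BallMeetingAllPathsInComp}) supplies hub balls splitting each component, the pieces are glued by series and parallel compositions inside $\cH_{SP}$, and a limit argument handles infinite graphs; \cref{lem:CompAttToTwoBalls2} then adapts this near $B$. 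Your proposal replaces this machinery by an unsubstantiated claim, and the claim is false as stated.

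A secondary but real problem is the spread bound. \cref{lem:UnionOfBallIsDecomp} gives inner-radial spread $2r_P+1$ measured in the decomposition path, so $\irs \leq 7$ forces $r_P \leq 3$; but for bags of a tube to have any chance of swallowing neighbourhoods of adjacent components, $r_P$ must be of order $K$ (as in \cref{sec:Cactus}, where $r_P = 14K$), which makes the spread grow linearly in $K$ and violates \cref{itm:CompWithThreeVertices:Spread}. The paper avoids this tension because in \cref{lem:ComponentAttachingToTwoBalls} the bags are balls of radius $R_0(K)$ around hub vertices while every vertex of $G$ meets bags only within distance $3$ in the decomposition graph (\cref{itm:CompAttToTwoBalls:RadialSpread}), so the $G$-radius of the bags and the $H$-radius of the spread are decoupled --- something a single application of \cref{lem:UnionOfBallIsDecomp} cannot achieve.
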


Otherwise, we define $\cH^C$ by considering each component of $C- B_G(B,22K+1)$ separately. For this, we first take a $K_2$ on vertices $g, g'$ and set $V_g := N_G(C)$ and $V_{g'} := (B_G(B, 22K+1) \cap V(C)) \cup N_G(C)$. Next, to ensure that $\cH^C$ is $R_2(K)$-component-feasible, we extend this decomposition into the components $D$ of $C-B_G(B,22K+1)$ as follows.
If some $\partial_G D$ has radius $\leq R_2(K)-1$, then we may simply take a $K_2$ on vertices $g^D,h^D$, set $V_{g^D} := N_G(D) =: V_{h^D}$ and identify $g^D$ with $g'$. Otherwise, if $\partial_G D$ can be partitioned into two sets $N_0, N_1$ of radius at most $R_1(K)$ that are at least $5K+2$ far apart, then we obtain a partial decomposition $(H^C, \cV^C)$ with support $Y^C \subseteq G[C,1]$ modelled on a graph with no $K_4$ minor by the next \cref{lem:CompWithBipOfNhood} (applied to $C :=  D$), and we can then identify the node of $H^D$ given by \cref{lem:CompWithBipOfNhood}~\cref{itm:CompWithBipOfNhood:FirstBag} with $g'$. This then yields the desired partial decomposition $\cH^C$.

\begin{lemma}\label{lem:CompWithBipOfNhood}
    Let $K \in \N_{\geq 1}$, and let $B$ be a ball in a graph $G$ around a vertex $w$ of radius $r \in \N$, and let $C$ be a component of $G - B$. Suppose there are vertices $v_1, v_2 \in \partial_G C$ which are at least $2 R_1(K) + 5K + 2$ apart and $\partial_G C \subseteq B_G(v_1, R_1(K)) \cup B_G(v_2, R_1(K))$. 
    
    If $G$ has no $K$-fat~$K_4$ minor, then there exists an induced subgraph $Y$ of $G[C,1]$ with $\partial_G C \subseteq V(Y)$ which admits an honest decomposition $(H, \cV)$ modelled on some graph $H \in \Forbminor(K_4)$ such that
    \begin{enumerate}[label=\rm{(\roman*)}]
        \item \label{itm:CompWithBipOfNhood:FirstBag} $(H,\cV)$ contains $N_G(C)$ as a bag,
        \item \label{itm:CompWithBipOfNhood:Radius} $\orw((H, \cV)) \leq R := \max\{r, R_1(K) + 2R_0(K) + 5K + 2\}$,
        \item \label{itm:CompWithBipOfNhood:Spread} $\irs((H, \cV)) \leq 3$, and
        \item \label{itm:CompWithBipOfNhood:CompsNhood} $(H, \cV)$ is $R$-component-feasible; moreover, for every component of $D$ of $G-Y$ which meets, or equivalently is contained in, $C$, its neighbourhood $N_G(D)$ is contained in some bag $V_h$ of $(H, \cV)$ with $\rad(V_h) \leq R_1(K)+2R_0(K)+5K+1$.
    \end{enumerate}
\end{lemma}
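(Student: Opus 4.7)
The plan is to decompose $C$ as a parallel composition of short path-decompositions running between an anchor covering $N_1 := B_G(v_1, R_1(K)) \cap \partial_G C$ and one covering $N_2 := B_G(v_2, R_1(K)) \cap \partial_G C$. By hypothesis, $\partial_G C = N_1 \cup N_2$, and arguing as for \cref{eq:distanceN1N2} in the previous section one obtains $\dist_G(N_1, N_2) \geq 5K+2$.

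First I would set up three anchor bags: the $N_G(C)$-bag $V_{h_0} := N_G(C)$, which immediately yields \cref{itm:CompWithBipOfNhood:FirstBag} and has radius $\leq r$, together with two end-bags $V_{h_1}, V_{h_2}$ contained respectively in $G[B_G(v_i, R_1(K)+2R_0(K)+5K+1) \cap (V(C)\cup N_G(C))]$, chosen so that $V_{h_i}$ covers $N_i$ and connects honestly to $V_{h_0}$ via a shortest $v_i$--$N_G(C)$ path in $G$. Declaring $h_1, h_2$ both adjacent to $h_0$ in $H$ maintains $K_4$-minor-freeness, since this just amounts to treating $h_1 h_0 h_2$ as one more rail in the parallel structure built below.

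Second, I would iteratively extract a maximal family of pairwise $5K$-far shortest $N_1$--$N_2$ paths $P^1,\dots,P^m$ inside $C$, and then, mimicking the construction in \cref{lem:PathDecomp}, apply \cref{lem:UnionOfBallIsDecomp} to each $P^j$ with a ball radius of order~$\ell(K)$ to produce a short path-decomposition of a tubular neighbourhood of $P^j$ in $C$. These rail-decompositions attach via their first and last bags honestly to $V_{h_1}$ and $V_{h_2}$, so together with the $h_1 h_0 h_2$ rail they form a parallel composition between the terminals $h_1, h_2$; by \cref{obs:ConstrOfSPGraphs}, the resulting $H$ lies in $\cH_{SP}$ and in particular has no $K_4$ minor. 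The no-$K$-fat-$K_4$-minor hypothesis now does double duty: it bounds $m$, and it ensures that any vertex of $V(C) \setminus (V_{h_1} \cup V_{h_2})$ lies within distance of order $\ell(K)$ of some rail. Otherwise, three pairwise far rails together with one of the anchors at $v_1$ or $v_2$ would serve as the four branch sets of a $K$-fat $K_4$ model, using rail-segments as branch paths and the anchor ball to connect to all three.

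Finally, I would absorb each component of $G - Y$ that meets $C$ into the bag of whichever rail (or anchor) it is closest to, which forces \cref{itm:CompWithBipOfNhood:CompsNhood}. The tight inner-radial spread bound of $3$ should follow because, by construction, each non-anchor vertex lies in at most two consecutive bags along a single rail, and each anchor vertex lies in its anchor bag and possibly in the first bag of each incident rail, so $H_v$ always spans a subgraph of $H$ of radius at most $3$. The main obstacle is the second step: choosing the rail family carefully enough that the no-$K$-fat-$K_4$-minor hypothesis simultaneously caps the number of rails, forces them to cover the bulk of $C$, and prevents overlaps that would blow the spread past $3$.
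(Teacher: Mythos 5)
Your high-level scaffolding (set up anchors $V_{h_0}=N_G(C)$, $V_{h_1},V_{h_2}$ around $N_1:=B_G(v_1,R_1(K))\cap\partial_G C$ and $N_2$, then glue pieces of $C$ in parallel between $h_1$ and $h_2$) matches the paper's. The gap is in the second step: you replace the paper's \cref{lem:ComponentAttachingToTwoBalls} with a ``tube around each rail'' argument imported from the $K_4^-$ proof (\cref{lem:PathDecomp}), and this does not survive the passage from $K_4^-$ to $K_4$.

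Concretely, inside a single component $C'$ of $C-(B_1\cup B_2)$ attaching to both balls, \cref{lem:FindingAFatK4} (with the outside path $P$ through $B$) forces \emph{any} two $N_1$--$N_2$ paths through $C'$ to come within $5K$ of each other, so your ``maximal family of pairwise $5K$-far rails'' contains at most one rail per such $C'$. But two paths that come within $5K$ of each other somewhere can still diverge arbitrarily far elsewhere. For example, take $C'$ to consist of a path $W$ from $N_1$ to $N_2$ together with a long detour $Q$ attached at two vertices $w_i, w_j$ of $W$ with $j-i$ much larger than $\ell(K)$, where $\mathring Q$ wanders far from $W$. This is a $\theta$-shape; it has a $K_4^-$ minor but no $K_4$ minor and need not exhibit a $K$-fat $K_4$ minor in $G$, because $\mathring Q$ meets $C'$ only via $W$ and therefore cannot be joined to $B_1$, $B_2$ and $W$ simultaneously by disjoint far-apart branch paths. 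Your one rail is (essentially) $W$, and its $\ell(K)$-tube misses $\mathring Q$. The leftover component $\mathring Q\smallsetminus Y$ then attaches to bags $V_{p_i}$ and $V_{p_j}$ of the tube which are far apart along the rail, so its neighbourhood is not contained in a single bag — condition \cref{itm:CompWithBipOfNhood:CompsNhood} fails, and you cannot absorb $\mathring Q$ into a single rail bag without blowing up the radius bound \cref{itm:CompWithBipOfNhood:Radius}. Your remedy (``otherwise three pairwise far rails plus an anchor give a $K$-fat $K_4$'') does not apply here: there is only one rail, and a vertex of $\mathring Q$ far from it need not admit three disjoint far connections to rails and anchors.

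This is precisely why the paper does not use a path-decomposition inside $C'$. Instead, \cref{lem:ComponentAttachingToTwoBalls} uses the coarse Menger theorem (\cref{thm:DistanceMengerForTwoPaths}, via \cref{prop:BallMeetingAllPathsInComp}) to locate a ball $B_G(u,129\cdot 5K)$ through which \emph{every} $N_1$--$N_2$ path in $C'$ must pass, adds this ball as a new node on a \emph{series} edge between $h_1$ and $h_2$, and recurses on the two resulting pieces. The limit of this recursion is a general two-terminal graph in $\cH_{SP}$ (with both series and parallel composition), and the ``bottleneck'' balls are what guarantee that every leftover component's neighbourhood lands in a single bag. This series-parallel recursion is the genuinely new ingredient for $K_4$ compared to $K_4^-$, and your proposal omits it.
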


Let us now deduce \cref{thm:RadialSeriesParallelWidth} from \cref{lem:CompWithBipOfNhood,lem:CompWithThreeVertices}. For this, as noted earlier, it suffices to prove that $K_4$ has property \nameref{tag:star}.

\begin{lemma}\label{lem:K_4SatisfiesPartialExtension}
    $K_4$ has property \textup{\nameref{tag:star}} (with respect to $R(K) = R_2(K)$, $f'_0(K) := R_2(K)+2R'_0(K) + 2$ and $f'_1(K) = f''_1(K) := 7$).
\end{lemma}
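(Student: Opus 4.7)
The plan is to carry out the two-case strategy sketched between \cref{lem:CompWithBipOfNhood} and this lemma, verifying the clauses of property~\textup{\nameref{tag:star}} directly. Fix $K \geq 1$, a graph $G$ with no $K$-fat $K_4$ minor, a ball $B$ of radius $r \leq R_2(K)$ in $G$, and a component $C$ of $G-B$.

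If some component $C^*$ of $C - B_G(B, 22K+1)$ has three vertices in $\partial_G C^*$ pairwise at distance $\geq R_1(K)$, I would take $\cH^C$ directly from \cref{lem:CompWithThreeVertices}. Its conclusions give honesty, $N_G(C)$ as a bag, $\orw \leq \max\{r + 2R'_0(K) + 2, R_2(K)\} \leq R_2(K) + 2R'_0(K) + 2 = f'_0(K)$, $\irs \leq 7 = f'_1(K)$ (so in particular $\irs(\cH^C, v) \leq 7 = f''_1(K)$ for $v \in N_G(C)$), and $\max\{r, R_2(K)\} = R_2(K)$-component-feasibility.

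Otherwise I would build $\cH^C$ in two stages. Start with the honest two-node decomposition $(H_0, \cV_0)$ on $K_2 = gg'$ with $V_g := N_G(C)$ and $V_{g'} := (B_G(B, 22K+1) \cap V(C)) \cup N_G(C)$; this already makes $N_G(C)$ a bag and ensures $N_G(C) \cup \partial_G C \subseteq V(Y^C) \subseteq G[C,1]$. Then, for each component $D$ of $C - B_G(B, 22K+1)$, pick $v_1 \in \partial_G D$ and $v_2 \in \partial_G D$ maximising $\dist_G(v_1, v_2)$; the case hypothesis (no three pairwise $R_1(K)$-far vertices) forces $\partial_G D \subseteq B_G(v_1, R_1(K)) \cup B_G(v_2, R_1(K))$. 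If $\dist_G(v_1, v_2) < 2R_1(K) + 5K + 2$, the maximality of $v_2$ yields $\partial_G D \subseteq B_G(v_1, 2R_1(K) + 5K + 1)$, and I would attach a single leaf $h_D$ to $g'$ with bag $V_{h_D} := N_G(D) \cup \partial_G D$ (of radius $\leq R_2(K) - 1$; honestly glued via $N_G(D) \subseteq V_{g'} \cap V_{h_D}$). Otherwise $\dist_G(v_1, v_2) \geq 2R_1(K) + 5K + 2$, and \cref{lem:CompWithBipOfNhood} applies to the ball $B_G(B, 22K+1)$ and the component $D$ (which is indeed a component of $G - B_G(B, 22K+1)$, since any path from $D$ to outside $V(C)$ inside $G - B_G(B, 22K+1)$ would have to cross $V(B) \subseteq B_G(B, 22K+1)$); I would then attach the resulting $(H^D, \cV^D)$ to the current decomposition by identifying its $N_G(D)$-bag node $h^D$ with $g'$, which is legitimate since $N_G(D) \subseteq V_{g'}$.

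Because $K_4$ is $2$-connected, the successive $1$-sums at $g'$ of $K_4$-minor-free graphs yield $H^C \in \Forbminor(K_4)$. The outer-radial width stays $\leq f'_0(K)$ after comparing constants, and $R_2(K)$-component-feasibility follows by case analysis on a component $D'$ of $G - Y^C$ (outside $V(C)$, with $N_G(D') \subseteq V_g$; inside a Sub-case~2a component $D$, with $N_G(D') \subseteq \partial_G D \subseteq V_{h_D}$; or inside a Sub-case~2b component $D$, handled by the `moreover' clause of \cref{lem:CompWithBipOfNhood}\,(iv) which bounds the feasibility bag radius by $R_1(K) + 2R_0(K) + 5K + 1 \leq R_2(K)$). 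The main piece of bookkeeping will be the inner-radial spread: a vertex $v$ may simultaneously lie in $V_g$, in $V_{g'}$, in several Sub-case~2a leaves $h_D$ (each adjacent to $g'$), and in subgraphs $H^D_v \subseteq H^D$ from Sub-case~2b components $D$. Since each such $H^D_v$ has $H^D$-radius $\leq 3$ and contains $h^D = g'$, its $H^D$-diameter from $g'$ is at most $6$; choosing $g'$ as centre of $H_v$ in $H^C$ then yields $\rad_{H^C}(H_v) \leq 6 \leq 7 = f'_1(K) = f''_1(K)$, completing the verification.
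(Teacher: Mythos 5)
Your proposal is correct and follows essentially the same route as the paper: case~1 invokes \cref{lem:CompWithThreeVertices} exactly as the paper does, and case~2 builds the same $gg'$-spine with the decompositions from \cref{lem:CompWithBipOfNhood} glued at $g'$, differing only cosmetically in that you cover a small-boundary component $D$ by a leaf with bag $N_G(D)\cup\partial_G D$ where the paper uses a $K_2$ with both bags equal to $N_G(D)$. One tiny wrinkle: in an infinite graph a pair $v_1,v_2\in\partial_G D$ maximising $\dist_G(v_1,v_2)$ need not exist, so one should instead branch on whether some pair of boundary vertices is at distance at least $2R_1(K)+5K+2$ (as the paper does) — with that trivial adjustment your argument goes through unchanged.
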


\begin{proof}
    Let $K \in \N_{\geq 1}$, and let $G$ be a graph with no $K$-fat $K_4$ minor.
    Let $B$ be a ball in $G$ of radius $r \leq R_2(K)$ around a vertex $v \in V(G)$, and let $C$ be a component of $G-B$. 
    Further, let $\cD$ be the set of components of $C-B_G(B,22K+1)$, and first assume that there is some $D \in \cD$ such that $\partial_G D$ contains three vertices that are pairwise at least $R_1(K)$ apart. Then we obtain an induced subgraph $Y \subseteq G[C,1]$ with $\partial_G C \subseteq V(Y)$ and the desired honest $R_2(K)$-component-feasible $(f'_0(K), 7)$-partial decomposition $(H, \cV)$ of $Y$ with $N_G(C) = V_g$ for some $g \in V(H)$ by applying \cref{lem:CompWithThreeVertices} to the ball $B$ of radius $r \leq R_2(K)$ and the component $C$ of $G-B$, which completes the proof in this case.
    
    Otherwise, let $\cD' \subseteq \cD$ be the set of components $D$ of $C-B_G(B,22K+1)$ such that there are two vertices in $\partial_G D$ which are at least $2 R_1(K) + 5K + 2$ apart. Then by assumption, every $D \in \cD'$ satisfies the premise of \cref{lem:CompWithBipOfNhood}; let $(H^D, \cV^D)$ be the partial decomposition with support $Y^D$ obtained from applying \cref{lem:CompWithBipOfNhood} to the ball $B_G(B, 22K+1)$, which is a ball of radius $r+22K+1$ around $v$, and the component $D$. Let $g^D$ be a node of $H^D$ with $V^D_{g^D} = N_G(D)$, which exists by \cref{itm:CompWithBipOfNhood:FirstBag}.
    Further, for every component $D \in \cD \setminus \cD'$, let $H^D$ be a $K_2$ on vertices $g^D, h^D$ and set $V^D_{g^D} :=  V^D_{h^D} := N_G(D)$. 
    
    Now let $H$ be the graph obtained from the disjoint union of the $H^D$ for $D \in \cD'$ by first identifying all $g^D$ to a single node, which we call $g'$, and adding a node $g$ and the edge $gg'$. Note that $H \in \Forbminor(K_4)$ since it is the $1$-sum of the $K_4$-minor-free graphs $H^D$ and $K_2$. Set $V_h := V^D_h$ for all $h \in V(H)\setminus \{g,g'\}$ where $D$ is the unique component in $\cD'$ such that $h \in V(H^D)$ and $V_{g'} := (B_G(B,22K+1) \cap V(C)) \cup N_G(C)$ and $V_g := N_G(C)$. Then, by construction, $(H, \cV)$ is a partial decomposition with support $Y := G[\bigcup V(Y^D) \cup V_{g'}]$ since $V^D_{g^D} \subseteq V_{g'}$ for all $D \in \cD'$.
    
    We claim that $(H, \cV)$ is as desired. Indeed, the outer-radial width of $(H,\cV)$ is $R_2(K) + 22K + 1 \leq f_0'(K)$, since $\rad(V_{h^D}),\rad_G(V_g) \leq \rad_G(V_{g'}) \leq \rad_G(B) + 22K+1 \leq R_2(K) + 22K + 1$ for every $D \in \cD \setminus \cD'$ and $\rad(V_h) \leq R_2(K) + 22K+1$ for all other $h \in V(H)$ by \cref{itm:CompWithBipOfNhood:Radius} of \cref{lem:CompWithBipOfNhood}.
    Note that the radial spread of $(H, \cV)$ is $\leq 2 \cdot \max_{D \in \cD} \irs((H^D, \cV^D)) \leq 6 \leq f_1'(K)$, since $V_{g} \subseteq V_{g'}$, or it is $1$ if $\cD = \emptyset$.
    
    By construction, it follows immediately from \cref{lem:CompWithBipOfNhood}~\cref{itm:CompWithBipOfNhood:CompsNhood} that for every component of $G-Y$ which meets $C$ there exists a bag $V_h$ containing its neighbourhood. Moreover, clearly, every component of $G-Y$ which does not meet $C$ has only neighbours in $N_G(C) = V_g \subseteq B$. All in all, $(H,\cV)$ is  $R(K)$-component-feasible.
\end{proof}

\begin{proof}[Proof of \cref{thm:RadialSeriesParallelWidth}]
    By \cref{thm:CompExtensionLem}, \cref{lem:K_4SatisfiesPartialExtension} immediately yields \cref{thm:RadialSeriesParallelWidth}.
\end{proof}

\begin{proof}[Proof of \cref{main:FatK4}]
    Let $G$ be a graph with no $K$-fat $K_4$ minor.
    By \cref{lem:GraphDecToQuasiIso}, \cref{thm:RadialSeriesParallelWidth} yields that there exists a graph $H$ with no $K_4$ minor which is $(50470K + 142, 50470K + 142)$-quasi-isometric to $G$.
    Hence, it follows from \cref{lem:inversequasiisom} that $G$ is $(50470K + 142, 3 \cdot (50470K + 142)^2)$-quasi-isometric to $H$.
    Thus, \cref{main:Cactus} holds with $f(K) := (50470K + 142, 3 \cdot(50470K + 142)^2)$.
\end{proof}

The remainder of this section is devoted to the proofs of \cref{lem:CompWithBipOfNhood,lem:CompWithThreeVertices}. For this, we first show in \cref{subsec:FindingAKFatK4Minot} an auxiliary lemma that we will use to find $K_4$ as a $K$-fat minor in a graph $G$ whenever we cannot find our desired decomposition of $G$ modelled on a graph with no $K_4$ minor of small outer-radial width. Then, in \cref{subsec:Lemma1,subsec:Lemma2}, we prove \cref{lem:CompWithBipOfNhood,lem:CompWithThreeVertices}, respectively.

\subsection{Finding a \texorpdfstring{$K$}{K}-fat \texorpdfstring{$K_4$}{K4} minor} \label{subsec:FindingAKFatK4Minot}

\begin{lemma}\label{lem:FindingAFatK4}
    Let $G$ be a graph, $K, r_1, r_2 \in \N_{\geq 1}$, and let $v_1, v_2$ be two vertices of $G$ that are at least $r_1 + r_2 + 5K$ apart. Set $B_i := B_G(v_i, r_i)$ for $i \in [2]$, and suppose there are three $B_1$--$B_2$ paths $P_1$, $P_2$ and $P_3$ which are pairwise at least $5K$ apart.
    
    If $G$ has no $K$-fat $K_4$ minor, then the $\mathring{P_i}$ lie in distinct components of $G - (B_1 \cup B_2)$.
\end{lemma}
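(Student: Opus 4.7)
The plan is to argue by contraposition: assuming two of the $\mathring{P_i}$ lie in a common component of $G - (B_1 \cup B_2)$, we construct a $K$-fat $K_4$-model in $G$. Without loss of generality, say $\mathring{P_1}$ and $\mathring{P_2}$ lie in the same component $C^*$. Pick a $\mathring{P_1}$--$\mathring{P_2}$ path $Q$ in $C^*$, trimmed to be internally disjoint from $P_1 \cup P_2$, with endpoints $q_1 \in V(\mathring{P_1})$ and $q_2 \in V(\mathring{P_2})$. Since $\mathring{P_1}$ (resp.~$\mathring{P_2}$) lies entirely in $C^*$, we are free to choose $q_1$ (resp.~$q_2$) at $G$-distance at least $K$ from $B_1 \cup B_2$: otherwise $V(\mathring{P_1})$ would be contained in $B_G(v_1, r_1 + K - 1) \cup B_G(v_2, r_2 + K - 1)$, but these two balls are disjoint and joined by no edge because $\dist_G(v_1, v_2) \geq r_1 + r_2 + 5K$, contradicting connectedness of $P_1$.

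We then build the $K_4$-model in $G$ as follows. The branch sets are $V_1 := B_1$, $V_2 := B_2$, and $V_3, V_4$ are subpaths of $P_1, P_2$ surrounding $q_1, q_2$ respectively (each chosen thick enough in $G$-diameter, see below). The branch paths are $E_{12} := P_3$, $E_{13}$ and $E_{23}$ the two subpaths of $P_1$ flanking $V_3$, $E_{14}$ and $E_{24}$ the two subpaths of $P_2$ flanking $V_4$, and $E_{34} := Q$. The branch sets are pairwise disjoint since each $\mathring{P_i}$ avoids $B_1 \cup B_2$ and $P_1, P_2$ are disjoint; the branch paths are internally disjoint by the pairwise internal disjointness of $P_1, P_2, P_3$ together with the choice of $Q$. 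Most of the ten $K$-fat distance bounds then follow immediately from the hypotheses: $\dist_G(V_1, V_2) \geq 5K$ by the hypothesis on $\dist_G(v_1, v_2)$; any pair of branch sets or branch paths lying on distinct $P_i$'s are $\geq 5K$ apart; $V_3, V_4$ are $\geq K$ from $B_1 \cup B_2$ by our choice of $q_1, q_2$; and $E_{34} = Q$ avoids $B_1 \cup B_2$ with endpoints at distance $\geq K$ from them, giving $\dist_G(E_{34}, V_i) \geq K$ for $i = 1, 2$.

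The principal obstacle is to show $\dist_G(E_{13}, E_{23}) \geq K$ (and analogously $\dist_G(E_{14}, E_{24}) \geq K$), since both $E_{13}$ and $E_{23}$ lie on the same path $P_1$ and could a priori be brought close through a $G$-shortcut. We address this by choosing $V_3$ as a long enough middle subpath of $P_1$ whose two $P_1$-endpoints are at $G$-distance at least $K$, so that $V_3$ acts as a thick separator in $G$ forcing every $E_{13}$-$E_{23}$ path to detour by length $\geq K$. Such a subpath exists because $a_1, b_1 \in V(P_1)$ are at $G$-distance $\geq 5K$; if however $P_1$ itself admits a $G$-shortcut chord too short to permit the desired separation, we replace $P_1$ by the chorded subpath (which remains $\geq 5K$ apart from $P_2, P_3$) and repeat, thereby reducing to the case where the separation does exist and concluding the construction of the $K$-fat $K_4$-model.
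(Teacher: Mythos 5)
Your high-level plan (contraposition; two balls and the two paths $P_1,P_2$ as branch sets, with $P_3$ and a connecting path $Q$ inside the common component as two of the branch paths) is the same skeleton as the paper's, but several of the ten required fatness bounds are either argued incorrectly or not argued at all, and these are exactly the points the paper's extra machinery exists for. First, your claim that $\dist_G(E_{34},V_i)=\dist_G(Q,B_i)\geq K$ ``because $Q$ avoids $B_1\cup B_2$ and its endpoints are far from the balls'' is a non sequitur: avoiding a set does not keep a path far from it. The component of $G-(B_1\cup B_2)$ containing $\mathring{P_1}$ contains vertices adjacent to $B_1$ (already the second vertex of $P_1$ is one), so the interior of $Q$ may come within distance $1$ of $V_1=B_1$. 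This is precisely why the paper does not use the full balls as branch sets but the shrunken balls $B_G(v_i,r_i-2K)$: the component is then automatically at distance $>2K$ from them, while the $K$-long end segments of the shortest paths $Q^i$ (taken through $P_i$ between the shrunken balls) stay inside the original balls. Second, your treatment of the pair you yourself single out, $\dist_G(E_{13},E_{23})\geq K$, is invalid: a subpath $V_3$ of $P_1$ whose two endpoints are at $G$-distance $\geq K$ is not a separator of $G$, so nothing forces a short $E_{13}$--$E_{23}$ connection to pass anywhere near $V_3$; and the fallback of iteratively splicing in ``chords'' neither covers shortcuts running through vertices off $P_1$ (splicing those in can ruin $\dist_G(P_1,P_2),\dist_G(P_1,P_3)\geq 5K$) nor comes with a termination argument, which matters since $G$ may be infinite. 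The paper avoids this problem structurally: in its assignment the two branch paths flanking each path-branch-set are the two end segments of $Q^i$, which lie in $B_1$ and in $B_2$ respectively and are therefore $\geq 5K$ apart, so no such separation along a single path is ever needed.

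Third, some required pairs are simply never addressed. $E_{34}=Q$ and $E_{12}=P_3$ are distinct branch paths, so you need $\dist_G(Q,P_3)\geq K$, yet $Q$ is chosen with no constraint keeping it away from $P_3$ (whose interior may well lie in the same component and be hugged by $Q$); likewise $\dist_G(Q,E_{13}),\dist_G(Q,E_{23}),\dist_G(Q,E_{14}),\dist_G(Q,E_{24})\geq K$ are required, and $Q$ may run at distance $1$ alongside $P_1$ before veering off towards $P_2$. In the paper these are handled by letting the connecting path $W$ stop as soon as it enters $B_G(Q^2\cup Q^3,K)$, passing to a shortest $P_1$--$Q^2$ path $W'$ in $G[Q^2,K]\cup W$, absorbing the first and last $K$ vertices of $W'$ into the adjacent branch sets, and using only the middle $w_KW'w_{m-K}$ as the branch path. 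A smaller but real additional issue: you cannot in general prescribe both endpoints $q_1,q_2$ of $Q$ to be far from the balls and simultaneously trim $Q$ to be internally disjoint from $P_1\cup P_2$, since trimming changes the endpoints. Without repairs of this kind, the proposed model need not be $K$-fat, so the proof as written does not go through.
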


\begin{proof}
    Let us first note that since the paths $P_1,P_2,P_3$ are pairwise at least $5K$ apart, also $r_i \geq \lceil5K/2\rceil > 2K$ for $i \in [2]$. 
    For $i \in [3]$, let $Q^i := q^i_0 \dots q^i_{n_i}$ be a shortest $B_G(v_1, r_1-2K)$--$B_G(v_2, r_2-2K)$ path in $G[B_1] \cup P_i \cup G[B_2]$; in particular, $n_i := ||P_i|| + 4K$.

    Suppose that at least two $\mathring{P_i}$ lie in the same component $C$ of $G - (B_1 \cup B_2)$. We show that $G$ has a $K$-fat~$K_4$ minor. For this, assume without loss of generality that $P_1 \subseteq G[C,1]$. Let~$W$ be a $P_1$--$B_G(Q^2 \cup Q^3, K)$ path in $C$; by symmetry, we may assume that $W$ ends in $B_G(Q^2, K)$.
    Further, let $W' = w_0 \dots w_m$ be a shortest $P_1$--$Q^2$ path in $G[Q^2, K] \cup W$. Note that $W'$ ends in a vertex in $q^2_KQ^2q^2_{n_2-K} = Q^2 \cap G[P_2, K]$, as $W \subseteq C \subseteq G - (B_1 \cup B_2)$ . In particular, since $Q^2$ is a shortest $B_G(v_1, r_1-2K)$--$B_G(v_2, r_2-2K)$ path in $G[B_1 \cup V(P_2) \cup B_2]$ it follows that $w_{m-K}W'w_m \subseteq G[P_2,2K]$.

    \begin{figure}[ht]
        \centering
        \pdfOrNot{\def\svgwidth{0.6\columnwidth} 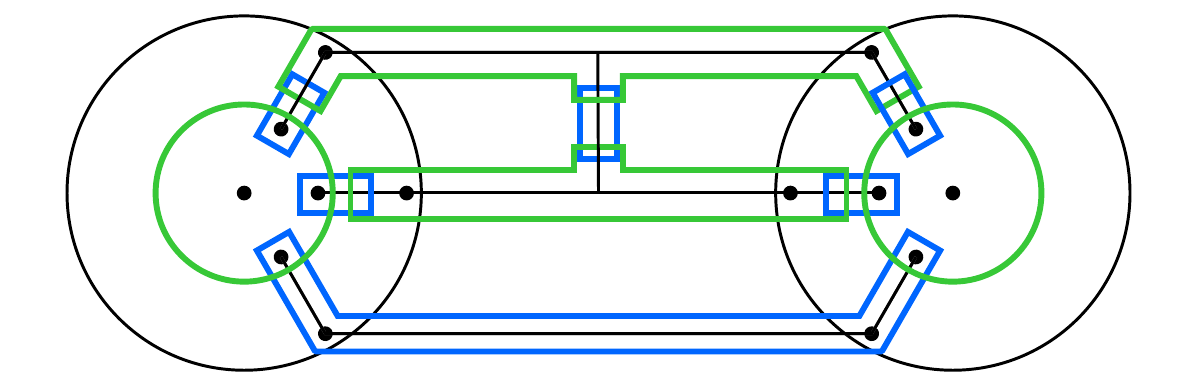}{\includesvg[width=0.6\columnwidth]{svg/k4.svg}}
        \caption{The $K$-fat $K_4$ minor in the proof of \cref{lem:FindingAFatK4}.}
        \label{fig:FindingAFatK4}
    \end{figure}
    
    By defining the branch sets as
    \begin{align*}
    V_1 &:= V(q^1_{K}Q^1q^1_{n_1 - K}) \cup V(w_0W'w_K),\; V_2 := V(q^2_{K}Q^2q^2_{n_2 - K} w_{m-K}W'w_m),\\
    V_3 &:= B_G(v_1, r_1-2K) \text{ and } V_4 := B_G(v_2, r_2-2K).
    \end{align*}
    and the branch paths as
    \[
    E_{12} := w_KW'w_{m-K},\; E_{i3} := q^i_{0}Q^iq_{K}^i,\; E_{i4} := q^i_{n_i - K}Q^iq^i_{n_i} \text{ and } E_{34} := Q^3
    \]
    for $i \in [2]$ we obtain a $K$-fat $K_4$-model in $G$ (see \cref{fig:FindingAFatK4}).
    Indeed, the sets $V_i$ and the paths $E_{ij}$ obviously form a model of $K_4$. It remains to show that this $K_4$-model is $K$-fat.
    
    By construction, we have $V_3, V(E_{i3}) \subseteq B_1$ and $V_4, V(E_{i4}) \subseteq B_2$ for $i \in [2]$, and thus by assumption 
    \[
    \dist_G(V_3, V_4), \dist_G(E_{i3}, E_{j4}), \dist_G(V_3, E_{i4}), \dist_G(V_4, E_{i3}) \geq \dist_G(B_1, B_2) \geq 5K \geq K
    \]
    for $i, j \in [2]$. 
    Moreover, $\dist_G(V(Q^1) \cup V_1, V(Q^2) \cup V_2) \geq \dist_G(P_1, P_2) - 2\cdot 2K \geq 5K - 2\cdot 2K = K$ since $V(Q^i) \cup V_i \subseteq B_G(P_i, 2K)$ for $i \in [2]$.
    This yields that 
    \[
    \dist_G(V_1, V_2), \dist_G(V_1, E_{2i}), \dist_G(V_2, E_{1i}), \dist_G(E_{1i}, E_{2i}) \geq K
    \]
    for $i \in \{3,4\}$.
    As above, we have $d_G(Q^3, V(Q^i) \cup V_i) \geq K$ since also $Q^3 \subseteq G[P_3,2K]$ and $\dist_G(P_3, P_i) \geq 5K$ for $i \in [2]$. 
    Further, $\dist_G(Q^3, W) \geq K$ by the choice of $W$.
    Thus,
    \[
    \dist_G(E_{34}, V_i), \dist_G(E_{34}, E_{ij}), \dist_G(E_{34}, E_{12}) \geq K
    \]
    for $i \in [2]$ and $j \in \{3,4\}$. 
    As $C$ is a component of $G-(B_1 \cup B_2)$, we have $\dist_G(C, B_G(v_i, r_i-K)) = K + 1$ for $i \in [2]$. Since $E_{12} \subseteq W \subseteq C$ this implies that
    \[
    \dist_G(E_{12}, E_{ij}), \dist_G(E_{12}, V_j) \geq \dist_G(C, B_G(v_i, r_i-K)) = K + 1 \geq K
    \]
    for $i \in [2]$ and $j \in \{3, 4\}$.
    Finally, we obtain
    \[
    \dist_G(V_i, V_j) \geq \dist_G(B_G(C,K), B_G(v_{j-2}, r_{j-2} - 2K))  = 2K - K = K
    \]
    for $i \in [2]$ and $j \in \{3, 4\}$.
    This completes the proof.
\end{proof}

\subsection{Proof of \texorpdfstring{\cref{lem:CompWithBipOfNhood}}{Lemma 5.2}} \label{subsec:Lemma1}

Let us briefly sketch the proof of \cref{lem:CompWithBipOfNhood}. For this, let us first recall its premises: Let $G$ be a graph with no $K$-fat $K_4$ minor for $K \in \N_{\geq 1}$, let $B$ be a ball in $G$, and let $C$ be a component of $G-B$. Suppose that there are vertices $v_1, v_2 \in \partial_G C$ such that the sets $B_i := (v_i, R_1(K))$ for $i \in [2]$ induce a partition of $\partial_G C$ and such that $\dist_G(B_1, B_2) \geq 5K+2$. We then construct for every component $D$ of $C - (B_1 \cup B_2)$ a partial decomposition $(H^D, \cV^D)$ of $G$ with support $Y \subseteq G[D,1]$ modelled on a graph $H^D \in \cH_{SP}$ with terminals $h^D_1, h^D_2$ such that $V^D_{h^D_i} = N_G(D) \cap B_i$ (\cref{lem:ComponentAttachingToTwoBalls}).
To obtain the desired partial \gd\ $(H, \cV)$ to satisfy the conclusion of \cref{lem:CompWithBipOfNhood}, we then glue all the decompositions $(H^D, \cV^D)$ together. More precisely, we obtain $H$ from the disjoint union of the $H^D$ by first identifying all the $h_1^D$ to a node~$h_1$ as well as all the $h_2^D$ to a node~$h_2$, and then adding a vertex $g$ and the edges $gh_1, gh_2$. As $H$ hence arises as the parallel composition of graphs in $\cH_{SP}$, it is again in $\cH_{SP}$ by \cref{obs:ConstrOfSPGraphs} and hence has no $K_4$ minor. The desired partial \gd\ is then given by $(H, \cV)$ where $V_{h_i} := (B_i \cap V(C)) \cup (N_G(C) \cap B_G(B_i, 1))$, $V_g := N_G(C)$ and $V_h := V^D_h$ for all other $h \in V(H)$ where $H^D$ is the unique graph containing $h$.

In the proof of \cref{lem:ComponentAttachingToTwoBalls}, we construct the partial \gd s $(H^D, \cV^D)$ recursively via \cref{prop:BallMeetingAllPathsInComp} below.
For this, we will need the following coarse version of Menger's theorem for two paths.

\begin{theorem}{\cite{distancemengerfortwo}*{Theorem 2}} \label{thm:DistanceMengerForTwoPaths}
    Let $G$ be a graph, $X, Y \subseteq V(G)$, and let $Q$ be a shortest $X$--$Y$ path in~$G$. For all $d \in \N_{\geq 1}$, either there exist two disjoint $X-Y$ paths $P_1$, $P_2$ that are at least $d$ apart or there exists $z \in V(Q)$ such that $B_G(z, 129 d)$ intersects every $X - Y$ path.\footnote{That the vertex $z$ may be chosen on some shortest $X$--$Y$ path is not stated in \cite{distancemengerfortwo}*{Theorem 1}, but it follows easily from its proof. For convenience, we also remark that if $z$ is any vertex such that $B_G(z, 129d)$ intersects every $X$--$Y$ path, then $B_G(z, 129d)$ in particular intersects $Q$ in some vertex $z'$. Then also $B_G(z', 2\cdot129d)$ intersects every $X$--$Y$ path, so the assertion follows directly from \cite{distancemengerfortwo}*{Theorem 1} if we increase the radius of the ball by a factor of $2$.}
\end{theorem}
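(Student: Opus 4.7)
The plan is a contrapositive argument: assume that for every $z \in V(Q)$ there exists an $X$--$Y$ path $P_z$ in $G$ with $P_z \cap B_G(z, 129d) = \emptyset$, and construct two internally disjoint $X$--$Y$ paths $P_1, P_2$ with $\dist_G(P_1, P_2) \ge d$. The single structural input from the hypothesis on $Q = q_0 \ldots q_n$ is that every subpath $q_i Q q_j$ is a geodesic, i.e., $\dist_G(q_i, q_j) = |i - j|$; nothing else about $Q$ is used.

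As a warm-up, vertex Menger says that unless $X$ and $Y$ can be separated by a single vertex, two internally disjoint $X$--$Y$ paths exist. The contrapositive hypothesis, applied to $B_G(z, 1) \subseteq B_G(z, 129d)$, rules out single-vertex separators on $Q$, so such disjoint paths exist; the task is to upgrade disjointness to metric separation. For this I would partition $V(Q)$ into consecutive blocks of length $\sim 2 \cdot 129 d$, pick one witness $P_z$ per block centre, and use a pigeonhole/ratio argument to locate two witnesses $P_z, P_{z'}$ whose indices on $Q$ differ by $\gg 129 d$. By geodesicity, $B_G(z, 129d)$ and $B_G(z', 129d)$ are then disjoint and non-adjacent in $G$.

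From $P_z$ and $P_{z'}$ I would splice two $X$--$Y$ paths $P_1, P_2$: $P_1$ joins $P_z$ to the $q_0$-side of $Q$, and $P_2$ joins $P_{z'}$ to the $q_n$-side; any leftover crossings between the two candidates along $Q$ are removed by a standard greedy swap using the two far-apart ball-free corridors. Then any path $R$ witnessing $\dist_G(P_1, P_2) < d$ would either cross the $\gg 129d$ untouched stretch of $Q$ between the two corridors, which is ruled out by geodesicity, or pierce one of the balls $B_G(z, 129d), B_G(z', 129d)$, which is ruled out by the construction of $P_z, P_{z'}$.

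The main obstacle is that the disjointification step and the distance lower bound pull against each other: the same stretches of $Q$ that are handy for rerouting are also the natural avenue for a short $P_1$--$P_2$ connection. The constant $129$ serves as the slack that keeps both simultaneously achievable, apparently absorbing a shadow-radius $\sim 64d$, its duplication $\sim 128d$ when combining two witnesses, and the $+d$ needed in the final distance bound. The precise calibration is carried out in the cited \cite{distancemengerfortwo}*{Theorem~1}, which the paper invokes directly without reproving.
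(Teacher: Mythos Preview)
The paper does not prove this theorem; it is quoted directly from \cite{distancemengerfortwo}, with only a footnote explaining why the centre $z$ may additionally be taken on a fixed shortest $X$--$Y$ path (either by inspecting the original proof, or by doubling the radius as the footnote spells out). You correctly recognise this in your final sentence, so at the level of what the paper actually does, your proposal matches: both defer to the cited reference.

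That said, the proof sketch you offer beforehand does not work as stated. The contrapositive gives you, for each $z\in V(Q)$, a witness path $P_z$ avoiding $B_G(z,129d)$, but this places no constraint whatsoever on the mutual distance between two such witnesses: $P_z$ and $P_{z'}$ could coincide, or run within distance $1$ of one another throughout, since the only requirement on $P_z$ is that it miss one specific ball. Your pigeonhole step finds $z,z'$ whose \emph{balls} are far apart, not whose \emph{witness paths} are. The subsequent splicing compounds the problem: you propose to route both of the paths $P_1,P_2$ through segments of the same geodesic $Q$, which is exactly what you must avoid if they are to be $\ge d$ apart. The argument in \cite{distancemengerfortwo} is not a two-witness combination of this shape; the constant $129$ comes out of an iterative construction rather than from adding two avoided-ball radii and a slack $d$. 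Since you ultimately cite the reference, none of this affects the correctness of your write-up, but the heuristic sketch would not survive being fleshed out.
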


\begin{lemma}\label{prop:BallMeetingAllPathsInComp}
    Let $G$ be a graph, $K, r_1, r_2 \in \N_{\geq 1}$, and let $v_1, v_2$ be two vertices of $G$ that are at least $r_1 + r_2 + 5K+2$ apart. Set $B_i := B_G(v_i, r_i)$ for $i \in [2]$. Let~$C$ be a component of $G - (B_1 \cup B_2)$ which attaches to $B_1$ and to $B_2$, and suppose there exists a $B_1$--$B_2$ path $P$ in $G$ such that $\dist_G(P, G[C,1]) \geq 5K$.
    Set $N_i := N_G(C) \cap B_i$ for $i \in [2]$.
    If $G$ has no $K$-fat $K_4$ minor, then there exists a vertex $u \in B_G(C,1)$ such that every $N_1$--$N_2$ path through $C$ intersects $B_G(u, 129\cdot5K)$. 
    
    Moreover, for $B' := B_G(u, 130\cdot 5K)$, for $i \in [2]$ and for every component $C'$ of $G - (B_i \cup B')$ that is contained in $C$ and attaches to $B_i$ and $B'$, there exists a $B_i$--$B'$ path $P'$ such that $d_G(P, G[C',1]) \geq 5K$.
\end{lemma}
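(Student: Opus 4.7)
The plan is to apply \cref{thm:DistanceMengerForTwoPaths} to the induced subgraph $G_C := G[V(C) \cup N_G(C)]$ of $G$. Since $C$ is a component of $G - (B_1 \cup B_2)$, its neighbours all lie in $B_1 \cup B_2$, so $N_G(C) = N_1 \cup N_2$; in particular $V(G_C) \subseteq B_G(C,1)$, and the $N_1$--$N_2$ paths in $G_C$ are precisely the $N_1$--$N_2$ paths through $C$ in $G$.

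I would apply \cref{thm:DistanceMengerForTwoPaths} in $G_C$ with $X := N_1$, $Y := N_2$ and $d := 5K$. In the \emph{separator} case, the theorem produces a vertex $z$ on a shortest $N_1$--$N_2$ path in $G_C$ such that $B_{G_C}(z, 129 \cdot 5K)$ meets every $N_1$--$N_2$ path in $G_C$; setting $u := z$ gives the first conclusion, since $u \in V(G_C) \subseteq B_G(C,1)$ and $B_{G_C}(u, 129 \cdot 5K) \subseteq B_G(u, 129 \cdot 5K)$.

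Otherwise the theorem yields disjoint $N_1$--$N_2$ paths $P_1, P_2$ in $G_C$ at pairwise $G_C$-distance at least $5K$, and the goal is to derive a contradiction by applying \cref{lem:FindingAFatK4} to $P, P_1, P_2$ as three $B_1$--$B_2$ paths. Since $P_1, P_2 \subseteq G[C,1]$, the premise $\dist_G(P, G[C,1]) \geq 5K$ already gives $\dist_G(P, P_i) \geq 5K$; the remaining estimate $\dist_G(P_1, P_2) \geq 5K$ will require a structural analysis, since any path in $G$ between $P_1$ and $P_2$ shorter than $5K$ would have to exit $V(G_C)$ through $N_G(C) = N_1 \cup N_2$, and the hypothesis $\dist_G(B_1, B_2) \geq 5K + 2$ would force both such exit points into the same $N_i$. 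Once this is done, \cref{lem:FindingAFatK4} places the interiors of $P, P_1, P_2$ in pairwise distinct components of $G - (B_1 \cup B_2)$, contradicting $\mathring{P_1}, \mathring{P_2} \subseteq V(C)$.

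For the moreover part, fix $i \in [2]$ and a component $C'$ of $G - (B_i \cup B')$ contained in $C$ and attaching to both $B_i$ and $B'$. Since $V(C') \subseteq V(C)$, we have $G[C',1] \subseteq G[C,1]$, and so $\dist_G(P, G[C',1]) \geq 5K$. If $P$ meets $B'$, I would take $P'$ to be the initial segment of $P$ from its $B_i$-endpoint to its first vertex in $B'$, inheriting the distance bound from $P$; if $P$ avoids $B'$ entirely, a short additional argument, exploiting that $u$ is far from $P$, would produce $P'$ using the local structure around $u$. The hard part throughout will be the distance analysis in the two-paths case, where translating Menger's bound from $G_C$ into $G$ requires carefully handling possible shortcuts between $P_1, P_2$ that exist in $G$ but not in $G_C$.
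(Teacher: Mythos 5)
There is a genuine gap in the two-paths case of your dichotomy. Because you apply \cref{thm:DistanceMengerForTwoPaths} inside $G_C = G[V(C)\cup N_G(C)]$, the failure case only gives $\dist_{G_C}(P_1,P_2)\geq 5K$, and this cannot be upgraded to the bound $\dist_G(P_1,P_2)\geq 5K$ that \cref{lem:FindingAFatK4} requires. Both $P_1$ and $P_2$ start in $N_1\subseteq B_1$ and end in $N_2\subseteq B_2$, and in $G$ their endpoints may be joined by a path of length $1$ or $2$ running through $B_1$ (or $B_2$); such connections are invisible to $G_C$, whose edge set omits $B_i\setminus N_G(C)$, and the radii $r_i$ are unrelated to $K$, so nothing prevents them. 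Your proposed patch --- that any short $G$-connection must exit through $N_G(C)$ with both exit points in the same $N_i$ --- describes exactly this situation but does not exclude it, so in the two-paths case you get no contradiction and the lemma is not proved (while the lemma may still hold there, since a $G$-ball can hit both $P_1$ and $P_2$ across $B_i$ even though no $G_C$-ball does). The paper's proof avoids this by running \cref{thm:DistanceMengerForTwoPaths} in the fattened graph $G' := G[C,1]\cup G[B_1\cup B_2,\lceil (5/2)K\rceil]$: since $G'$ contains $G[B_1]$ and $G[B_2]$, routes through the balls are already charged in $\dist_{G'}(Q_1,Q_2)\geq 5K$, and any $\mathring{Q}_1$--$\mathring{Q}_2$ path leaving $G'$ has length at least $2(\lceil(5/2)K\rceil+1)\geq 5K$, so the bound transfers to $G$; a shortest $N_1$--$N_2$ path through $C$ is still shortest in $G'$ because the $\lceil(5/2)K\rceil$-fattened balls are disjoint and non-adjacent. (Your separator case is fine and agrees with the paper.)

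The ``moreover'' part is also essentially unproved in your sketch. The case you handle ($P$ meets $B'$) is the easy one; in the main case one must build $P'$ by extending $P$ through $B_{3-i}$ and then through $C$ into $B'$, and verifying $\dist_G(P',G[C',1])\geq 5K$ is not a local argument around $u$. The paper takes a shortest $G[C',1]$--$P'$ path $S$ and distinguishes: if $S$ meets $B_G(u,129\cdot 5K)$, it is long because the extension pieces avoid $B'=B_G(u,130\cdot 5K)$ except in their last vertex; otherwise one uses that $B_G(u,129\cdot 5K)$ meets every $N_1$--$N_2$ path through $C$, hence separates $C'$ from the side containing $B_{3-i}$ inside $G[C,1]\cup G[B_1\cup B_2]$, forcing $S$ to cross from $B_i$ to $B_{3-i}$ outside that subgraph, which costs at least $5K$ since $\dist_G(B_1,B_2)\geq 5K+2$. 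This separation argument is the actual content of the ``moreover'' statement and is missing from your proposal.
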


\begin{figure}
    \centering
    \pdfOrNot{\def\svgwidth{0.6\columnwidth} 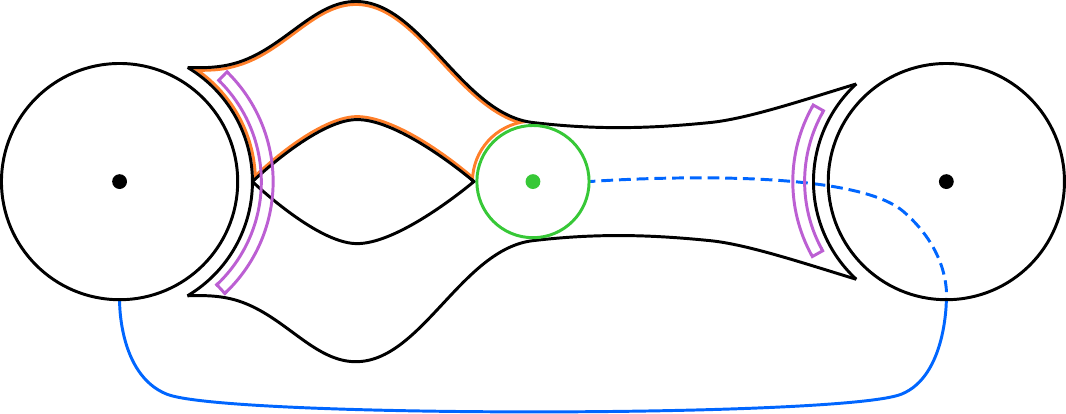}{\includesvg[width=0.6\columnwidth]{svg/ApplicationCoarseMenger.svg}}
    \caption{The situation in \cref{prop:BallMeetingAllPathsInComp}: A component $C$ of $G-(B_1 \cup B_2)$ and the ball $B' \supseteq B$ around $u$. Depicted in blue is the path $P$ and its dashed extension $P'$, as required for the `moreover' part applied to the orange component $C'$.}
    \label{fig:ApplicationCoarseMenger}
\end{figure}

\noindent See \cref{fig:ApplicationCoarseMenger} for a depiction of the situation in \cref{prop:BallMeetingAllPathsInComp}.

\begin{proof}
    Let $Q$ be a shortest $N_1$--$N_2$ path through $C$. 
    As $\dist_G(v_1,v_2) \geq r_1 + r_2 +5K+2$, the balls $B_G(B_1, \lceil(5/2)K\rceil)$ and $B_G(B_2, \lceil(5/2)K\rceil)$ are disjoint and not joined by an edge.
    Thus, $Q$ is a  shortest $N_1$--$N_2$ path in $G' := G[C,1] \cup G[B_1 \cup B_2, \lceil(5/2)K\rceil]$.
    By applying \cref{thm:DistanceMengerForTwoPaths} in $G'$ to the sets $N_1$ and $N_2$ with $d= 5K$, we obtain either a vertex $u$ of $Q \subseteq G[C,1]$ such that every $N_1$--$N_2$ path in $G'$ intersects $B_G(u, 129\cdot5K)$ or two $N_1$--$N_2$ paths $Q_1$ and $Q_2$ in $G'$ such that $\dist_{G'}(Q_1, Q_2) \geq 5K$. 

    In the former case we are done (except for the `moreover'-part) since $u$ lies on a shortest $N_1$--$N_2$ path through $C$ and $B':= B_G(u, 130\cdot5K) \supseteq B_G(u, 129\cdot5K)$; so suppose for a contradiction that the latter holds.
    Since $B_G(B_1, \lceil(5/2)K\rceil)$ and $B_G(B_2, \lceil(5/2)K\rceil)$ are disjoint and not joined by an edge, the internal vertices of any $N_1$--$N_2$ path in $G'$ are in $C$; in particular, $\mathring{Q_1}, \mathring{Q_2} \subseteq C$. Moreover, $\dist_G(Q_1, Q_2) \geq 5K$ because $\dist_{G'}(Q_1, Q_2) \geq 5K$ and any $C$-path in $G$ (and hence any $\mathring{Q}_1$--$\mathring{Q}_2$ path) meeting $G - G'$ has length at least $2\cdot(\lceil(5/2)K\rceil+1) \geq 5K+2$. Since also $\dist_G(Q_1 \cup Q_2, P) \geq \dist_G(G[C,1], P) \geq 5K$ by assumption, the paths $Q_1, Q_2, P$ are pairwise at least $5K$ apart. Thus, as $\mathring{Q}_1$ and $\mathring{Q}_2$ lie in the same component $C$ of $G - (B_1 \cup B_2)$, applying \cref{lem:FindingAFatK4} yields that $G$ has a $K$-fat $K_4$ minor, which is a contradiction.

    It remains to show the `moreover'-part. For this, let $i \in [2]$ and let $C'$ be a component of $G- (B_i \cup B')$ which attaches to both $B_i$ and $B'$. If $B'$ does not meet $B_{3-i}$, we extend the path $P$ to a $B_i$--$B'$ path $P'$ by adding first a $P$--$C$ path $P_1$ through $B_{3-i}$ and then a $P_1$--$B'$ path $P_2$ in~$C$. Otherwise, we extend $P$ by a $P$--$B'$ path $P_1$ through $B_{3-i}$.
    We claim that $\dist_G(P', G[C',1]) \geq 5K$.
    For this, let $S$ be a shortest $G[C',1]$--$P'$ path in $G$. 
    We show that $S$ has length at least $5K$. Then $\dist_G(G[C',1], P') \geq ||S|| \geq 5K$, which yields the claim.
    Since $\dist_G(G[C',1], P) \geq \dist_G(G[C,1],P) \geq 5K$ as $C' \subseteq C$, we are done if $S$ ends in $P$. Hence, we may assume that $S$ ends in $P_1$ or $P_2$. 

    We distinguish two cases.
    First, assume that $S$ meets $B_G(w, 129 \cdot 5K) \subseteq B'$.
    Since $P_1 \cup P_2$ does not meet $B' = B_G(B_G(w, 129 \cdot 5K), 5K)$ except in its last vertex, we have $\dist_G(P_1 \cup P_2, B_G(w, 129 \cdot 5K)) \geq 5K$. Hence, as $S$ meets both $P_1 \cup P_2$ and $B_G(w, 129 \cdot 5K)$ by assumption, it has length at least~$5K$.

    Now assume that $S$ avoids $B_G(w, 129 \cdot 5K)$. Note first that also $C'$ and $P_1 \cup P_2$ avoid $B_G(w, 129 \cdot 5K)$. Moreover, $C', P_1 \cup P_2 \subseteq G[C,1] \cup G[B_1 \cup B_2]$. Since $B_G(w, 129 \cdot 5K)$ separates $B_1$ and $B_2$ in $G' \supseteq G[C,1] \cup G[B_1 \cup B_2]$, and $C' \subseteq C$ attaches to $B_i$ while $P_1 \cup P_2$ meets $B_{3-i}$, the two subgraphs $C'$ and $P_1 \cup P_2$ lie in distinct components of $(G[C,1] \cup G[B_1 \cup B_2]) - B_G(w, 129 \cdot 5K)$. Since $S$ avoids $B_G(w, 129\cdot5K)$ and $C$ is a component of $G - (B_1 \cup B_2)$, it follows that $S$ leaves $G[C,1] \cup G[B_1 \cup B_2]$ through $B_i$ and enters it again through $B_{3-i}$. Hence, $S$ contains a $B_1$--$B_2$ path. Since $\dist_G(v_1, v_2) \geq 2 r_1 + r_2 + 5K + 2$, we have $\dist_G(B_1,B_2) \geq 5K+2$, and thus~$S$ has length at least~$5K$.
\end{proof}

\begin{lemma}\label{lem:ComponentAttachingToTwoBalls}
    Let $G$ be a graph, and $K, r_1, r_2 \in \N_{\geq 1}$ such that $r_1 \leq r_2$. 
    Let $v_1, v_2$ be two vertices of $G$ that are at least $r_1 + r_2 + 5K + 2$ apart. Set $B_i := B_G(v_i, r_i)$ for $i \in [2]$, let $C$ be a component of $G - (B_1 \cup B_2)$ which attaches to $B_1$ and $B_2$, and suppose there exists a $B_1$--$B_2$ path $P$ in $G$ such that $\dist_G(P, G[C,1]) \geq 5K$.
    
    If $G$ has no $K$-fat $K_4$ minor, then there exists an induced subgraph $Y$ of $G[C,1]$ which admits an honest decomposition $(H, \cV)$ modelled on a graph $H \in \cH_{SP}$ with terminals $h_1, h_2$ such that
    \begin{enumerate}[label=\rm{(\alph*)}]
        \item \label{itm:CompAttToTwoBalls:FirstBag} $V_{h_i} = N_G(C) \cap B_i$ for $i \in [2]$,
        \item \label{itm:CompAttToTwoBalls:Rad1}
        $\rad_G(V_h) \leq R'_0(K)$ for all $h \in V(H)$ with $h \notin B_H(\{h_1, h_2\},1)$,
        \item \label{itm:CompAttToTwoBalls:Rad2}
        $\dist_G(v_i, v) \leq r_i + 2\cdot R_0(K) + 5K + 1$ for $i \in [2]$ and all $v \in V_{h}$ with $h \in B_H(h_i, 1)$,
        \item \label{itm:CompAttToTwoBalls:RadialSpread}
        $\irs(\cH) \leq 3$; moreover, $\dist_H(h_i, h) \leq 3$ for all $h \in V(H_v)$ and $v \in N_G(C) \cap B_i$ for $i \in [2]$, and
        \item \label{itm:CompAttToTwoBalls:CompsNhood} for every component $C'$ of $G-Y$ which meets, or equivalently is contained in, $C$, its neighbourhood $N_G(C')$ is contained in some bag $V_h$ of $(H, \cV)$.
    \end{enumerate}
\end{lemma}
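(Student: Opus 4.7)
The plan is to apply \cref{prop:BallMeetingAllPathsInComp} to $(B_1, B_2, C, P)$ to obtain $u \in B_G(C, 1)$ and a ``cutting ball'' $B' := B_G(u, R_0(K))$ that intersects every $N_1$--$N_2$ path through $C$, where $N_i := N_G(C) \cap B_i$. I will build $(H, \cV)$ around three ``spine'' nodes $h_1, g, h_2$: the terminals $h_1, h_2$ carry $V_{h_i} := N_i$, the central node $g$ carries $V_g := B' \cap (V(C) \cup N_G(C))$, and recursively built decompositions of the remaining pieces of $C$ will be attached to these. To prepare this, I first analyse the components $\cD$ of $G[V(C) \setminus B']$: the cutting property forbids any $D \in \cD$ from attaching in $G$ to both $B_1$ and $B_2$, while connectedness of $C$ together with $V(C) \cap B' \neq \emptyset$ forbids $D$ from avoiding $\{B_1, B_2, B'\}$ altogether. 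So $\cD$ partitions into $\cD_{1, B'}, \cD_{2, B'}$, and $\cD_{B'\text{-only}}$ according to which parent ball(s) $D$ attaches to.

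For each $D \in \cD_{i, B'}$ I would recursively apply the lemma to the pair $(B_i, B')$ and the component $D$, using the $B_i$--$B'$ path $P'$ furnished by the moreover part of \cref{prop:BallMeetingAllPathsInComp} (which, as its proof reveals, satisfies $\dist_G(P', G[D, 1]) \geq 5K$). The recursion produces $(H^D, \cV^D)$ with support $Y^D \subseteq G[D, 1]$, modelled on $H^D \in \cH_{SP}$ with terminals $h_i^D, h_g^D$ and $V^D_{h_i^D} = N_G(D) \cap B_i \subseteq V_{h_i}$, $V^D_{h_g^D} = N_G(D) \cap B' \subseteq V_g$. Then I glue by identifying $h_i^D$ with $h_i$ and $h_g^D$ with $g$ for every such $D$; equivalently, $H$ is the parallel composition at $(h_1, g)$ of all the $H^D$ for $D \in \cD_{1, B'}$, series-composed at $g$ with the parallel composition at $(g, h_2)$ of all the $H^D$ for $D \in \cD_{2, B'}$. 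By \cref{obs:ConstrOfSPGraphs}, $H$ then lies in $\cH_{SP}$. For finite $G$ the recursion terminates since $|V(D)| < |V(C)|$.

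Verification of (a)--(e) will track the recursion. (a) holds by construction. For (b), $V_g \subseteq B_G(u, R_0(K) + 1)$ has $\rad_G(V_g) \leq R_0(K) + 1 \leq R'_0(K)$, and every other non-terminal-adjacent bag inherits the bound from the recursive (b) or (c). (c) is inherited from the recursive (c) at each $h_i^D = h_i$. For (d), the recursive distance bound $\dist_{H^D}(h_i^D, h) \leq 3$ (respectively $\dist_{H^D}(h_g^D, h) \leq 3$) transfers after identification to $\dist_H(h_i, h) \leq 3$ (respectively $\dist_H(g, h) \leq 3$), giving $\irs(\cH, v) \leq 3$ for $v \in V_{h_i}$ or $v \in V_g$; for other $v$ the bound is inherited directly. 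For (e), components of $G - Y$ sitting within some $D \in \cD_{i, B'}$ are covered by the recursive (e); each $D \in \cD_{B'\text{-only}}$ is itself such a leftover with $N_G(D) \subseteq V_g$, since its $V(C)$-neighbours lie in $B' \cap V(C)$ and it does not attach to $B_1 \cup B_2$.

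The main obstacles I anticipate are twofold. First, the recursive call demands $\dist_G(v_i, u) \geq r_i + R_0(K) + 5K + 2$, which need not be automatic; in particular it may fail when $u$ lies close to $v_i$ (for instance if $u \in N_i \subseteq B_i$). In that degenerate case the plan would be not to recurse but to absorb $B' \cap (V(C) \cup N_G(C))$ into a single extended bag adjacent to $h_i$, exploiting that then $V_g \subseteq B_G(v_i, r_i + 2R_0(K) + 5K + 1)$, matching the bound demanded by (c). Second, for infinite $G$ the $|V(C)|$-induction fails; here I would instead build a sequence of partial decompositions $\cH^0 \subseteq \cH^1 \subseteq \cdots$ by iterating the cutting procedure and pass to the limit via \cref{lem:LimitGD}. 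This second obstacle is exactly the point that the footnote after the abstract singles out as specific to this lemma.
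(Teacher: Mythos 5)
Your overall plan — apply \cref{prop:BallMeetingAllPathsInComp} to obtain a cutting ball $B'=B_G(u,R_0(K))$, split $C$ into components relative to $B_1,B_2,B'$, and recurse on those that attach to $\{B_1,B'\}$ or $\{B',B_2\}$ — is exactly the paper's ``finite sketch'' of this lemma. The non-degenerate recursion, the gluing at $g$, and the inheritance of \cref{itm:CompAttToTwoBalls:Rad1}--\cref{itm:CompAttToTwoBalls:RadialSpread} are all worked out correctly in the proposal. However, the two obstacles you identify at the end are precisely where the real content of the proof lies, and your sketch of how to surmount the first one has a genuine gap.

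The degenerate case is more pervasive than you suggest. Your plan is: if $\dist_G(v_i,u)<r_i+R_0(K)+5K+2$, do not recurse on the $B_i$-side and instead put $V_g$ into an extended bag adjacent to $h_i$. But \cref{itm:CompAttToTwoBalls:CompsNhood} then fails for the un-recursed components $D\in\cD_{i,B'}$: such a $D$ has $N_G(D)\subseteq N_i\cup(B'\cap V(C))$, which is split between $V_{h_i}$ and $V_g$. For \cref{itm:CompAttToTwoBalls:CompsNhood} to hold, the extended bag must be $V_{h_i}\cup V_g$ (not just an enlarged $V_g$), and then its radius must still satisfy \cref{itm:CompAttToTwoBalls:Rad2}; this is true but needs to be verified. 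More importantly, you do not address the case where \emph{both} $\dist_G(v_1,u)$ and $\dist_G(v_2,u)$ fall below the threshold, or, worse, the fact that the same degeneracy re-appears at every level of recursion: each new cut vertex $u_D$ may in turn be too close to one of its two parent centres. In a recursive formulation one would need an invariant guaranteeing that the bag associated with each ``fused'' pair has radius controlled by \cref{itm:CompAttToTwoBalls:Rad1}/\cref{itm:CompAttToTwoBalls:Rad2} while simultaneously containing $N_G(D)$ for every short $D$, and you never formulate such an invariant.

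The paper sidesteps this entirely by not recursing: it iteratively builds a graph $H'$ whose vertices are literal vertices of $G$ (starting from $h_1:=v_1$, $h_2:=v_2$ and adding the cut vertices $u_D$ returned by \cref{prop:BallMeetingAllPathsInComp}), and maintains for each edge $ww'$ the distinction \emph{long} ($\dist_G(w,w')\geq r_w+r_{w'}+5K+2$) versus \emph{short}. Only long components get a new cut vertex; short ones are simply carried along. After passing to the limit, the construction deletes the long edges and then \emph{subdivides every remaining (short) edge once}, assigning the subdivision node the union $V_w\cup V_{w'}$ of its two neighbouring bags. This subdivision step is the key device: it simultaneously handles \cref{GraphDecomp:H2} across fused bags, gives a single bag containing $N_G(D)$ for every short leftover component, and keeps the subdivision bag's radius bounded precisely because the edge is short. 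Your proposal is missing this device, and without it (or an equivalent) the verification of \cref{itm:CompAttToTwoBalls:CompsNhood} in the cascading degenerate cases does not go through. The second obstacle (infinite graphs) you correctly attribute to \cref{lem:LimitGD}, but note that the paper's limit is over the explicitly constructed sequence $H^0\subseteq H^1\subseteq\dotsb$ of graphs on $G$-vertices, not over the recursive calls of the lemma itself; turning your recursion into such a sequence is again non-trivial and depends on the long/short bookkeeping you have not set up.
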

 
We remark that the proof of \cref{lem:ComponentAttachingToTwoBalls} is the only point in this paper where it actually makes a difference that we not only consider finite but also infinite graphs. In fact, for finite graphs, a much simpler inductive argument yields the desired \gd\ $(H, \cV)$: 

\begin{proof}[Proof sketch of \cref{lem:ComponentAttachingToTwoBalls} for finite graphs]\let\qed\relax
    We may apply \cref{prop:BallMeetingAllPathsInComp} to $C$, $v_1, v_2$ and $K, r_1, r_2$, which yields a vertex $u \in B_G(C,1)$. 
    We now apply induction to the (now strictly smaller since $C$ is finite) components $D$ of $C - (B_1 \cup B')$ and $C-(B_2 \cup B')$ for $B' := B_G(u, 130\cdot 5K)$ that attach to both balls, to obtain \gd s $(H^D, \cV^D)$ with terminals $h_1, u$ or $u, h_2$ of $H^D$, respectively. We then obtain the desired \gd\ $(H, \cV)$ by gluing the $(H^D, \cV^D)$ together in $u$ and $h_1$ or $h_2$.
\end{proof}

\begin{proof}[Proof of \cref{lem:ComponentAttachingToTwoBalls} for arbitrary graphs]
    We define the desired \gd\ $(H, \cV)$ recursively (see \cref{fig:CompAttToTwoBalls} for a sketch). 
    Initialise $H^0$ as a $K_2$ on vertices $h_1 := v_1$ and $h_2 := v_2$. In particular, $H^0 \in \cH_{SP}$ with terminals $h_1$ and $h_2$. Set $V_{h_i} := N_G(C) \cap B_i$ and $r_{h_i} := r_i$ for $i \in [2]$, and note that $N_G(C) \subseteq V_{h_1} \cup V_{h_2}$. 
    Set further $\cC^0 := \{C\}$, $w_1^C := h_1$ and $w_2^C := h_2$.
    We warn the reader that in the following the vertices of the $H^i$ are vertices of $G$, i.e.\ $V(H^i) \subseteq V(G)$. Hence, we may divert from our usual convention, and denote the vertices of $H$ by $u,w$.
    
    Now suppose that for $n \in \N$ we have constructed graphs $H^0 \subseteq \dotsc \subseteq H^n \in \cH_{SP}$ with terminals~$h_1$ and~$h_2$, and sets $V_u \subseteq B_G(C,1)$ for all $u \in V(H^n)$.
    Assume further that, for every $i \leq n$, we fixed 
    \begin{enumerate}[label=\rm{(\Roman*)}]
        \item\label{itm:fixcomponents} the collection $\cC^i$ of all components $D$ of $C - \bigcup_{u \in H^i} V_u$ whose neighbourhood is not contained in a $V_u$ for some $u \in V(H^i)$,
        \item\label{itm:edgeforcomp} for every $D \in \cC^i$, an edge $w_1^{D}w_2^{D}$ of $H^i$,
        \item\label{itm:newvertexforcomp} for every $D \in \cC^{i-1} \setminus \cC^i$,  some $u_{D} \in V(H^i) \setminus V(H^{i-1})$ such that $w_1^Du_D, u_Dw_2^D \in E(H^i)$, if $i \geq 1$,
        \item\label{itm:newcompforvertex} for every $u \in V(H^i) \setminus V(H^{i-1})$, some $C_u \in \cC^{i-1}$  with $u \in B_G(C_u,1)$ and $u_{C_u} = u$, if $i \geq 1$, and
        \item \label{itm:Proof5} for all $u \in V(H^i) \setminus \{h_1, h_2\}$, the bag $V_u := B_G(u, R_0(K)) \cap B_G(C_u,1)$ and $r_u := R_0(K)$,
    \end{enumerate}  
    such that 
    \begin{enumerate}[label=\rm{(\roman*)}]
        \item \label{itm:Proof2} for every edge $e$ of $G[\bigcup_{u \in H^i} V_u]$ that is not an edge of $\bigcup_{u \in H^i} G[V_u]$ there is an edge $uu' \in E(H^i)$ such that $e \in G[V_{u} \cup V_{u'}]$;
        \item \label{itm:Proof3} every $D \in \cC^i$ is a component of $G - (V_{w^{D}_1} \cup V_{w^{D}_2})$;
        \item \label{itm:Proof4} for every $u \in V(H^i) \setminus V(H^{i-1})$ every $V_{w^{C_u}_1}$--$V_{w^{C_u}_2}$ path through $C_u$ intersects $B_G(u, 129\cdot 5K)$;
        \item \label{itm:Proof6} if $i \geq 1$, then for every $D \in \cC^{i-1}$ we have $D \notin \cC^i$ if and only if $\dist_G(w^{D}_1, w^{D}_2) \geq r_{w^{D}_1} + r_{w^{D}_2} + 5K + 2$;
        \item \label{itm:Proof3b} for every $D \in \cC^i \setminus \cC^{i-1}$ and the unique component $D' \in \cC^{i-1} \setminus \cC^{i}$ with $D \subseteq D'$  we have $w^{D}_1 = w^{D'}_1$, $w^{D}_2 = u_{D'}$ or $w^{D}_1 = u_{D'}$, $w^{D}_2 = w^{D'}_2$;
        \item \label{itm:Proof3c} for every $D \in \cC^i$ there exists a $B_G(w^{D}_1, r_{w^{D}_1})$--$B_G(w^{D}_2, r_{w^{D}_2})$ path $P$ with $\dist_G(P, G[D,1]) \geq 5K$.
    \end{enumerate}
    In the following, we will refer to components $D \in \cC^n$ as in \cref{itm:Proof6}, that is those which satisfy $\dist_G(w^{D}_1, w^{D}_2) \geq r_{w^{D}_1} + r_{w^{D}_2} + 5K + 2$, as \defn{long}.

    We now obtain $H^{n+1}$ as follows. 
    For every long component $D \in \cC^n$, let $u_{D}\in B_G(D,1)$ be a vertex obtained from the application of \cref{prop:BallMeetingAllPathsInComp} to $D$, $w_1^{D}, w_2^{D}$ and $r_{w^{D}_1}, r_{w^{D}_2}$, which is possible due to \cref{itm:Proof3c}. 
    Then $H^{n+1}$ is obtained from $H^n$ by adding for every long component $D \in \cC^n$ the vertex $u_{D}$ and the edges $w_1^{D}u_{D}$ and $u_{D}w_2^{D}$. 
    
    Let the collection $\cC^{n+1}$, the sets $V_{u_D}$ and the integer $r_{u_D}$ be as required by \cref{itm:fixcomponents} and \cref{itm:Proof5}, respectively.
    The construction obviously ensures \cref{itm:newvertexforcomp} and \cref{itm:Proof6}.
    For \cref{itm:newcompforvertex}, we set $C_{u_D} := D$ for every $u_D \in V(H^{n+1}) \setminus V(H^n)$, which also yields  \cref{itm:Proof4}.
    For \cref{itm:edgeforcomp}, let $D' \in \cC^{n+1} \setminus \cC^{n}$.
    Then $D'$ is contained in a long component $D \in \cC^n$.
    As the neighbourhood of $D'$ is not contained in a $V_u$ for $u \in V(H^{n+1})$, the ball $B_G(u_D,129 \cdot 5K)$ meets every $V_{w_1^{D}}$--$V_{w_2^D}$ path through $D$ and $B_G(u_D,129 \cdot 5K) \cap V(D) \subseteq V_{u_D}$, the component $D' \subseteq D$ attaches to $V_{u_D}$ and precisely one of $V_{w_1^D}$ and $V_{w_2^D}$, say $V_{w_1^D}$. 
    Then we set $w_1^{D'} := w_1^D$ and $w_2^{D'} := u_D$, which immediately ensures \cref{itm:Proof3} and \cref{itm:Proof3b}.
    The moreover-part of \cref{prop:BallMeetingAllPathsInComp} yields \cref{itm:Proof3c}.
    For \cref{itm:Proof2}, we first note that every edge of $G[\bigcup_{u \in H^{n+1}} V_u]$ that has no endvertex in $V_{u_D} \cap V(D)$ for any $D \in \cC^n$ was already an edge of $G[\bigcup_{u \in H^{n}} V_u]$, and hence satisfies \cref{itm:Proof2} by \cref{itm:Proof2} of $(H^n, \cV^n)$. Now let $e$ be any edge of $G[\bigcup_{u \in H^{n+1}} V_u]$ which is not in $G[\bigcup_{u \in H^{n}} V_u]$. Then there is $D \in \cC^n$ such that $e =xy$ has one endvertex, say $x$, in $V_{u_D} \cap V(D)$, so $e$ has its other endvertex $y$ in $G[D,1]$.
    Hence, by \cref{itm:Proof3} of $(H^n, \cV^n)$, the vertex $y$ is in $V_{w_1^D} \cup V_{w_2^D}$. By construction, $w_1^Du_D, u_Dw_2^D \in E(H^{n+1})$, and hence $e$ satisfies \cref{itm:Proof2}.
    Also $H^{n+1}$ is still in $\cH_{SP}$ with terminals $h_1, h_2$ by \cref{itm:edgeforcomp} and \cref{obs:ConstrOfSPGraphs}.
     This completes the verification that $H^{n+1}$ is as desired.

    \begin{figure}[ht]
        \centering
            \pdfOrNot{\def\svgwidth{0.65\columnwidth} 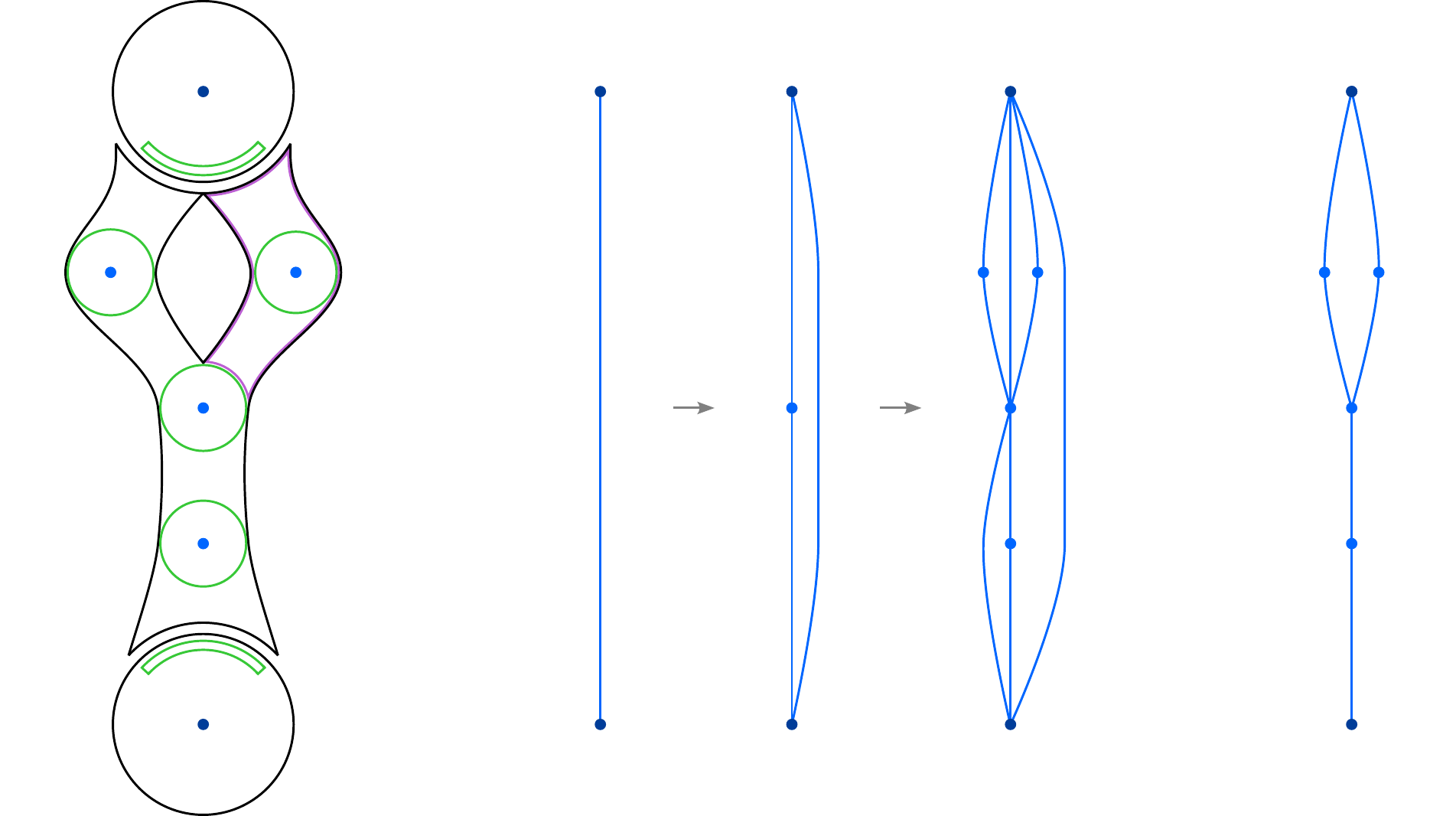}{\includesvg[width=0.65\columnwidth]{svg/limitseriesparallel.svg}}
        \caption{Depicted are the graphs $H^0, H^1, H^2, H$ in blue and the bags $V_u$ for the nodes $u$ of $H^2$ for the component $C$ of $G-(B_1 \cup B_2)$ in green. In this example, we have $w_1^{D} = v_1, w_2^{D} = u_1^1$ for the component $D = C_{u_2^2}$ in purple, and $H' = H^2$.}
        \label{fig:CompAttToTwoBalls}
    \end{figure}

    Let $H' := \bigcup_{i \in \N} H^i$ be the limit of the $H^i$. 
    By \cref{obs:ConstrOfSPGraphs}, we have $H' \in \cH_{SP}$ with terminals $h_1, h_2$.
    Now let $H$ be obtained from $H'$ by first deleting all edges $uu'$ of $H'$ with $\dist_G(u,u') \geq r_{u} + r_{u'} + 5K + 2$ and then subdividing each remaining edge precisely once. By \cref{obs:ConstrOfSPGraphs}, $H \in \cH_{SP}$ with terminals $h_1$ and $h_2$.
    We claim that $(H, \cV)$ with the $V_h$ already defined for all $u \in V(H')$ and $V_h := V_u \cup V_w$ for all new subdivision vertices $h$ on an edge $uw$ of $H'$ is a \gd\ of $Y := G[\bigcup_{u \in H} V_u] \subseteq G[C,1]$ as desired. 
 
    We first show that $(H, \cV)$ is a \gd\ of $Y$. Indeed, \cref{GraphDecomp:H1} holds, as the parts $G[V_u]$ cover $Y$ by the definition of $Y$ and the edges relevant for \cref{itm:Proof2} have not been deleted. Also $(H, \cV)$ satisfies \cref{GraphDecomp:H2}: 
    Let $v$ be in $V(Y)$, and let $n$ be minimal such that $v \in \bigcup_{h \in H^n} V_h$. Assume that $n \geq 1$. Then $v \in V_{h_v}$ for some $h_v \in V(H^n) \setminus V(H^{n-1})$. By \cref{itm:Proof5} and the minimality of $n$, the vertex $v$ lies in $C_{h_v} \in \cC^{n-1}$. Thus, the choice of $h_v$ is unique by \cref{itm:newcompforvertex} and \cref{itm:Proof5}.
    If $n= 0$, then we have $v \in N_G(C) \cap B_i$ for a unique $i \in [2]$ and $h_v = h_i$.
    We first prove that $H'_v = H'[\{h \in V(H') \mid v \in V_h\}]$ is contained in $H'[h_v,1]$.
    
    For this, let $h' \in V(H') \setminus \{ h_v \}$ with $v \in V_{h'}$ be given, and let $m \in \N$ be minimal such that $C_{h'} \in \cC^m$. 
    Since $n$ is minimal and $h_v$ is unique, we have that $m > n$ and that $C_{h'}$ attaches to $v$.
    For every $\ell$ with $m \geq \ell > n$, let $C^\ell$ be the unique component in $\cC^\ell$ which contains $C_{h'}$; in particular, $C_{h'} = C^m$.
    Since $C_{h'} \in \cC^m \setminus \cC^{m-1}$, it follows from \cref{itm:fixcomponents}, \cref{itm:Proof6} and \cref{itm:Proof5} that  no $C^\ell$ is in $\cC^{\ell-1}$.
    Also, every $C^\ell$ with $\ell > n$ attaches to $v$ as $v \in V_{h_v}$ and $C^m = C_{h'}$ attaches to $v$.
    By iteratively combining this with \cref{itm:Proof3} and \cref{itm:Proof3b}, we obtain that $h_v = w_2^{C^{n+1}}  = \dots = w_2^{C^m}$ (after possibly swapping the names of $w_1^{C^\ell}$ and $w_2^{C^\ell}$).
    Moreover, by \cref{itm:newcompforvertex}, $h' = u_{C_{h'}} = u_{C^{m}}$, and thus $h_vh' = w_2^{C^m}u_{C^m}$ is an edge of $H'$ by \cref{itm:newvertexforcomp}, which completes the proof that $H'_v \subseteq H'[h_v, 1]$. 
    
    Then \cref{GraphDecomp:H2} of $(H, \cV)$ follows:
    For every node $h'$ of $H'$ with $v \in V_{h'}$, since $\dist_G(h_v, h') \leq r_{h_v} + r_{h'}$ as $V_{h_v}$ and $V_{h'}$ intersect in $v$, we did not delete the edge $h_vh'$ in the construction of $H$. 
    Since $H$ is obtained from $H'$ by subdividing each remaining edge precisely once, and since we associated to every subdivision vertex $u$ of $H$ the union $V_u$ of the two bags associated with its neighbours, $H'_v \subseteq H'[h_v, 1]$ implies that $H_v = H[\{h \in V(H) \mid v \in V_h\}]$ is contained in $H[h_v, 3]$.\footnote{We remark that $H_v$ is not necessarily contained in $H[h_v, 2]$: for any given $h' \in H'[h_v, 1]$ and any edge $e \neq h'h_v$ incident with $h'$, the bag corresponding to the subdivision vertex of $e$ in $H$ also contains $v$ but has distance $3$ to $h_v$.} In particular, $H_v$ is connected, which yields \cref{GraphDecomp:H2}. Moreover, $\irs((H,\cV)) \leq 3$ and also \cref{itm:CompAttToTwoBalls:RadialSpread} follows immediately.

    We now check the other required properties.
    By the definition of $V_{h_1}$ and $V_{h_2}$ at the beginning, $(H, \cV)$ satisfies \cref{itm:CompAttToTwoBalls:FirstBag}.
    We claim that \cref{itm:CompAttToTwoBalls:Rad1} and \cref{itm:CompAttToTwoBalls:Rad2} hold.
    Indeed, for every $u \in V(H')$ we have $\dist_G(v,u) \leq r_u$ for all $v \in V_u$, where $r_u = R_0(K)$ for $u \neq h_1,h_2$ and $r_u = r_i$ for $u = h_i$.
    Now let $u \in V(H)$ be a subdivision vertex of an edge $ww'$ of $H'$, say with $r_w \geq r_{w'}$.
    Then $d_G(v,w) \leq r_w + 2 r_{w'} + 5K + 1$, which yields \cref{itm:CompAttToTwoBalls:Rad1} and \cref{itm:CompAttToTwoBalls:Rad2}.
    
    It thus remains to verify \cref{itm:CompAttToTwoBalls:CompsNhood}. For this, suppose towards a contradiction that there is a component $C'$ of $G-Y$ which meets $C$ and whose neighbourhood is not contained in some $V_h$. In particular, $C'$ is a component of $C-Y$. Now first suppose that there are no two vertices in $H'$ whose union of their bags contains $N_G(C')$. Then there are three vertices $u_1, u_2, u_3 \in N_G(C)$ and nodes $g_1, g_2, g_3$ such that $V_{g_i}$ contains $u_i$ but no other $u_j$. Pick some $n$ such that all $g_i$ are contained in $V(H^n)$. Then the component of $C - \bigcup_{h \in H^n} V_h$ that contains $C'$ contradicts that $H^n$ satisfied \cref{itm:Proof3}. Thus, there are nodes $g_1, g_2 \in V(H')$ such that $N_G(C') \subseteq V_{g_1} \cup V_{g_2}$. In particular, $C'$ is a component of $C - \bigcup_{h \in H^n} V_h$ for all $n \in \N$ with $g_1, g_2 \in V(H^n)$. Thus, $C' \in \cC^n$ for all such $n$, as its neighbourhood is not even contained in some $V_h$ with $h \in V(H')$. Let $m$ be the minimal such $n$. By \cref{itm:edgeforcomp} and \cref{itm:Proof3}, $C'$ is a component of $G-(V_g \cup V_{g'})$ and $gg'= w_1^{C'}w_2^{C'}$ is an edge of $H^n$. If we did not delete $gg'$ when constructing $H$, then we have a contradiction, as the bag $V_u$ associated with the subdivision vertex $u$ of $H$ on the edge $gg'$ then contains the entire neighbourhood of $C'$. Otherwise, we have $\dist_G(g, g') > r_{g} + r_{g'} + 5K + 2$, which by \cref{itm:Proof6} contradicts that $C' \in \cC^{m+1}$.
\end{proof}

\begin{proof}[Proof of \cref{lem:CompWithBipOfNhood}]
    Set $B_i := B_G(v_i, R_1(K))$ for $i \in [2]$. Further, let $P_i$ be a shortest $w$--$B_i$ path and set $W := P_1 \cup P_2$. Then $W$ is a $B_1$--$B_2$ walk, and hence $W$ contains a $B_1$--$B_2$ path $P$. By construction, we have $V(P) \subseteq V(W) \subseteq B_G(w, r-R_1(K)+1)$ and hence $\dist_G(P,G[C,1]) \geq R_1(K)-1 \geq 5K$. Since also $\dist_G(v_1, v_2) \geq 2R_1(K) + 5K + 1$ by assumption, we may apply \cref{lem:ComponentAttachingToTwoBalls} to every component $C'$ of $G-(B_1 \cup B_2)$ that has a neighbour in both $B_1$ and $B_2$ and that meets, or equivalently is contained in,~$C$. Since we are done if $G$ has a $K$-fat $K_4$ minor, we may assume that \cref{lem:ComponentAttachingToTwoBalls} yields for every such~$C'$ a partial \gd\  $\cH^{C'} = (H^{C'}, \cV^{C'})$ of $G$ with support $Y^{C'} \subseteq G[C',1] \subseteq G[C,1]$ that satisfies \cref{itm:CompAttToTwoBalls:FirstBag} to \cref{itm:CompAttToTwoBalls:CompsNhood}.

    We now define the desired decomposition $(H, \cV)$ of $Y := G[(\bigcup V(Y^{C'})) \cup ((B_1 \cup B_2) \cap V(C)) \cup N_G(C)]$. 
    The graph $H$ is obtained from the disjoint union of the $H^{C'}$ and three new vertices $h_1, h_2$ and $g$ by identifying all $h^{C'}_1$ with $h_1$ as well as all $h^{C'}_2$ with $h_2$ and adding the edges~$h_1g$ and~$h_2g$. Note that this is a parallel composition of graphs~$H^{C'} \in \cH_{SP}$ and a path of length $2$; so~$H$ is again in $\cH_{SP}$ with terminals $h_1$ and $h_2$ by \cref{obs:ConstrOfSPGraphs}; in particular, $H \in \Forbminor(K_4)$.
    Then $(H, \cV)$ with $V_{g} := N_G(C)$, $V_{h_i} := (B_i \cap V(C)) \cup (N_G(C) \cap B_G(B_i, 1))$ for $i \in [2]$ and $V_h := V^{C'}_h$ for all other $h \in V(H)$ where $C'$ is the unique component with $h \in H^{C'}$ is a decomposition of $Y$.
    $(H,\cV)$ obviously satisfies \cref{GraphDecomp:H1}, as the definition of the $V_{h_i}$ ensures that the $G[V_{h_i}]$ cover the edges from $\partial_G C$ to $N_G(C)$.
    Moreover, \cref{GraphDecomp:H2} holds for $(H,\cV)$, since the $V_{h_i}$ are disjoint from all components $C'$ of $G-(B_1 \cup B_2)$ which meet $C$. 

    Since $\partial_G C \subseteq B_1 \cup B_2$, we have $\partial_G C \subseteq V_{h_1} \cup V_{h_2} \subseteq V(Y)$.
    By construction, $(H, \cV)$ clearly satisfies \cref{itm:CompWithBipOfNhood:FirstBag} for $g \in V(H)$.
    Moreover, by \cref{itm:CompAttToTwoBalls:Rad1} and \cref{itm:CompAttToTwoBalls:Rad2} of the $\cH^{C'}$ and the definition of $V_{h_i}$, we have $\rad(V_h) \leq R_1(K) + 2R_0(K) + 5K + 2$ for all $h \neq g \in V(H)$, and thus $(H, \cV)$ satisfies \cref{itm:CompWithBipOfNhood:Radius}. 
    By construction, \cref{itm:CompWithBipOfNhood:Spread} follows from property \cref{itm:CompAttToTwoBalls:RadialSpread} of the $\cH^{C'}$. 
    
    Lastly, we show that $(H, \cV)$ satisfies \cref{itm:CompWithBipOfNhood:CompsNhood}. 
    Every component of $G-Y$ which does not meet $C$ is contained in $N_G(C) = V_g$. Now let $D$ be an arbitrary component of $G-Y$ which meets, or equivalently is contained in, $C$. Let $C'$ be the component of $G-(B_1 \cup B_2)$ which contains $D$. If $C'$ attaches only to $B_1$ or only to $B_2$, then $N_G(D)$ is contained in $V_{h_1}$ or $V_{h_2}$, respectively. So we may assume that $C'$ attaches to both $B_1$ and $B_2$. Then $N_G(D)$ is contained in $V^{C'}_h$ for some $h \in V(H^D)$ by \cref{itm:CompAttToTwoBalls:CompsNhood}.
    Thus, by construction of $(H, \cV)$ we have that $N_G(D) \subseteq V_h$ for some $h \in V(H) \setminus \{g\}$, and hence $\rad_G(V_h) \leq R_1(K) + 2R_0(K) + 5K + 2$, as we have shown for \cref{itm:CompWithBipOfNhood:Radius} above.
    Hence, $(H, \cV)$ is as desired.
\end{proof}

\subsection{Proof of \texorpdfstring{\cref{lem:CompWithThreeVertices}}{Lemma 5.3}} \label{subsec:Lemma2}

Let us briefly sketch the proof of \cref{lem:CompWithThreeVertices}. For this, let us first recall its premises: Let $G$ be a graph with no $K$-fat $K_4$ minor for $K \in \N_{\geq 1}$, let $B$ be some ball in $G$ around a vertex $w$ of radius $r \in \N$, and let $C$ be a component of $G-B$. 
Suppose that $C^*$ is a component of $C-B_G(B,22K+1)$ such that there are three vertices in $\partial_G C^*$ that are pairwise far apart in $G$. We first find a vertex $w' \in V(C^*)$ such that there are at least three $B_G(w', 22K)$--$B_G(w, r - \ell(K))$ paths that are pairwise at least $11K$ apart (\cref{lem:BallAndThreeComps}). These paths then have to lie in distinct components of $G' := G - (B_1(w', 22K) \cup B_G(w, r - \ell(K)))$ by \cref{lem:FindingAFatK4}. In particular, we can conclude that, for every component of $G'$ that attaches to both balls, one of those three paths is still at least $5K$ away. We then apply \cref{lem:ComponentAttachingToTwoBalls} to $B_1 := B_G(w',22K)$ and $B'_2 := B_G(w, r-\ell(K))$ to obtain, for every component $D$ of $G'$ that attaches to $B_1$ and $B'_2$, a partial decomposition modelled on a graph $H^{D} \in \cH_{SP}$ with terminals $h_1^{D}, h_2^{D}$. We modify these decompositions by contracting all nodes of $H^D$ whose bags contain vertices from $B_G(B,1)$ to a single vertex $h_2$, which ensures that we may enlarge the bag associated with $h_2$ so that it contains $B$ (\cref{lem:CompAttToTwoBalls2}). We then glue these modified decompositions together by identifying the $h_1^{D}$ and the $h_2^{D}$. In the end, we adopt the decomposition so that it is $R$-component-feasible and obtain the desired partial decomposition of $G[C,1]$ as its restriction to $G[C,1]$.

We first show that there exists a vertex $w'$ as described above.

\begin{lemma}\label{lem:ThreePathsFarApart}
    Let $\ell, d, r \in \N_{\geq 1}$ with $\ell \geq 6d$, and let $B$ be a ball in a graph $G$ around a vertex $w$ of radius~$r$. Assume that $C$ is a component of $G-B$ whose boundary $\partial_G C$ contains three vertices that are pairwise at least $4\cdot(2\ell + d + 2)$ apart. Then there exist a vertex $w' \in V(C)$ and three $B_G(w,r-\ell)$--$B_G(w', 2d)$ paths that are pairwise at least $d$ apart.
\end{lemma}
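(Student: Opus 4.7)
The plan is to construct a ``tripod'' inside $C$ with centre $w' \in V(C)$ and three legs $L^1, L^2, L^3$ to $u_1, u_2, u_3$, and then extend each leg outwards to the far ball $B_G(w, r-\ell)$ via a short ``perpendicular'' path.

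Since $C$ is connected, I will fix a shortest $u_1$--$u_2$ path $P$ in $C$, pick $w' \in V(P) \subseteq V(C)$ minimising $\dist_C(w', u_3)$, and fix a shortest $u_3$--$w'$ path $Q$ in $C$. Then the legs $L^1 := u_1Pw'$, $L^2 := w'Pu_2$, and $L^3 := Q$ meet pairwise only at $w' \in V(C)$, and each is a shortest path in $C$. For each $i \in [3]$, fix also a shortest $u_i$--$B_G(w, r-\ell)$ path $R^i = u_i \dots z_i$ in $G$; as $u_i \in \partial_G C$ has a neighbour in $B$, we get $|R^i| \leq \ell+1$ and $R^i \subseteq B_G(u_i, \ell+1)$. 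The concatenation of $L^i$ and $R^i$ avoids $B_G(w, r-\ell)$ except at $z_i$ (as $L^i \subseteq C$), so extracting a path from it and trimming its final segment in $B_G(w', 2d)$ yields a $B_G(w, r-\ell)$--$B_G(w', 2d)$ path $P_i^*$.

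The main work is to show that the $P_i^*$ are pairwise at least $d$ apart in $G$, and splits naturally by case analysis on which of the two constituent subpaths ($L$ or $R$) contains each compared vertex. The $R$-$R$ case is immediate: from $R^k \subseteq B_G(u_k, \ell+1)$ and $\dist_G(u_i, u_j) \geq 4(2\ell+d+2) = 8\ell+4d+8$, one gets $\dist_G(R^i, R^j) \geq 6\ell + 4d + 6 \geq d$. For the $L$-$L$ case, the classical shortest-path tripod estimates in $C$ give $\dist_C(p, q) \geq 4d$ for $p \in L^1, q \in L^2$ outside $B_G(w', 2d)$ (since both lie on the $C$-geodesic $P$), and $\dist_C(p, q) \geq 2d$ for $p \in L^1, q \in L^3$ outside $B_G(w', 2d)$ (using that $Q$ is the shortest $u_3$--$P$ path in $C$, so $\dist_C(q, P) = \dist_C(q, w') \geq 2d$). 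The mixed $L$-$R$ cases follow by combining these.

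The most delicate step will be converting the $C$-distance estimates above into the required $G$-distance estimates, since a short $G$-path between two $C$-vertices can take a shortcut through $B$. For this, one argues that any hypothetical $p$--$q$ path $S$ in $G$ of length $< d$ between points on distinct legs must cross $\partial_G C$ in two vertices $x, x'$ with $\dist_C(p, x) + \dist_C(x', q) < d - 2$; one then leverages the assumption $\dist_G(u_i, u_j) \geq 4(2\ell+d+2)$ together with the hypothesis $\ell \geq 6d$ against the shortest-path property of the building blocks of the tripod (or of $R^i, R^j$) to derive a contradiction. The constants $4(2\ell+d+2)$ and $\ell \geq 6d$ are precisely what is required to make this quantitative bookkeeping go through, and this is where the bulk of the technical effort will lie.
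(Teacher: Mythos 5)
There is a genuine gap, and it sits exactly at the step you defer: the passage from $C$-distance separation of your tripod legs to $G$-distance separation is not just delicate, it fails for your construction. The hypothesis only controls the three designated vertices $u_1,u_2,u_3$; the boundary $\partial_G C$ may contain many other vertices, and a $C$-geodesic between $u_1$ and $u_2$ may touch $\partial_G C$ repeatedly, so two points far apart along it (and far from $w'$) can be joined by a path of length $2$ through $B$. Concretely, take $d\geq 3$, $\ell \geq 6d$, $r \geq 4\ell+2d+3$, and let $G$ consist of four rays of length $r$ from $w$ ending in $b_1,b_2,b_3,z$, vertices $u_i$ adjacent to $b_i$, a long path $P=u_1p_1\dots p_{2n-1}u_2$ with $n\gg r$, a path from $u_3$ to $p_n$, and the two extra edges $p_{n-3d}z$ and $p_{n+3d}z$. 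Then $B=B_G(w,r)$ is the union of the rays, $C$ is everything else, $u_1,u_2,u_3$ are pairwise at distance $2r+2\geq 4(2\ell+d+2)$, and $P$ is the unique $u_1$--$u_2$ geodesic in $C$ with your $w'=p_n$. The vertices $p_{n-3d}\in L^1$ and $p_{n+3d}\in L^2$ lie outside $B_G(w',2d)$, so they survive your trimming and lie on $P_1^*$ and $P_2^*$, yet $\dist_G(p_{n-3d},p_{n+3d})=2<d$. So the paths your recipe produces are in general not $d$ apart (the lemma still holds for this graph, but via different paths), and your suggested repair cannot work either: the crossing vertices $x,x'\in\partial_G C$ of a short bridge through $B$ need not be near any $u_i$, so the far-apartness of $u_1,u_2,u_3$ yields no contradiction. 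The same phenomenon also defeats the mixed $L$--$R$ comparisons.

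This is precisely why the paper does not use a tripod of $C$-geodesics between the $u_i$. Its proof first restricts to a $U_1$--$(U_2\cup U_3)$ path, where $U_i$ collects the vertices all of whose shortest paths to $B$ exit $\partial_G C$ near $u_i$, then places the centre $w'$ on a shortest path $W$ from a vertex of that path towards $B_G(w,r-\ell)$, and assembles the three paths from segments of $W$, from fresh shortest ``descending'' paths $Q_2,Q_3$ starting at boundary vertices that are provably $\geq 2\ell+d+2$ apart, and from segments cut off at the first/last intersection with $B_G(W,2d)$. All separation estimates are then genuinely $G$-distance estimates between shortest paths towards the deep ball (or between segments kept at controlled $G$-distance from them), which is exactly what excludes shortcuts through $B$; arbitrary geodesics of $C$ admit no such control. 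To salvage your approach you would have to replace $L^1,L^2,L^3$ by paths with comparable descending/controlled structure, which essentially recreates the paper's argument.
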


\begin{figure}
    \centering
    \pdfOrNot{\def\svgwidth{0.4\columnwidth} 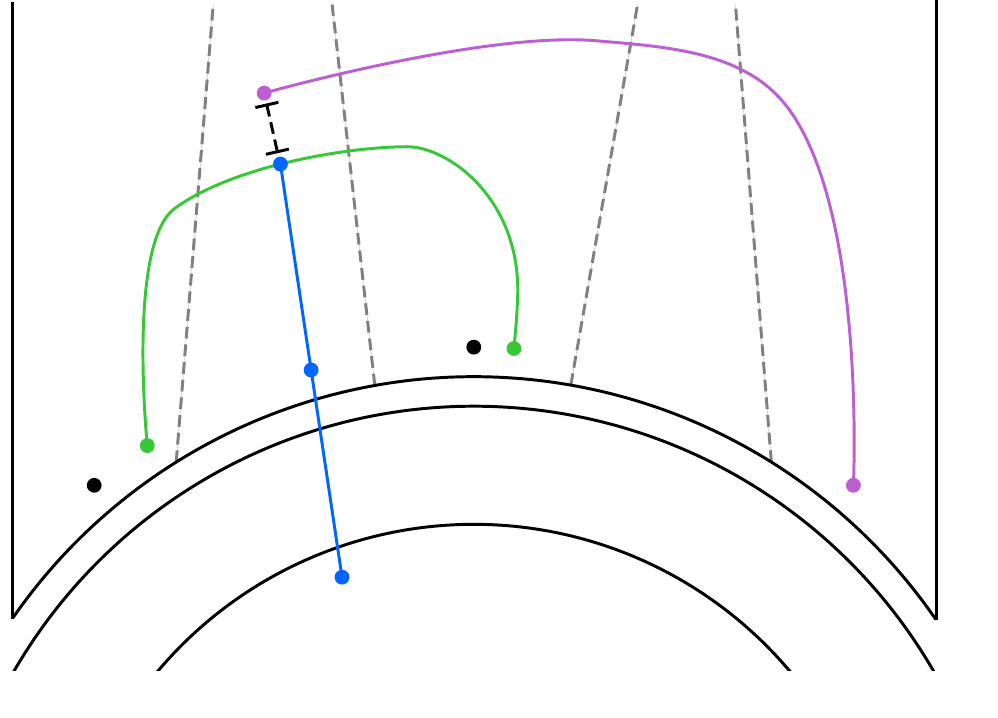}
    {\includesvg[width=0.4\columnwidth]{svg/setupthreenbs.svg}}
    \caption{Setup in the beginning of the proof of \cref{lem:ThreePathsFarApart}}
    \label{fig:ThreePathsFarApart}
\end{figure}

\begin{proof}
    Let $u_1, u_2, u_3 \in \partial_G C$ be three vertices that are pairwise at least $4\cdot(2\ell + d + 2)$ apart. Let $U_i$, for $i \in [3]$, be the set of vertices $v \in V(C)$ such that every shortest $v$--$B$ path meets $\partial_G C$ in a vertex of distance less than $2\ell + d + 2$ to $u_i$. Note that the sets $U_i$ are pairwise disjoint, as a shortest $v$--$B$ path meets $\partial_G C$ in precisely one vertex and this vertex has distance less than $2\ell + d + 2$ to at most one $u_i$, since the pairwise distance of the $u_i$ is at least $2\cdot(2\ell + d + 2)$.
    
    Let $P' = p'_0 \dots p'_n$ be a $U_1$--$(U_2 \cup U_3)$ path in $C$; by symmetry, we may assume that $P'$ ends in $U_2$. We extend $P'$ to a $(\partial_G C \cap U_1)$--$(\partial_G C \cap U_2)$ path $P = p_0 \dots p_m$ by adding a shortest $(\partial_G C \cap U_1)$--$p'_0$ path and a shortest $p'_n$--$(\partial_G C \cap U_2)$ path. Note that these paths are contained in $U_1$ and $U_2$, respectively, and hence are disjoint. Thus $P$ is indeed a path. 
    Further, let $Q$ be a $u_3$--$B_G(P, 2d)$ path in $C$ and let $p$ be some vertex of $P$ such that $Q$ ends in $B_G(p, 2d)$. Then $p \not\in U_3$ by the assumption on $P$. Let $W = w_0 \dots w_k$ be a shortest $p$--$B_G(w, r-\ell)$ path in $G$, and let $p'$ be the unique vertex in $W \cap \partial_G C$ (see \cref{fig:ThreePathsFarApart}). 
    Since $p \notin U_3$, we may choose $W$ so that $\dist_G(p', u_3) \geq 2\ell + d+2$. Moreover, as $\dist_G(u_1, u_2) \geq 4\cdot(2\ell + d+2)$, we cannot have $\dist_G(p', u_1), \dist_G(p', u_2) < 2\cdot(2\ell + d+2)$; by symmetry, we may thus assume $\dist_G(p', u_2) \geq 2\cdot(2\ell + d+2)$. In particular, this implies that 
    \begin{equation} \label{eq:ThreePathsFarApart:1}
    \dist_G(p_m, p') \geq \dist_G(p',u_2) - \dist_G(u_2, p_m) \geq 2\cdot(2\ell + d +2) - (2\ell + d+2) = 2\ell + d+2.
    \end{equation}
    
    Let $x_3$ be the first vertex on $Q$ such that $x_3 \in B_G(W, 2d)$, and let $x_2$ be the last vertex on $P$ such that $x_2 \in B_G(W, 2d)$. Further, let $i_j \in \{0, \dots, k\}$, for $j \in \{2,3\}$ be an index such that $x_j \in B_G(w_{i_j}, 2d)$. As the following reasoning will be symmetric in $P$ and $Q$, we may assume without loss of generality that $i_3 \leq i_2$. We claim that $w' := w_{i_2}$ is as desired (see \cref{fig:ThreePaths:2a}). 

    \begin{figure}
        \centering
        \begin{subfigure}[b]{0.45\linewidth}
                \pdfOrNot{\def\svgwidth{\columnwidth} 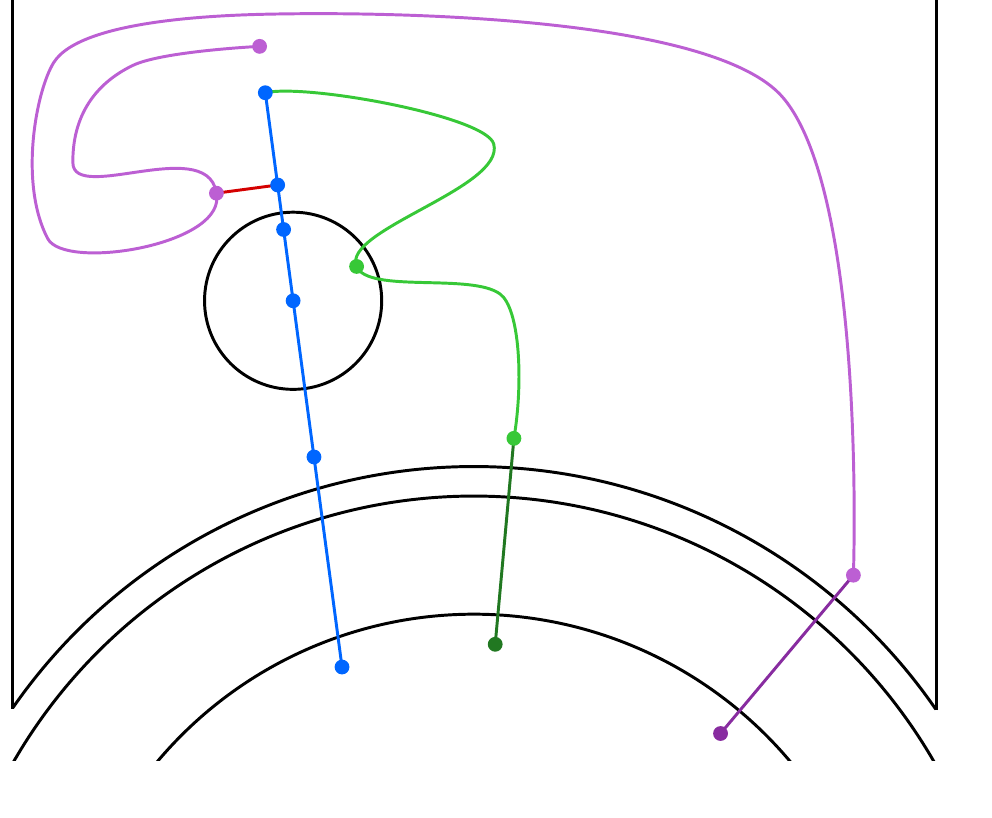}{\includesvg[width=\columnwidth]{svg/threenbs.svg}}
            \caption{}
            \label{fig:ThreePaths:2a}
        \end{subfigure}
        \hspace{10mm}
        \begin{subfigure}[b]{0.45\linewidth}
                \pdfOrNot{\def\svgwidth{\columnwidth} 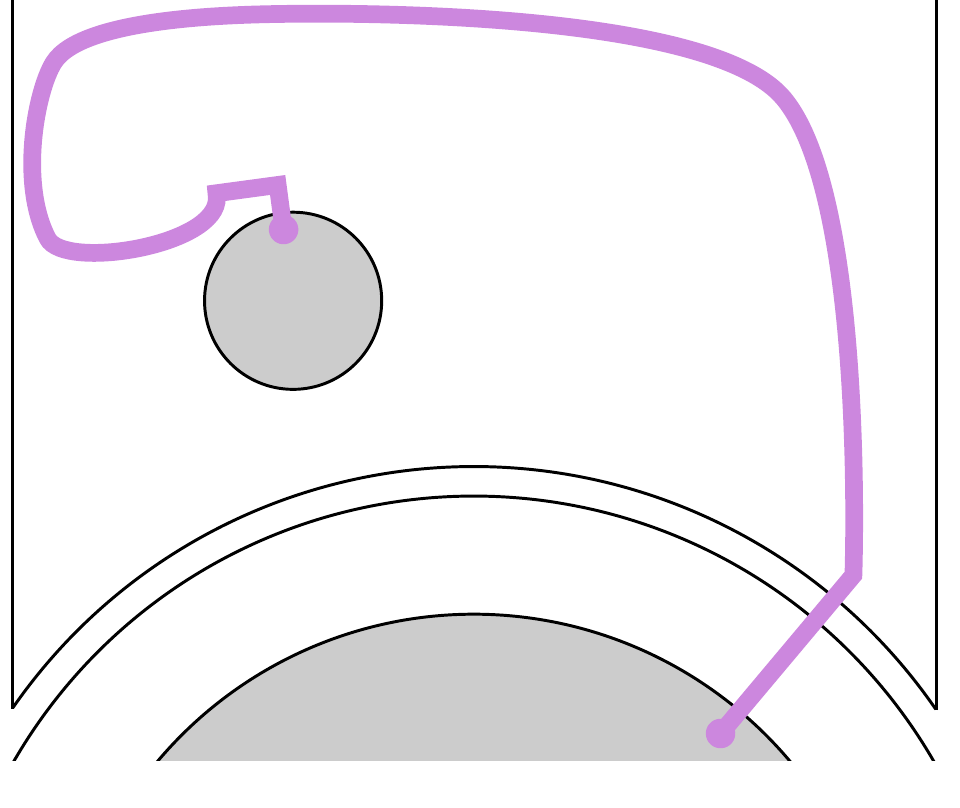}{\includesvg[width=\columnwidth]{svg/threenbs2.svg}}
            \caption{}
            \label{fig:ThreePaths:2b}
        \end{subfigure}
        \caption{Choice of $w'$ and construction of the paths $P_1, P_2, P_3$ in the proof of \cref{lem:ThreePathsFarApart}.}
        \label{fig:ThreePaths:2}
    \end{figure}

    For this, let $Q_2$ be a shortest $p_m$--$B_G(w, r-\ell)$ path and $Q_3$ a shortest $u_3$--$B_G(w, r-\ell)$ path. Further, let $S_3$ be a shortest $x_3$--$w_{i_3}$ path and let $s_3$ be the first vertex on the path $S'_3 := x_3S_3w_{i_3}Ww_{i_2}$ that is contained in $B_G(w_{i_2}, 2d)$ (see \cref{fig:ThreePaths:2a}). We now define our final three $B_G(w,r-\ell)$--$B_G(w', 2d)$ paths $P_1,P_2,P_3$ (see \cref{fig:ThreePaths:2b}): 
    \[
    P_1 := w_{i_2 + 2d}Ww_k,\; P_2 := x_2Pp_mQ_2 \text{ and } P_3 := s_3\bar{S}'_3x_3\bar{Q}u_3Q_3.
    \]
     We claim that the $P_i$ are as desired, i.e.\ $\dist_G(P_i, P_j) \geq d$ for every two distinct $i,j \in [3]$. First, we show $\dist_G(P_1, P_2) \geq d$: By the choice of $x_2$, we have $\dist_G(W, x_2Pp_m) \geq 2d \geq d$. Further,  since $Q_2$ starts in $p_m$ and both shortest $\partial_G C$--$B_G(w,r-\ell)$ paths $Q_2$ and $p'Ww_k$ have length $\ell + 1$, we have
    \begin{equation} \label{eq:ThreePathsFarApart:2}
    \dist_G(p'Ww_k, Q_2) \geq \dist_G(p_m, p') - ||p'Ww_k|| - ||Q_2|| \geq (2\ell + d + 2) - 2(\ell + 1) = d.
    \end{equation}
    Since $\dist_G(p_m,p') \geq 2\ell+d$ by \eqref{eq:ThreePathsFarApart:1}, $\ell \geq d$, $Q_2$ is a shortest $\partial_G C$--$B_G(w, r-\ell)$ path and $p' \in \partial_G C$, we have $\dist_G(Q_2, B_G(p',d) \cap V(W)) \geq d$.
    As $W$ is a shortest $p$--$B_G(w,r-\ell)$ path, the remainder of $W$ in $C$ has distance $\geq d$ to $B$ and hence to $Q_2$.
    All in all, $\dist_G(W, Q_2) \geq d$, and hence $\dist_G(P_1, P_2) \geq d$.

    Next, we show $\dist_G(P_1, P_3)$:
    Analogously to $\dist_G(W, Q_2) \geq d$, we also find $\dist_G(W, Q_3) \geq d$. 
    Moreover, $\dist_G(W, x_3\bar{Q}u_3) \geq 2d \geq d$ by the choice of $x_3$.
    Thus, to see that $\dist_G(P_1, P_3) \geq d$, it suffices to verify $\dist_G(w_{i_2+2d}Ww_k, S'_3) \geq d$: Since $W$ is a shortest path in $G$ and $i_3 \leq i_2$, we have $\dist_G(w_{i_2+2d}Ww_k, w_{i_3}Ww_{i_2}) = \dist_G(w_{i_2+2d}, w_{i_2}) = 2d$. Moreover, by the choice of $x_3 \in V(Q)$ and $S_3$, the $x_3$--$w_{i_3}$ path $S_3$ has length $||S_3|| = \dist_G(W,x_3) = 2d$. Thus, the distance from the first half of $S_3$ to $w_{i_2+2d}Ww_k$ is $\geq \dist_G(x_3,W) - ||S_3||/2 = 2d - d = d$, and the distance from the second half of $S_3$ to $w_{i_2+2d}Ww_k$ is $\geq \dist_G(w_{i_3},w_{i_2+2d}Ww_k) - ||S_3||/2 \geq 2d - d = d$.

    Finally, we show $\dist_G(P_2, P_3) \geq d$: 
    By the choice of $Q$, we have $\dist_G(P, Q) \geq 2d$.
    Similarly to the previous argument, we find $\dist_G(x_2Pp_m, S'_3) \geq d$, as $\dist_G(W, x_2Pp_m), \dist_G(P,Q) \geq 2d$, and thus the distance from the first half of $S_3$ to $x_2Pp_m$ is $\geq \dist_G(x_3,P) - ||S_3||/2 \geq 2d - d = d$, and the distance from the second half of $S_3$ to $x_2Pp_m$ is $\geq \dist_G(w_{i_3}, x_2Pp_m) - ||S_3||/2 \geq 2d - d = d$.
    Moreover, analogously to \eqref{eq:ThreePathsFarApart:2}, we find 
    \[
    \dist_G(Q_2, Q_3) \geq \dist_G(p_m, u_3) - ||Q_2|| - ||Q_3|| \geq \dist_G(U_2 \cap \partial_G C, u_3) - 2(\ell+1) \geq (2\ell+d+2) - 2(\ell+1) = d. 
     \]
    Further, $\dist_G(x_2Pp_m, Q_3), \dist_G(u_3Qx_3, Q_2) \geq d$ since $\dist_G(P, u_3), \dist_G(Q, p_m) \geq \dist_G(P,Q) \geq 2d$ and $P, Q \subseteq C \subseteq G-B$, while $Q_2, Q_3$ are shortest $\partial_G C$--$B_G(w, r-\ell)$ paths starting in $p_m, u_3$, respectively.
    It thus remains to show $\dist_G(S_3, Q_2) \geq d$: 
    For this, it suffices to prove $\dist_G(x_3,Q_2) \geq 3d$, as $S_3$ has length $2d$ and starts in $x_3$.
    If $\dist_G(x_3, B) > 3d$, then $\dist_G(x_3,Q_2) \geq 3d$ follows immediately, as $Q_2 \subseteq G[B,1]$.
    Thus, we may assume $\dist_G(x_3, B) \leq 3d$.
    Suppose for a contradiction that $\dist_G(x_3, Q_2) < 3d$.
    Then $\dist_G(x_3,p_m) \leq 2 \dist_G(x_3, Q_2) < 6d$, since $x_3 \in V(C)$ and $Q_2$ is a shortest $p_m$--$B_G(w, r- \ell)$ path.
    Also $\dist_G(w_{i_3}, p') \leq \dist_G(w_{i_3}, x_3) + \dist_G(x_3, B) \leq 5d$, since $W$ is a shortest path.
    Thus, $\dist_G(p_m, p') \leq \dist_G(p_m, x_3) + \dist_G(x_3, w_{i_3}) + \dist_G(w_{i_3}, p') < 13d$, which contradicts $\dist_G(p_m,p') \geq 2 \ell + d$ as $\ell \geq 6d$.  
\end{proof}

\begin{corollary}\label{lem:BallAndThreeComps}
    Let $K, \ell, r \in \N_{\geq 1}$ with $\ell \geq 66K$, and let $B$ be a ball in a graph $G$ around a vertex $w$ of radius~$r$. Suppose there is a component $C$ of $G-B$ such that $\partial_G C$ contains three vertices that are pairwise at least $4\cdot(2\ell+11K+2)$ far apart. 
    
    If $G$ has no $K$-fat $K_4$ minor, then there exists a vertex $w' \in V(C)$ such that for every component $D$ of $G - (B' \cup A)$ that attaches to $B' := B_G(w, r-\ell)$ and $A := B_G(w', 22K)$ there exists a $B'$--$A$ path $P$ with $\dist_G(P, G[D,1]) \geq 5K$.
\end{corollary}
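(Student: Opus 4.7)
My plan is to apply \cref{lem:ThreePathsFarApart} and \cref{lem:FindingAFatK4} in succession, and then, for each component $D$ attaching to both balls, argue by contradiction that one of the three resulting paths has the required distance property.

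First I would invoke \cref{lem:ThreePathsFarApart} with $d := 11K$. The hypothesis $\ell \geq 66K = 6d$ is built in, and the required pairwise separation $4(2\ell + 11K + 2)$ inside $\partial_G C$ is exactly $4(2\ell + d + 2)$. This produces a vertex $w' \in V(C)$ together with three $B'$--$A$ paths $P_1, P_2, P_3$ pairwise at least $11K$ apart, where $B' := B_G(w, r-\ell)$ and $A := B_G(w', 22K)$.

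Next I would apply \cref{lem:FindingAFatK4} with $v_1 := w$, $v_2 := w'$, $r_1 := r-\ell$ and $r_2 := 22K$, so that $B_1 = B'$ and $B_2 = A$. Since $w' \in V(C)$ lies outside $B$, we have $\dist_G(w, w') \geq r+1$, which exceeds $r_1 + r_2 + 5K = r + 27K - \ell$ because $\ell \geq 66K > 26K$; and the three paths are $11K \geq 5K$ apart. As $G$ has no $K$-fat $K_4$ minor, \cref{lem:FindingAFatK4} yields that the interiors $\mathring P_1, \mathring P_2, \mathring P_3$ lie in three distinct components $D_1, D_2, D_3$ of $G - (A \cup B')$.

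Fix now any component $D$ of $G - (A \cup B')$ attaching to both $A$ and $B'$. Since $D_1, D_2, D_3$ are distinct, at most one of them equals $D$, so at least two indices $j, k \in [3]$ satisfy $\mathring P_j, \mathring P_k \not\subseteq D$. I claim that at least one of $P_j, P_k$ is at $G$-distance $\geq 5K$ from $G[D,1]$, and this path serves as the desired $P$.

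The main obstacle is this final claim, which I expect to prove by contradiction. If both $\dist_G(P_j, G[D,1]) < 5K$ and $\dist_G(P_k, G[D,1]) < 5K$, then taking $<5K$-bridges from $P_j$ and $P_k$ into $D$, concatenating them through the connected component $D$, and using $A$, $B'$ together with suitably trimmed middle portions of $P_j, P_k$ (shaved by a margin of $\geq K$ from $A \cup B'$) as the four branch sets, one can assemble a $K$-fat $K_4$-model in $G$: the third path $P_l$ (or, if $\mathring P_l \subseteq D$, an $A$--$B'$ path obtained by short-cutting through $D$ on a distant piece of $D$) serves as the $A$--$B'$ branch path, the four trimmed end pieces of $P_j, P_k$ give the branch paths from $A, B'$ to the interior branch sets, and the bridge-through-$D$-bridge construction yields the branch path between the two interior branch sets. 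The pairwise $11K$-separation of the $P_i$, combined with the $\leq 5K$ bridge lengths and the $\geq K$ trimming margin, makes all non-incident pairs at $G$-distance $\geq K$, contradicting that $G$ has no $K$-fat $K_4$ minor.
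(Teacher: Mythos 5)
Your first step — applying \cref{lem:ThreePathsFarApart} with $d := 11K$ to get $w'$ and paths $P_1, P_2, P_3$ pairwise at least $11K$ apart — is exactly what the paper does. After that, however, you diverge, and the divergence introduces a real gap.

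Your intermediate application of \cref{lem:FindingAFatK4} to the \emph{original} balls $B'$ and $A$ is correct as a deduction (the interiors $\mathring P_i$ do lie in distinct components), but it is a detour: knowing that at most one $\mathring P_i$ lies in $D$ does not, by itself, let you finish. The crux is your final contradiction argument, and as sketched it does not go through. Since $\mathring P_j$ and $D$ lie in \emph{different} components of $G - (B' \cup A)$, any $P_j$--$G[D,1]$ bridge of length $< 5K$ necessarily either ends on, or passes through, the sphere of $B'$ or $A$; the same holds for $P_k$. Consequently the would-be branch path $E_{12}$ (bridge -- through $D$ -- bridge) runs through $B' \cup A$, and one must trim those balls far enough that $E_{12}$ clears them by $\geq K$. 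A margin of $\geq K$, as you propose, is insufficient: the bridges can dip up to roughly $5K$ deep into the annulus, so one needs to shave on the order of $6K$, after which all the remaining distance constraints (in particular those involving the end segments of $P_j, P_k$ and the third path) have to be re-derived — and you have not done so. The case $\mathring P_l \subseteq D$ is a second, independent obstruction: then $P_l$ is itself inside $D$, which you also want to use for the $E_{12}$ bridge, and the proposed fix (``short-cut through $D$ on a distant piece of $D$'') is not a well-defined construction; nothing guarantees a piece of $D$ that is both far from $P_l$ and reachable from both bridges.

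The paper's argument sidesteps all of this with a single clean trick that you did not find: assume towards a contradiction that $\dist_G(P_i, D) \leq 5K$ for \emph{all} $i \in [3]$; shrink the two balls by $3K$ to $B_G(w, r-\ell-3K)$ and $B_G(w', 19K)$; extend each $P_i$ inside $G[B'] \cup G[A]$ to a path $P_i'$ between the shrunken balls. The shrinking absorbs the $\leq 5K$ proximity to $D$, so all three $\mathring P_i'$ now lie in the \emph{same} component of $G$ minus the shrunken balls, while they remain pairwise $\geq 11K - 2\cdot 3K = 5K$ apart. Then \cref{lem:FindingAFatK4}, applied once to the shrunken configuration, gives the contradiction outright, with no ad-hoc $K_4$-model construction, no bridge bookkeeping, and no special case for $\mathring P_l \subseteq D$.
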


\begin{proof}
    We note that in particular $r \geq \ell$ as $\partial_G C$ contains two vertices which are $\geq 4\cdot(2\ell+11K+2)$ apart.
    By \cref{lem:ThreePathsFarApart} (with $d := 11K$) there exists a vertex $w' \in V(C)$ and three $B'$--$B_G(w', 22K)$ paths $P_1, P_2, P_3$ such that $\dist_G(P_i, P_j) \geq 11K$ for all $i \neq j \in [3]$.
    Set $A := B_G(w', 22K)$, and let $D$ be a component of $G - (B' \cup A)$ that attaches to $B'$ and $A$.
    
    We show that there is some $i \in [3]$ such that $\dist_G(P_i, G[D,1]) \geq 5K$. 
    Towards a contradiction, suppose that $\dist_G(P_i, D) \leq 5K$ for all $i \in [3]$.
    It then follows that all three paths $P_i$ lie in the unique component $D'$ of $G - (B_G(w, r - \ell - 3K) \cup B_G(w', 19K))$ that contains $D$. For each $i \in [3]$, let $P'_i$ be a shortest $B_G(w, r-\ell-3K)$--$B_G(w', 19K)$ path in $P_i \cup G[B'] \cup G[A]$. Then $\dist_G(P'_i, P'_j) \geq 11K - 2 \cdot 3K = 5K$, which by \cref{lem:FindingAFatK4} ($\dist_G(w,w') > r_1 + r_2 + 5K +1$ with $r_1 = r-\ell-3K, r_2 = 19K$, since $\dist_G(w',w) \geq r$ and $\ell \geq 23K$) implies that $G$ has a $K$-fat $K_4$ minor, which contradicts our assumption.
\end{proof}

\begin{lemma}\label{lem:CompAttToTwoBalls2}
    Let $K, r_1, r_2 \in \N_{\geq 1}$ with $r_1 \leq R_0(K)$ and $r_2 > \ell(K)$, and let $v_1, v_2$ be two vertices of a graph $G$ which are at least $r_1 + r_2 + 2$ apart. Set $B_i := B_G(v_i, r_i)$ for $i \in [2]$ and $B'_2 := B_G(v_2, r_2-\ell(K)-1)$. Let $C$ be the component of $G - B_2$ that contains $v_1$. Let $D$ be a component of $G - (B_1 \cup B'_2)$ which attaches to $B_1$ and $B'_2$, and suppose there is a $B_1$--$B'_2$ path $P$ in $G$ such that $\dist_G(P, G[D,1]) \geq 5K$.
  
    If $G$ has no $K$-fat $K_4$ minor, then there exists an induced subgraph $Y$ of $G[B_1 \cup V(D) \cup B_2]$ which admits an honest decomposition $(H, \cV)$ modelled on some graph $H \in \cH_{SP}$ with terminals $h_1, h_2$ such that
    \begin{enumerate}[label=\rm{(\greek*)}]
        \item \label{itm:CompAttToTwoBalls2:FirstBag} $V_{h_1} = B_1$ and $V_{h_2} \supseteq B_2$,
        \item \label{itm:CompAttToTwoBalls2:Rad1} $\rad_G(V_h) \leq R_0'(K)$ for all nodes $h \neq h_2$ of $H$,
        \item \label{itm:CompAttToTwoBalls2:RadiusWidth} $\rad_G(V_{h_2}) \leq r_2+2R'_0(K)+1$,
        \item \label{itm:CompAttToTwoBalls2:RadiusSpread} $\irs((H, \cV)) \leq 3$, and
       \item \label{itm:CompAttToTwoBalls2:CompsNhood} for every component $D'$ of $G - Y$ that meets $C$ and $D$, we have $\rad_G(N_G(D')) \leq R_2(K) - 1$ and there is a node $h$ of $H$ such that $N_G(D') \subseteq V_h$.
    \end{enumerate}
\end{lemma}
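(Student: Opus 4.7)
My plan is to apply \cref{lem:ComponentAttachingToTwoBalls} to the balls $B_1$ and $B_2' = B_G(v_2, r_2 - \ell(K) - 1)$ together with the component $D$ and the path $P$, and then modify the resulting partial graph-decomposition by contracting a suitable subgraph of the decomposition graph near $h_2^0$ and enlarging the bags at the two terminals. The hypotheses of \cref{lem:ComponentAttachingToTwoBalls} hold, since from $\dist_G(v_1, v_2) \geq r_1 + r_2 + 2$ together with $\ell(K) = 2R_0(K) + 5K + 2 \geq 5K + 3$ we obtain $\dist_G(v_1, v_2) \geq r_1 + r_2' + 5K + 2$, and $\dist_G(P, G[D, 1]) \geq 5K$ is given. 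Since $G$ has no $K$-fat $K_4$ minor, this yields a partial \gd\ $\cH^0 = (H^0, \cV^0)$ of $G$ with support $Y^0 \subseteq G[D, 1]$ modelled on some $H^0 \in \cH_{SP}$ with terminals $h_1^0, h_2^0$ that satisfies \cref{itm:CompAttToTwoBalls:FirstBag} to \cref{itm:CompAttToTwoBalls:CompsNhood}.

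Next, let $H^{h_2} \subseteq V(H^0)$ be the set of all nodes of $H^0$ whose bag meets $B_G(B_2, 1)$; note that $h_2^0 \in H^{h_2}$ but $h_1^0 \notin H^{h_2}$, since $V_{h_1^0}^0 = N_G(D) \cap B_1$ is disjoint from $B_G(B_2, 1)$ by $\dist_G(v_1, v_2) > r_1 + r_2 + 1$. A key technical point is that $H^{h_2}$ induces a connected subgraph of $H^0$, which I plan to deduce from property \cref{GraphDecomp:H2'} of graph-decompositions applied to a suitable connected subgraph of $Y^0$ whose bag-support is $H^{h_2}$. Given this, let $H$ be obtained from $H^0$ by contracting $H^{h_2}$ to a single new vertex $h_2$ and let $h_1 := h_1^0$; then $H$ is a minor of $H^0$ obtained by contracting a connected subgraph not containing $h_1^0$, so $H \in \cH_{SP}$ with terminals $h_1, h_2$. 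Finally, set $V_{h_1} := B_1$, $V_{h_2} := B_2 \cup \bigcup_{h \in H^{h_2}} V_h^0$, and $V_h := V_h^0$ for all other $h \in V(H)$; then $(H, \cV)$ is an honest \gd\ of $Y := G[\bigcup_h V_h] \subseteq G[B_1 \cup V(D) \cup B_2]$.

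Properties \cref{itm:CompAttToTwoBalls2:FirstBag} to \cref{itm:CompAttToTwoBalls2:RadiusSpread} then follow from the corresponding properties of $\cH^0$ together with $r_1 + 2R_0(K) + 5K + 1 \leq R_0'(K)$; in particular, for \cref{itm:CompAttToTwoBalls2:RadiusWidth}, each $V_h^0$ with $h \in H^{h_2}$ has radius at most $R_0'(K)$ and meets $B_G(B_2, 1)$, so is contained in $B_G(v_2, r_2 + 1 + 2R_0'(K))$. The main obstacle is the verification of \cref{itm:CompAttToTwoBalls2:CompsNhood}. Given a component $D'$ of $G - Y$ meeting $C$ and $D$, I first observe $D' \subseteq V(D)$ (using that $N_G(D) \subseteq B_1 \cup B_2' \subseteq V(Y)$ separates $V(D)$ from the rest of $G$), so $D'$ is contained in some component $D^*$ of $G - Y^0$ in $D$. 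By \cref{itm:CompAttToTwoBalls:CompsNhood} of \cref{lem:ComponentAttachingToTwoBalls}, $N_G(D^*) \subseteq V_{h_*}^0$ for some $h_* \in V(H^0)$, and since $V(Y) \setminus V(Y^0) \subseteq B_1 \cup B_2$ and $D' \cap B_2 = \emptyset$, we have $N_G(D') \subseteq N_G(D^*) \cup (V(D^*) \cap B_2)$. A case distinction on whether $V(D^*)$ meets $B_2$ then yields $N_G(D') \subseteq V_h$ for a single $h$: in the empty case it lies in the bag corresponding to $h_*$ in $(H, \cV)$; in the non-empty case, $V_{h_*}^0 \cap B_G(B_2, 1) \neq \emptyset$ forces $h_* \in H^{h_2}$, so $N_G(D') \subseteq V_{h_*}^0 \cup (V(D^*) \cap B_2) \subseteq V_{h_2}$. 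I expect the most delicate step to be the bound $\rad_G(N_G(D')) \leq R_2(K) - 1$ when $N_G(D')$ lands in $V_{h_2}$, where one must exploit that $N_G(D')$ localizes around $D^*$ rather than occupying all of $V_{h_2}$.
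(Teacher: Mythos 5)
Your overall route is the same as the paper's: apply \cref{lem:ComponentAttachingToTwoBalls} to $v_1,v_2$ with radii $r_1$ and $r_2-\ell(K)-1$, contract the set $H^{h_2}$ of nodes whose bags meet $B_G(B_2,1)$ to a single terminal $h_2$, and enlarge the terminal bags by $B_1$ and $B_2$. But two steps are genuinely missing. The first is the connectivity of $H^{h_2}$. Your plan to get it from \cref{GraphDecomp:H2'} applied to ``a connected subgraph of $Y^0$ whose bag-support is $H^{h_2}$'' does not go through: the only subgraph whose bag-support is exactly $H^{h_2}$ is essentially $Y^0[V(Y^0)\cap B_G(B_2,1)]$, and this need not be connected -- two of its pieces may only be joinable through components of $D-Y^0$, i.e.\ outside the support, and any detour inside $Y^0$ may pass through bags not meeting $B_G(B_2,1)$, destroying the bag-support condition. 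The paper instead connects each $h\in H^{h_2}$ to $h_2^0$ by routing a path in $G[B_G(B_2,1)\cap B_G(D,1)]$ towards $N_G(D)\cap B_2'$ and bridging the stretches where this path leaves $Y^0$ via \cref{itm:CompAttToTwoBalls:CompsNhood}: every traversed component of $D-Y^0$ has its entire neighbourhood in a single bag, and that bag meets $B_G(B_2,1)$, so consecutive pieces have intersecting bag-supports inside $H^{h_2}$. Connectivity is not optional here: contracting a disconnected vertex set is not a minor operation, so without it $H\in\cH_{SP}$ (hence $H+h_1h_2\in\Forbminor(K_4)$) is not established, even though \cref{GraphDecomp:H2} for $(H,\cV)$ would survive.

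The second, and more serious, gap is exactly the step you leave open: the bound $\rad_G(N_G(D'))\leq R_2(K)-1$ in \cref{itm:CompAttToTwoBalls2:CompsNhood}. This is the heart of the lemma (it is what later makes the decomposition in \cref{lem:CompWithThreeVertices} component-feasible), and ``$N_G(D')$ localizes around $D^*$'' is not enough: if the node $h_*$ with $N_G(D^*)\subseteq V^0_{h_*}$ is $h_2^0$ or one of its neighbours, the only available bound on $\rad_G(V^0_{h_*})$ comes from \cref{itm:CompAttToTwoBalls:Rad2} and is of order $r_2$, which is not bounded in terms of $K$. The paper's key idea, absent from your sketch, is to use the hypothesis that $D'$ meets $C$: a $D^*$--$B_1$ path inside $C$ yields a vertex $q\in N_G(D^*)$ with $\dist_G(q,v_2)>r_2$, and \cref{itm:CompAttToTwoBalls:Rad2} then forces $h_*\notin B_{H^0}(h_2^0,1)$, so $\rad_G(V^0_{h_*})\leq R_0'(K)$ by \cref{itm:CompAttToTwoBalls:Rad1,itm:CompAttToTwoBalls:Rad2}. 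The second ingredient is that, since $v_2\notin D^*$ and $N_G(D^*)\subseteq V^0_{h_*}$, every $D'$--$v_2$ path meets $V^0_{h_*}$; for $u\in N_G(D')\cap B_2$ a shortest $u$--$v_2$ path enters $B_2'$ within roughly $\ell(K)$ steps while $D^*\subseteq D$ avoids $B_2'$, so the path leaves $D^*$, and hence hits $V^0_{h_*}$, within distance about $\ell(K)$ of $u$. Together this bounds the diameter of $N_G(D')$ by roughly $2\ell(K)+2R_0'(K)\leq R_2(K)-1$. Without these two ideas your case analysis establishes only the single-bag containment, not the radius bound, so the proposal as it stands does not prove \cref{itm:CompAttToTwoBalls2:CompsNhood}.
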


\begin{proof}
    Since $\ell(K) \geq R_0(K) + 5K \geq r_1 + 5K$, applying \cref{lem:ComponentAttachingToTwoBalls} to $v_1, v_2, r_1, r'_2 := r_2-\ell(K)-1, K$ and $D$ yields an induced subgraph $Y^1 \subseteq G[D,1]$ and an honest decomposition $(H^1, \cV^1)$ of $Y^1$ modelled on a graph $H^1 \in \cH_{SP}$ with terminals $h_1, h_2$ which satisfies properties \ref{itm:CompAttToTwoBalls:FirstBag} -- \cref{itm:CompAttToTwoBalls:CompsNhood} from \cref{lem:ComponentAttachingToTwoBalls}. 
    
    Set $B_2^{+1} := B_G(v_2, r_2 + 1)$ and
    \[
    \tilde{H} := H^1[\{h \in V(H^1) \mid V^1_h \cap B_2^{+1} \neq \emptyset\}].
    \]
    \noindent We remark that $h_2$ and $N_{H^1}(h_2)$ are contained in $\tilde H$ since $(H^1, \cV^1)$ is honest and \ref{itm:CompAttToTwoBalls:FirstBag}, while $h_1 \notin V(\tilde{H})$ by \cref{itm:CompAttToTwoBalls:FirstBag} and because $v_1$ and $v_2$ are at least $r_2+r_1+2$ apart.
    
    We claim that $\tilde{H}$ is connected. 
    Indeed, since $h_2 \in V(\tilde{H})$, it is enough to find for every $h \in V(\tilde{H})$ some $h$--$h_2$ path in $\tilde{H}$. 
    Let $h \in V(\tilde{H}$) be given, pick some $v \in V^1_h \cap B_2^{+1} \subseteq B_G(D,1)$, and let $Q$ be some $v$--$(N_G(D) \cap B'_2)$ path in $G[B_2^{+1} \cap B_G(D,1)]$, which exists since $B_1$ and $B_2^{+1}$ are disjoint and $D$ is a component of $G-(B_1 \cup B_2')$. Set $Q^1 := Q \cap Y^1$, and note that $h_2$ is a node of $H_{Q^1} := H[\{h \in V(H^1) \mid V^1_h \cap Q^1 \neq \emptyset\}]$ since $V^1_{h_2} = N_G(D) \cap B'_2$ by \ref{itm:CompAttToTwoBalls:FirstBag}. Moreover, $H_{Q^1} \subseteq \tilde{H}$ by definition of $Q$. By \cref{GraphDecomp:H2'}, $H_{\tilde{Q}}$ is connected for every component $\tilde{Q}$ of $Q^1$, and thus the claim follows if $Q^1$ is connected. Otherwise, the claim follows as the neighbourhood of every component of $D - Y^1$ is contained in some bag of $(H^1, \cV^1)$ by \ref{itm:CompAttToTwoBalls:CompsNhood}, and thus $H_{\tilde{Q}}$ and $H_{\hat{Q}}$ intersect for `consecutive' components~$\tilde{Q}$ and~$\hat{Q}$ of $Q^1$, as $Q \subseteq B_G(D,1)$.

    We thus obtain a decomposition $(H, \cV^2)$ of $Y^1$ where $H := H^1/\tilde{H}$ is the graph obtained from contracting $\tilde{H}$ down to a single vertex, which we again call $h_2$, by merging all bags of $(H^1, \cV^1)$ to $V^1_{h_2}$ that contain a vertex of $B_2^{+1}$, i.e.\  $V^2_{h_2} = \bigcup_{h \in \tilde H} V_h^1$ and $V^2_h = V^1_h$ for every node $h \neq h_2$ of $H^2$.  Note that $H$ is still in $\cH_{SP}$ with terminals $h_1,h_2$.
    We now obtain the desired graph-decomposition $(H, \cV)$ by letting $V_h := V^2_h = V^1_h$ for all $h \in V(H) \setminus \{h_1, h_2\}$ and
    $V_{h_i} := V^2_{h_i} \cup B_i$ for $i \in [2]$.
    
    We claim that $(H, \cV)$ is a \gd\ of $Y := G[B_1 \cup V(Y^1) \cup B_2] \subseteq G[B_1 \cup V(D) \cup B_2]$. Indeed, $(H, \cV)$ satisfies \cref{GraphDecomp:H2} as $(H, \cV^2)$ already satisfied \cref{GraphDecomp:H2}, because, by construction, $V^2_{h_2}$ is the only bag of $(H, \cV^2)$ that may contain vertices of $B_2$, and because $B_1 \setminus V^1_{h_1}$ does not meet $Y^2$ and $B_2$ as $Y^1 \subseteq G[D,1]$ and because $B_1$ and $B_2$ are disjoint.
    
    To see that \cref{GraphDecomp:H1} holds, observe that, since $(H, \cV^2)$ is a \gd\ of $Y^1$, and $V_{h_2} \supseteq B_2$ and $V_{h_1} = B_1$, it suffices to show that the parts of $(H, \cV)$ cover all edges between $Y^1 \cap D$ and $B_1 \cup B_2$ as well as all edges between $B_1$ and $B_2$. 
    Since $N_G(D) \cap B_1 \subseteq V^2_{h_1}$ and $(H, \cV^2)$ satisfies \cref{GraphDecomp:H1}, the parts of $(H, \cV^2)$ already covered all edges between $B_1$ and $Y^1 \cap D$, and so the parts of $(H, \cV)$ do so as well.
    Further, the parts of $(H, \cV)$ cover all edges between $B_2$ and $Y^1 \cap D$ since $V(Y^1) \cap N_G(B_2) \subseteq V(Y^1) \cap B_2^{+1} \subseteq V_{h_2}$ as ensured by the contraction of $\tilde H$ in the construction of $(H^2, \cV^2)$.
    Finally, there are no edges between $B_1$ and $B_2$ because $\dist_G(v_1, v_2) \geq r_1 + 2 + r_2$.

    We now check that $(H, \cV)$ satisfies \cref{itm:CompAttToTwoBalls2:FirstBag} -- \cref{itm:CompAttToTwoBalls2:CompsNhood}.
    By definition, $V_{h_2} = V_{h_2}^2 \cup B_2 \supseteq B_2$ and by \cref{itm:CompAttToTwoBalls:FirstBag} $V_{h_1} = V_{h_1}^2 \cup B_1 = B_1$, and hence $(H, \cV)$ satisfies \ref{itm:CompAttToTwoBalls2:FirstBag}. 
    To see that $(H,\cV)$ satisfies \ref{itm:CompAttToTwoBalls2:Rad1}, let us first note that  $\rad(V_{h_1}) =\rad_G(B_1) \leq r_1 \leq R_0(K)$.
    
    So by \ref{itm:CompAttToTwoBalls:Rad1} and \ref{itm:CompAttToTwoBalls:Rad2}, we have $\rad_G(V_h) = \rad_G(V^1_h) \leq \max\{r_1 + 2R_0(K)+5K+1, R'_0(K)\} \leq R'_0(K)$ for all nodes $h \neq h_1$ of $V(H^1)$ with $h \notin B_{H^1}(h_2,1)$.
    So since $\tilde H$ contains $N_{H^1}(h_2)$ and $\tilde H$ was contracted to $h_2$ in the construction of $H$, this yields \ref{itm:CompAttToTwoBalls2:Rad1}.
    
    Let us now verify that $(H, \cV)$ satisfies \cref{itm:CompAttToTwoBalls2:RadiusWidth}. 
    For this, let $v \in V_{h_2}$. If $v \in B_2$, then $\dist_G(v, v_2) \leq r_2 \leq r_2 + 2 R_0'(K)+1$ as desired. Otherwise, by construction, $v$ is contained in $V^1_h$ for some node $h$ of $\tilde H \subseteq H^1$. 
    If $h \in B_{H^1}(h_2,1)$, then $\dist_G(v, v_2) \leq r'_2 + 2R_0(K)+5K+1$ by \ref{itm:CompAttToTwoBalls:Rad2}. Otherwise, by \ref{itm:CompAttToTwoBalls:Rad1}, we have $\dist_G(v_2, v) \leq r_2 + 1 + 2\rad_G(V^1_h) \leq r_2 + 1 + 2R'_0(K)$ because $V^1_h$ meets $B_2^{+1} = B_G(B_2, 1)$ as $h \in V(\tilde H)$. Thus, $(H, \cV)$ satisfies \cref{itm:CompAttToTwoBalls2:RadiusWidth}. Further, \cref{itm:CompAttToTwoBalls2:RadiusSpread} holds for $(H,\cV)$ by \cref{itm:CompAttToTwoBalls:RadialSpread}.
    
    We are thus left to check \ref{itm:CompAttToTwoBalls2:CompsNhood}. For this, let $D'$ be a component of $G - Y$ that meets $C$ and $D$; in particular, since $N_G(D) \cap B_1 \subseteq V(Y^1)$, we have that $D'$ is a component of $G - (V(Y^1) \cup B_2)$. Let $\hat{D}$ be the component of $D - Y^1$ that contains $D'$. Since $D' \cap C \neq \emptyset$, there is a $\hat{D}$--$B_1$ path $Q$ in $C$; let $q$ be its first vertex in $Y^1$; in particular, $q \in N_G(\hat D)$. Then 
    \[
    \dist_G(q, v_2) \geq \dist_G(C, v_2) > r_2 \geq r'_2 + \ell(K) = r'_2 + 2\cdot R_0(K) + 5K + 2,
    \]
    which by \ref{itm:CompAttToTwoBalls:Rad2} implies that $q \notin V^1_h$ for every $h \in B_{H^1}(h_2, 1)$. By \ref{itm:CompAttToTwoBalls:CompsNhood}, there is a bag $V^1_{h}$ of $(H^1, \cV^1)$ such that $q \in N_G(\hat{D}) \subseteq V^1_h$; by the previous observation, we have $h \notin B_{H^1}(h_2, 1)$, and hence $\rad_G(V^1_h) \leq R'_0(K)$ as we have shown in the proof of \ref{itm:CompAttToTwoBalls2:Rad1}.
    
    Now observe that
    \begin{equation} \label{eq:CompAttToTwoBalls2:1}
    N_G(D') = (N_G(D') \cap V(Y^1)) \cup (N_G(D') \cap B_2) \subseteq N_G(\hat{D}) \cup (N_G(D') \cap B_2)
    \end{equation}
    where we used that $D' \subseteq \hat{D} - B_2$.
    Since $D' \subseteq \hat D$, $N_G(\hat{D}) \subseteq V^1_h$ and $v_2 \notin V(\hat{D})$, every $D'$--$v_2$ path meets $V^1_h$.
    Thus, every vertex in $N_G(D') \cap B_2$ (which is empty if $h \notin \tilde H$) has distance at most $\ell(K)$ to $V^1_h$.
    Since also $N_G(\hat{D}) \subseteq V_h^1$, we have by \cref{eq:CompAttToTwoBalls2:1} for all $u, w \in N_G(D')$ that
    \[
    \dist_G(u, w) \leq \dist_G(u, V^1_h) + 2\cdot \rad_G(V^1_h) + \dist_G(V^1_h, w) \leq \ell(K) + 2 \cdot R'_0(K) + \ell(K) \leq R_2(K) - 1.
    \]
    This shows that $(H, \cV)$ satisfies \ref{itm:CompAttToTwoBalls2:CompsNhood} and thus concludes the proof.
\end{proof}

\begin{proof}[Proof of \cref{lem:CompWithThreeVertices}]
    Suppose that $G$ has no $K$-fat $K_4$ minor for $K \in \N_{\geq 1}$; otherwise, we are done.
    Since $4\cdot (2 (\ell(K) + 22K+1) + 11K + 2) = R_1(K)$, by \cref{lem:BallAndThreeComps} applied to $B_G(w, r + 22K+1)$ and $C^*$ (with $\ell := \ell(K) + 22K +1$) there exists a vertex $w' \in V(C^*)$ such that for every component $D$ of $G - (B_1 \cup B'_2)$ that attaches to $B_1 := B_G(w', 22K)$ and $B'_2 := B_G(w, r - \ell(K))$ there exists a $B_1$--$B'_2$ path~$P$ such that $\dist_G(G[D,1], P) \geq 5K$.
    Let $\cD$ be the set of components $D$ of $G - (B_1 \cup B'_2)$ which meet $C$ and attach to $B'_2$.
    We note that these also attach to $B_1$, as $w' \in V(C^*) \subseteq V(C)$ and $C$ is connected.
    Set $B_2 := B$.

    Note that $r > \ell(K)$, since there are two vertices in $\partial_G C^*$ which are at least $R_1(K) > 2(\ell(K)+22K + 1)$ apart.
    We may thus apply \cref{lem:CompAttToTwoBalls2} to $v_1 := w'$, $v_2 := w$, $r_1 := 22K$, $r_2 := r$ and every component $D \in \cD$. This yields for every such $D$ an honest partial \gd\ $\cH^{D} = (H^{D}, \cV^{D})$ of $G$ with support $Y^{D} \subseteq G[B_1 \cup V(D) \cup B_2]$ modelled on a graph $H^{D} \in \cH_{SP}$ with terminals $h_1^D, h_2^D$ which satisfies \cref{itm:CompAttToTwoBalls2:FirstBag} to \cref{itm:CompAttToTwoBalls2:CompsNhood}.

    We define the desired decomposition $(H, \cV)$ in multiple steps and check its desired properties afterwards. First, we define a decomposition $(H^1, \cV^1)$ of $Y^1 := G[\bigcup_{D \in \cD} V(Y^{D})] \subseteq G[B_1 \cup (\bigcup_{D \in \cD} V(D)) \cup B_2] \subseteq G[V(C) \cup B_2]$. The graph $H^1$ is obtained from the disjoint union of the $H^{D}$ and three new vertices $h_1, h_2,g$ by identifying all $h^{D}_1$ with $h_1$ as well as all $h^{D}_2$ with $h_2$ and adding the edge $h_2g$.
    Then we assign the nodes of $H^1$ bags $V_g^1 := N_G(C)$, $V^1_{h_2} := \bigcup_{D \in \cD} V^D_{h^D_2} \supseteq B_2$, and $V^1_{h_1} := \bigcup_{D \in \cD} V^D_{h^D_1} = B_1$ and $V^1_h := V^{D}_h$ for all other $h \in V(H^1)$ where $D$ is the unique component in $\cD$ with $h \in V(H^{D})$.
    Then $(H^1, \cV^1)$ is a  \gd\ of  $Y^1$.
    Indeed, it follows from $G[B_1 \cup B_2] \subseteq Y^D \subseteq G[B_1 \cup V(D) \cup B_2]$ for $D \in \cD$ that the $Y^D$ intersect only in $B_1$ and $B_2$.
    Hence, $(H^1, \cV^1)$ is a \gd\ of $Y^1$, as every $(H^D,\cV^D)$ is a \gd\ of $Y^D$ and $B_1 = V^D_{h^D_1}$ and $B_2 \subseteq V^D_{h^D_2}$.

    Next, we adjust $(H^1,\cV^1)$ to a  \gd\ $(H,\cV^2)$ of $Y^2 \supseteq Y^1$:
    Let $C'$ be a component of $G-Y^1$ which meets $C$.
    In particular, $C'$ is contained in $G-(B_1 \cup B_2)$ because $B_1, B_2 \subseteq V(Y^1)$.
    Let $D_{C'}$ be the unique component of $G- (B_1 \cup B'_2)$ which contains $C'$.
    By $\cC$ we denote the set of all components $C'$ of $G-Y^1$ which meet $C$ and whose $D_{C'}$ attach to $B'_2$.
    In particular, for every $C' \in \cC$ we have that $D_{C'} \in \cD$ and $C'$ is a component of $G-Y^{D_{C'}}$ that meets $C$ and $D_{C'}$, since $Y^{D_{C'}} \subseteq Y^1$.
    Hence, it follows from \cref{itm:CompAttToTwoBalls2:CompsNhood} of $(H^{D_{C'}}, \cV^{D_{C'}})$ that there is a node $h_{C'} \in V(H^{D_{C'}})$ such that $N_G(C') \subseteq V^{D_{C'}}_{h_{C'}}$ and $\rad_G(N_G(C')) \leq R_2(K)-1$.
    
    Now we obtain $H$ from $H^1$ by adding for each component $C' \in \cC$ a new node $h_{C'}'$ and the edge $h_{C'}h_{C'}'$.
    We assign an $h_{C'}'$ the bag $V^2_{h_{C'}'} := N_G(C') \cup \partial_G C'$ for $C' \in \cC$.
    Moreover, we set $V^2_h := V^1_h$ for all old nodes $h$ of $H^1 \subseteq H$.
    Then it is immediate from the construction that $(H,\cV^2)$ is a  \gd\ of  $Y^2 := G[V(Y^1) \cup (\bigcup_{C' \in \cC} \partial_G(C'))]$.
    We now obtain our final  \gd\ $(H, \cV)$ of  $Y := Y^2 \cap G[C,1]$ as the restriction of $(H,\cV^2)$ to $Y$.
    \smallskip
    
    It remains to check that $(H,\cV)$ has the desired properties.
    By definition, $Y \subseteq G[C,1]$.
    We also have $\partial_G C \subseteq V(Y)$. Indeed, every vertex $v \in \partial_G C \subseteq N_G(B_2)$ which was not already contained in $V(Y^1) \supseteq B_2$
    lies in $\partial_G C'$ for some component $C'$ of $G-Y^1$. 
    Then $C' \in \cC$, i.e.\ the component $D_{C'}$ of $G-(B_1 \cup B'_2)$ attaches to $B'_2$: because $v \in V(C') \subseteq V(D_{C'})$ and $B_2$ and $B_1$ are disjoint, every shortest $v$--$N_G(B'_2)$ path in~$G$ is through $B_2 \setminus B_2'$ and hence contained in $D_{C'}$. Now by construction, $v \in V_{h'_{C'}}$.
    Moreover, the graph $H$ is in $\Forbminor(K_4)$: The graph $H^1$ is obtained from the $H^D \in \cH_{SP}$ by parallel compositions of the graphs $H^{D} \in \cH_{SP}$ followed by a series composition with a path $h_2g$ with terminals $h_2$ and $g$; so $H^1$ is again in $\cH_{SP}$ with terminals $h_1$ and $g$ by \cref{obs:ConstrOfSPGraphs}. Now the graph $H$ is given by $1$-sums of the single edges $h_{C'}h_{C'}'$ with the graph $H^1 \in \cH_{SP} \subseteq \Forbminor(K_4)$. 
    Hence, $H \in \Forbminor(K_4)$.
    
    Since $V_g = V^2_g = V^1_g = N_G(C)$ by construction, $(H,\cV)$ satisfies \cref{itm:CompWithThreeVertices:FirstBag}.
    For \cref{itm:CompWithThreeVertices:Width}, we note that by \cref{itm:CompAttToTwoBalls2:Rad1} and \cref{itm:CompAttToTwoBalls2:RadiusWidth} of the $(H^D, \cV^D)$ we have $\orw((H^1, \cV^1)) \leq r + 2R'_0(K) + 1$. 
    For nodes $h \in H_2$ that are already in $H_1$, we have $V^1_h = V^2_h$, and hence $\rad_G(V^2_{h}) \leq \rad_G(V^1_{h})$.
    By construction of $(H^2,\cV^2)$, every node in $h \in H_2 - H_1$ is of the form $h'_{C'}$ for some $C' \in \cC'$, so their bags $V^2_{h'_{C'}}$ are contained in the respective $V^1_{h_{C'}} \cup N_G(V^1_{h_{C'}})$, and hence $\rad_G(V^2_{h'_{C'}}) \leq \rad_G(V^1_{h_{C'}}) + 1$.
    Thus, as $\rad_G(V_{h'_{C'}}) = \rad_G(V_{h'_{C'}}^2) \leq R_2(K)$, the \gd\ $(H, \cV)$ satisfies \cref{itm:CompWithThreeVertices:Width}.
    Also $(H, \cV)$ satisfies \cref{itm:CompWithThreeVertices:Spread}: The $(H^D, \cV^D)$ have radial spread at most $3$. Thus, $(H^1,\cV^1)$ has radial spread at most $6$ by construction.
    This yields that $(H, \cV^2)$ has radial spread at most $7$ which yields that its restriction $(H, \cV)$ also has radial spread at most $7$.

    We claim that $(H, \cV)$ satisfies \cref{itm:CompWithThreeVertices:CompsNhood}.
    Indeed, let $\tilde{C}$ be a component of $G-Y$.
    First, assume that $\tilde{C}$ does not meet $C$.
    Since $N_G(C) \subseteq V(Y) \subseteq B_G(C,1)$, its neighbourhood $N_G(\tilde{C})$ is contained in $N_G(C) = V_g$.
    Also, we have $\rad_G(N_G(C)) \leq \rad_G(B) \leq r$, since $C$ is a component of $G-B$ and $B = B_G(w,r)$. 
    So we may assume from now on that $\tilde{C}$ meets $C$.
    Since $N_G(C), B_1 \subseteq V(Y)$, the component $\tilde{C}$ is contained in $G- (B_1 \cup B_2)$, and thus in $C$.
    As $Y = Y^2 \cap G[C,1]$, $Y^2 \supseteq Y^1$ and $\tilde{C} \subseteq C$, there is a unique component $C'$ of $G - Y^1$ which contains $\tilde{C}$.
    If $D_{C'}$ does not attach to $B_2'$, its neighbourhood $N_G(D_{C'})$ is contained in $B_1 = V^1_{h_1} = V^2_{h_1} = V_{h_1}$, and thus $D_{C'} = C' = \tilde{C}$ by their respective definition.
    So we may assume that $D_{C'}$ attaches to $B_2'$, i.e.\ $C' \in \cC$.
    Then the construction of $(H, \cV^2)$ ensured that $V_{h_{C'}'} = V^2_{h_{C'}'} = N_G(C') \cup \partial_G C'$.
    Also $\tilde{C}$ is a component of $C' - \partial_G C'$, since $Y^2 = G[V(Y^1) \cup \bigcup_{C' \in \cC} \partial_G C']$.
    Thus, we have $N_G(\tilde{C}) \subseteq \partial_G C' \subseteq V^2_{h_{C'}'} = V_{h_{C'}'}$.
    We have already seen above that $\rad_G(V_{h_{C'}'}) \leq R_2(K)$.
\end{proof}

\section*{Acknowledgements}

The second named author gratefully acknowledges support by doctoral scholarships of the Studienstiftung des deutschen Volkes and the Cusanuswerk -- Bisch\"{o}fliche Studienf\"{o}rderung.
The third named author gratefully acknowledges support by a doctoral scholarship of the Studienstiftung des deutschen Volkes.

\bibliographystyle{amsplain}
\arXivOrNot{\bibliography{collectivearXiv.bib}}{\bibliography{collective.bib}}

\end{document}